\numberwithin{equation}{section}
\newcommand{\thmref}[1]{Theorem~\ref{#1}}
\newcommand{\secref}[1]{Section~\ref{#1}}
\newcommand{\lemref}[1]{Lemma~\ref{#1}}
\newcommand{\propref}[1]{Proposition~\ref{#1}}
\newcommand{\corref}[1]{Corollary~\ref{#1}}
\newcommand{\defref}[1]{Definition~\ref{#1}}
\newcommand{\exref}[1]{Example~\ref{#1}}
\newcommand{\figref}[1]{Figure~\ref{#1}}
\newcommand{\qbinom}[2]{\genfrac{[}{]}{0pt}{}{#1}{#2}}
\newtheorem{theorem}{Theorem}[section]
\newtheorem{lemma}[theorem]{Lemma}
\newtheorem{proposition}[theorem]{Proposition}
\newtheorem{corollary}[theorem]{Corollary}
\theoremstyle{definition}
\newtheorem{example}[theorem]{Example}
\newtheorem{definition}[theorem]{Definition}
\theoremstyle{remark}
\newtheorem{remark}[theorem]{Remark}
\newcommand{\RN}[1]{%
  \textup{\uppercase\expandafter{\romannumeral#1}}%
}
\begin{document}

\title[Explicit generators and relations of the centre]{Explicit generators and relations for \\ the centre of the quantum group}
\author[Y Dai]{Yanmin Dai}
\address[Y Dai]{School of Mathematical Sciences, University of Science and Technology of China, Heifei, 230026, China}
\email{bt2@mail.ustc.edu.cn}

\author[Y Zhang]{Yang Zhang}
\address[Y  Zhang]{School of Mathematics and Statistics, University of Sydney, NSW 2006, Australia}
\email{yang91@mail.ustc.edu.cn}

\keywords{quantum groups, central elements, Harish-Chandra isomorphism}
\subjclass[2010]{17B37, 81R50}
%81R50 -- Quantum groups and related algebraic methods
%17B37 -- Quantum groups (quantized universal enveloping algebras) and related deformations
%17B65 -- Infinite-dimensional Lie (super)algebras
%16T25 -- Yang-Baxter equations
%16G20 -- Representations of quivers and partially ordered sets
%17B10 -- Representations, algebraic theory (weights)

\begin{abstract}
For the standard Drinfeld-Jimbo quantum group  ${\rm U}_q(\mathfrak{g})$ associated with a simple Lie algebra $\mathfrak{g}$, we construct explicit generators of the centre $Z({\rm U}_q(\mathfrak{g}))$, and determine the relations satisfied by the generators.  For $\mathfrak{g}$ of type $A_n(n\geq 2)$, $D_{2k+1}(k\geq 2)$ or $E_6$,  the centre $Z({\rm U}_q(\mathfrak{g}))$ is isomorphic to a quotient of a polynomial algebra in multiple variables, which is described in a uniform manner for all cases.  For $\mathfrak{g}$ of any other type, $Z({\rm U}_q(\mathfrak{g}))$ is generated by $n=$rank$(\mathfrak{g})$ algebraically independent elements.
\end{abstract}
\maketitle

\section{Introduction}

%Let $\mathfrak{g}$ be a finite dimensional simple complex Lie algebra, and $\mathfrak{h}\subseteq \mathfrak{g}$ be its Cartan subalgebra. A theorem of Harish-Chandra says that there exists an isomorphism between the centre $Z({\rm U}(\mathfrak{g}))$ of the universal enveloping algebra ${\rm U}(\mathfrak{g})$ and the invariant subalgebra $S(\mathfrak{h})^W$ \cite{Hum72}, where $S(\mathfrak{h})$ is the symmetric algebra of $\mathfrak{h}$ furnished with the action of the Weyl group $W$ of $\mathfrak{g}$. It follows from classical invariant theory that $S(\mathfrak{h})^W$ is a polynomial algebra \cite{Che55}, and hence so is the centre $Z({\rm U}(\mathfrak{g}))$.
 Let $\mathfrak{g}$ be a finite dimensional simple complex Lie algebra of rank $n$. In the literature \cite{Tan92,Jan96}, there are two different versions of the Drinfeld-Jimbo quantum group \cite{Dri86, Jim85}, which are denoted by ${\rm \overline{U}}_q(\mathfrak{g})$ and ${\rm U}_q(\mathfrak{g})$ respectively with $q$ being an indeterminate. The former contains among generators $K_{\lambda}$ with $\lambda$ in the weight lattice $P$ of $\mathfrak{g}$, while the latter contains those $K_{\alpha}$ with $\alpha$ in the root lattice $Q$ of $\mathfrak{g}$. We will focus on the latter  quantum group ${\rm U}_q(\mathfrak{g})$ (see \secref{sec: def}) and study the structure of its centre.  
%of which the centre is not necessarily a polynomial algebra as in the classical case. 

Drinfeld \cite{Dri90} and Reshetikhin \cite{Res90} constructed explicitly a natural isomorphism from  the representation ring to the centre of the quantum group ${\rm \overline{U}}_q(\mathfrak{g})$. Their method exploits the quasi-triangular structure of ${\rm \overline{U}}_q(\mathfrak{g})$ and can be generalised to the quantum affine algebras \cite{Eti95}. It turns out that  the centre $Z({\rm \overline{U}}_q(\mathfrak{g}))$ is a polynomial algebra generated by $n$ algebraically independent central elements associated to certain representations.  Algebraically independent explicit generators of $Z({\rm \overline{U}}_q(\mathfrak{g}))$ have been constructed in \cite{Dai}.

In contrast to the case of  ${\rm \overline{U}}_q(\mathfrak{g})$, the centre $Z({\rm U}_q(\mathfrak{g}))$ of  the  quantum group ${\rm U}_q(\mathfrak{g})$ is  not necessarily a polynomial algebra, and much remains to be understood about its algebraic structure.  

A fundamental problem, analogous to the first and second fundamental theorems of classical invariant theory,  is to describe explicit generators of $Z({\rm U}_q(\mathfrak{g}))$ and the relations which they obey. The generators of $Z({\rm U}_q(\mathfrak{g}))$ are elements of ${\rm U}_q(\mathfrak{g})$ which commute with all elements of ${\rm U}_q(\mathfrak{g})$.  They are usually referred to as quantum Casimir operators,  and play important roles in studying symmetries of physical systems.  

We give a complete solution of this problem for all $\mathfrak{g}$ in Theorem \ref{thm: cen}.

Now we briefly describe the key ingredients used in the proof of Theorem \ref{thm: cen}. We point out here that   \cite{LXZ16} proved to be very useful for our study, and will make comments  later on results of {\em op. cit.}. 

Given any finite dimensional ${\rm U}_q(\mathfrak{g})$-module $V$ of type-{\bf 1} with some conditions on the weights, we employ the quasi $R$-matrix of ${\rm U}_q(\mathfrak{g})$ (see e.g., \cite[\S8.3.3]{KS97} for an explicit formula)
to construct an infinite set of explicit central elements $C^{(k)}_{V}$ for $k=1, 2, \dots$ in Definition \ref{def: Gamma} by following a method developed in \cite{ZGB91a,ZGB91b}. Our main theorem (i.e., Theorem \ref{thm: cen}) states that there exists a finite set $\Sigma$ of  ${\rm U}_q(\mathfrak{g})$-modules such that $C_V=C^{(1)}_{V}$ for $V\in \Sigma$ generate the centre $Z({\rm U}_q(\mathfrak{g}))$.

We determine the set $\Sigma$ and obtain the relations satisfied by the generators by making essential use of the quantised Harish-Chandra isomorphism of ${\rm U}_q(\mathfrak{g})$, which is an isomorphism from the centre $Z({\rm U}_q(\mathfrak{g}))$ to the Weyl group $W$ invariant subalgebra $({\rm U}_{{\rm ev}}^0)^W$ \cite{Jan96}, where  ${\rm U}_{{\rm ev}}^0$ is spanned by the even elements $K_{2\lambda}$ for $\lambda\in M:=\frac{1}{2}Q\cap P$ with $\frac{1}{2}Q:=\{\frac{1}{2}\alpha\mid \alpha\in Q\}$  the half root lattice. In particular, we require a specific representation-theoretical description of the isomorphism. 
For this, we consider the Grothendieck algebra $S({\rm U}_q(\mathfrak{g}))$ of the category of finite dimensional ${\rm U}_q(\mathfrak{g})$-modules whose weights are contained in $M$. Then the quantised Harish-Chandra isomorphism leads to an isomorphism from $S({\rm U}_q(\mathfrak{g}))$ to $Z({\rm U}_q(\mathfrak{g}))$, sending each isomorphism class $[V]$ to $C_{V}$.

To gain a conceptual understanding of the Grothendieck algebra $S({\rm U}_q(\mathfrak{g}))$, we bring the monoid algebra $\mathbb{C}[M^+]$ into the picture \cite{LXZ16}, where $M^+:=\frac{1}{2}Q\cap P^+$  denotes the additive monoid consisting of dominant weights in the half root lattice $\frac{1}{2}Q$. We describe the Hilbert basis ${\rm Hilb}(M^+)$, a minimal generating set of $M^+$, and then split the simple Lie algebras into two types (see \eqref{eq: type}). In the case of type $\RN{1}$, the set ${\rm Hilb}(M^+)$ comprises exactly all fundamental weights of $\mathfrak{g}$ by straightforward 
calculation, and hence the associated monoid algebra $\mathbb{C}[M^+]$ is a polynomial algebra. In the case of type $\RN{2}$, where $\mathfrak{g}$ is of $A_n(n\geq 2)$, $D_{2k+1}(k\geq 2)$ or $E_6$, the automorphism of the corresponding Dynkin diagram (see \figref{fig: involution}) induces an involution of the monoid $M^+$, which permits us to describe generators and relations of the monoid algebra $\mathbb{C}[M^+]$ in a unified way.

We prove that there is a natural isomorphism between $S({\rm U}_q(\mathfrak{g}))$ and the monoid algebra $\mathbb{C}(q)[M^+]:=\mathbb{C}(q)\otimes_{\mathbb{C}} \mathbb{C}[M^+]$ over  the field $\mathbb{C}(q)$ of rational functions,  and therefore obtain $Z({\rm U}_q(\mathfrak{g}))\cong S({\rm U}_q(\mathfrak{g}))\cong   \mathbb{C}(q)[M^+]$. By means of these isomorphisms, for each generator of $\mathbb{C}(q)[M^+]$ we construct an explicit generator $C_{T}$ of $Z({\rm U}_q(\mathfrak{g}))$ associated to a certain tensor product $T$ of fundamental representations of $\mathfrak{g}$. Using the presentation of $\mathbb{C}[M^+]$, we determine relations among these generators $C_{T}$.

%Our main result is that the centre  $Z({\rm U}_q(\mathfrak{g}))$ is isomorphic to the monoid algebra $\mathbb{C}(q)[M^+]:=, where  

%The proof of the isomorphism between $Z({\rm U}_q(\mathfrak{g}))$ and $\mathbb{C}(q)[M^+]$
%Note that the image of $C_V$ under the Harish-Chandra isomorphism is equal to the character $\chi(V)=\sum_{\mu}{\rm dim}V_{\mu}\, e^{\mu}$ of $V$ with the formal $e^{\mu}$ replaced with $K_{2\mu}$.

 %On the other hand,  
 
We must point out that the isomorphism $Z({\rm U}_q(\mathfrak{g}))\cong \mathbb{C}(q)[M^+]$ and a presentation of the monoid algebra $\mathbb{C}(q)[M^+]$ were previously obtained in \cite{LXZ16} by a case by case study. Here we have developed a new method for deriving these results, which is conceptual and uniform.  Also the presentation of  $\mathbb{C}(q)[M^+]$ given in \cite{LXZ16} has different (but equivalent) relations from ours  despite the fact that the generating set is the same. 

We should emphasise the difference between the present paper and \cite{LXZ16}.
While we have given explicit generators and relations of the centre $Z({\rm U}_q(\mathfrak{g}))$, the authors of \cite{LXZ16} gave a presentation for the isomorphic algebra $\mathbb{C}(q)[M^+]$ instead. Their results, while being interesting in their own right, 
do not help in constructing explicit generators of $Z({\rm U}_q(\mathfrak{g}))$, which is one of our main concerns in this paper. 

We note that the eigenvalues of higher order central elements $C_{V}^{(k)}$ are computed explicitly in \cite{LZ93,DGL05} for quantum supergroups ${\rm U}_q(\mathfrak{gl}_{m|n})$ and ${\rm U}_q(\mathfrak{osp}_{m|2n})$, where $V$ is the natural representation. With the eigenvalue formula, it is shown in \cite{Li10} that the centre of ${\rm U}_q(\mathfrak{gl}_n)$ is generated by $C_{V}^{(k)}$ for $1\leq k\leq n$. In a sequel to this paper, we will prove an analogue of this result for quantum groups of types $B$, $C$ and $D$.

 %In \secref{sec: cencons}, by using the quasi $R$-matrix of ${\rm U}_q(\mathfrak{g})$ we construct an explicit central element $C_V$ from any finite dimensional ${\rm U}_q(\mathfrak{g})$-module $V$ whose weights are contained in $M$.
 % and hence obtain an explicit description of $Z({\rm U}_q(\mathfrak{g}))$ in terms generators and relations in \thmref{thm: cen}.
%we recall the definition of quantum groups and the quantised Harish-Chandra isomorphism

This paper is organised as follows. In \secref{sec: cencons} we construct an explicit central element $C_V$ from any finite dimensional ${\rm U}_q(\mathfrak{g})$-module $V$ whose weights are contained in $M$, and then state our main theorem  (\thmref{thm: cen}), which will be proved in later sections. In \secref{sec: monoid} we  describe the Hilbert basis ${\rm Hilb}(M^+)$, and give a presentation of the monoid algebra $\mathbb{C}[M^+]$ in  \thmref{thm: monoalg1} and \thmref{thm: monoalg2}, which correspond to the Lie algebras of type $\RN{1}$ and $\RN{2}$ respectively. In \secref{sec: cenalg} we prove our main theorem by showing that the centre $Z({\rm U}_q(\mathfrak{g}))$ is isomorphic to the monoid algebra $\mathbb{C}(q)[M^+]$ from a representation theoretical point of view.

\vspace{0.3cm}
 \noindent
{{\bf Acknowledgements.}} We would like  to thank Professor Ruibin Zhang for advices and help during the course of this work.

%\section{Preliminaries}\label{sec: pre}

\section{Construction of central elements}\label{sec: cencons}

 We first recall the definition of quantum groups.  Given any finite dimensional  ${\rm U}_q(\mathfrak{g})$-module $V$ whose weights are contained in $M$, we construct explicitly  a central element $C_{V}$  by using the quasi $R$-matrix of ${\rm U}_q(\mathfrak{g})$.  Finally, we state the main theorem of this paper.

\subsection{Quantum groups}\label{sec: def}

Let $\mathfrak{g}$ be a finite dimensional simple Lie algebra of rank $n$ over the complex field $\mathbb{C}$. Let $\mathfrak{h}$ be the Cartan subalgebra of $\mathfrak{g}$, and let $\Phi\subset \mathfrak{h}^{\ast}$ be the set of roots. Fix a set $\Phi^+$ of positive roots, and denote by $\Pi=\{\alpha_1,\dots, \alpha_n \}\subseteq \Phi^+$ the set of corresponding simple roots. 

Let $(-,-)$ be a non-degenerate invariant symmetric bilinear form on $\mathfrak{h}^{\ast}$. The Cartan matrix $A=(a_{ij})$ is the $n\times n$ matrix with $a_{ij}=2(\alpha_i,\alpha_j)/(\alpha_i,\alpha_i)$. 
The fundamental weights $\varpi_i$ of $\mathfrak{g}$ are defined by $2(\varpi_i, \alpha_j)/(\alpha_j,\alpha_j)=\delta_{ij}$ for $1\leq i,j\leq n$. We define 
\[
 P=\bigoplus_{i=1}^{n} \mathbb{Z}\varpi_i, \quad Q= \bigoplus_{i=1}^n \mathbb{Z}\alpha_i
\]
to be the weight lattice and root lattice, respectively. Let $P^+ \subset P$  be the set of dominant weights, i.e., weights that are non-negative integer combinations of $\varpi_i$. 
%Similarly, we introduce $Q^+$ . 

Throughout, let $q$ be an indeterminate and  $\mathbb{C}(q)$ the field of rational functions.  The quantum group ${\rm U}_q(\mathfrak{g})$ \cite{Jan96}  is the unital associative algebra over $\mathbb{C}(q)$ generated by $E_{i},F_{i}$ and  $K_i:=K_{\alpha_i}$ for $1\leq i\leq n$,
 subject to the following relations:
 %%\begin{equation*}
\begin{align*}
K_iK_i^{-1}=1=K_i^{-1}K_i,&\quad K_iK_j=K_jK_i\\
K_iE_{j}K_i^{-1}=&q^{(\alpha_i,\alpha_j)}E_{j}, \\
K_{i}F_{j}K_{i}^{-1}=&q^{-(\alpha_i,\alpha_j)}F_{j}, \\
E_{i}F_{j}-F_{j}E_{i}=&\delta_{ij}\frac{K_{i}-K_{i}^{-1}}{q_i-q_i^{-1}}, \\
\sum\limits_{s=0}^{1-a_{ij}}(-1)^s \qbinom{1-a_{ij}}{s}_{q_i} &
E_{i}^{1-a_{ij}-s}E_{j}E_{i}^{s}=0,\ i\neq j, \\
\sum\limits_{s=0}^{1-a_{ij}}(-1)^s \qbinom{1-a_{ij}}{s}_{q_i}&
F_{i}^{1-a_{ij}-s}F_{j}F_{i}^{s}=0,\ i\neq j,
\end{align*}
%%\end{equation*}
where $q_i=q^{(\alpha_i,\alpha_i)/2}$,  and for any $m\in\mathbb{N}$
\begin{equation*}\label{eq: qbinomial}
  [m]_{q_i}=\frac{q_i^{m}-q_i^{-m}}{q_i-q_i^{-1}},
  \quad[m]_{q_i}!=[1]_{q_i}[2]_{q_i}\cdots [m]_{q_i},
  \quad \qbinom{m}{k}_{q_i}=\frac{[m]_{q_i}!}{[m-k]_{q_i}![k]_{q_i}!}.
  \end{equation*}

It is well known that ${\rm U}_q(\mathfrak{g})$ is a Hopf algebra with  co-multiplication $\Delta$, co-unit $\varepsilon$ and antipode $S$ given by
\begin{eqnarray*}
&\Delta(K_i)=K_i{\otimes}K_i, \quad
\Delta(E_{i})=K_{i}{\otimes}E_{i}{+}E_{i}{\otimes}1, \quad
\Delta(F_{i})=F_{i}{\otimes}K_{i}^{-1}{+}1{\otimes}F_i,\\
&\varepsilon (K_i)=1,\quad \varepsilon (E_{i})=0,
\quad \varepsilon({F_{i}})=0,\\
&S(K_i)=K_i^{-1},\quad S(E_{i})={-}K_{i}^{{-}1}E_{i},\quad S(F_{i})={-}F_{i}K_{i}.
\end{eqnarray*}

Write ${\rm U}={\rm U}_q(\mathfrak{g})$. The quantum group  is graded by the root lattice $Q$, i.e., ${\rm U}=\bigoplus_{\nu\in Q} {\rm U}_{\nu}$ with
\begin{equation*}
   {\rm U}_{\nu}=\{u\in{\rm U}\mid  K_{i}uK_i^{-1}=q^{(\nu,\alpha_i)}u, \, \forall i=1, \dots, n\}.
\end{equation*}
Define ${\rm U}^+$ (resp. ${\rm U}^-$) to be the subalgebra generated by all $E_i$ (resp. $F_i$), and introduce ${\rm U}_{\nu}^+={\rm U}_{\nu}\cap {\rm U}^+$ (resp. ${\rm U}_{-\nu}^-={\rm U}_{-\nu}\cap {\rm U}^-$).

The representation theory of ${\rm U}_q(\mathfrak{g})$ is parallel to that of the Lie algebra $\mathfrak{g}$ \cite{Hum72,Jan96}. Throughout, we are concerned with finite dimensional ${\rm U}_q(\mathfrak{g})$-modules of type $1$. Each  ${\rm U}_q(\mathfrak{g})$-module $V$ admits the weight space decomposition $V=\bigoplus_{ \mu\in \Pi(V)} V_{\mu}$, where $V_{\mu}$ is the  weight space of $V$ and  $ \Pi(V)\subset P$ is the set of weights. Define $m_{V}(\mu):={\rm dim} V_{\mu}$. The dominant weights $\lambda\in P^+$ are in bijection with the simple ${\rm U}_q(\mathfrak{g})$-modules $L(\lambda)$ with the highest weight $\lambda$. For any two weights $\lambda,\mu$ we have the partial ordering $\lambda>\mu$ if and only if $\lambda-\mu$  is a sum of positive roots.
% We shall  abbreviate  $m_{L(\lambda)}(\mu)$ and $\Pi(L(\lambda))$ as $m_{\lambda}(\mu)$ and $\Pi(\lambda)$, respectively.

\subsection{Central elements}\label{sec: construction}
%Following \cite{ZGB91a}, we recall the basic construction of central elements of ${\rm U}_q(\mathfrak{g})$.

Let $V$ be an arbitrary finite dimensional ${\rm U}_q(\mathfrak{g})$-module. Let ${\rm Tr}_1$ denote the partial trace on the first tensor factor of ${\rm End}(V)\otimes {\rm U}_q(\mathfrak{g})$, i.e., ${\rm Tr}_1(\xi\otimes x)= {\rm Tr}(\xi)x$ for any $\xi\in {\rm End}(V)$ and $x\in {\rm U}_q(\mathfrak{g})$. Note that ${\rm Tr}_1(\xi\otimes x)$ is an element of ${\rm U}_q(\mathfrak{g})$.

The following crucial lemma is essentially from \cite{ZGB91a}, where the  setting is slightly different from ours. We include a proof in Appendix \ref{sec: proofs}. 

\begin{lemma}\cite[Proposition 1]{ZGB91a}\label{lem: Gamma}
   Given an operator $\Gamma_{V}\in {\rm End}(V)\otimes {\rm U}_q(\mathfrak{g})$ satisfying
   \begin{equation}\label{commrel}
     [\Gamma_V, \Delta(K_i^{\pm 1})]= [\Gamma_V, \Delta(E_i)]= [\Gamma_V, \Delta(F_i)]=0, \quad \forall i,
   \end{equation}
   the elements $C^{(k)}_{V}\in {\rm U}_q(\mathfrak{g})$ for $k=1, 2, \dots$ defined by 
   \begin{equation}\label{eq: cenelmt}
     C^{(k)}_{V}:= {\rm Tr}_1((K_{2\rho} \otimes 1)(\Gamma_V)^k) 
   \end{equation}
   are central in $ {\rm U}_q(\mathfrak{g})$,  where $\rho$ denotes the half sum of positive roots of $\mathfrak{g}$.
\end{lemma}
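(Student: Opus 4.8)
The plan is to deduce the centrality of every $C^{(k)}_V$ formally from the hypothesis \eqref{commrel} together with a single special feature of $K_{2\rho}$: it implements the square of the antipode by conjugation, $K_{2\rho}^{-1}xK_{2\rho}=S^2(x)$ for all $x\in{\rm U}_q(\mathfrak{g})$, which one checks directly on the generators from the given formulas for $S$ and the action of $K_{2\rho}$ (recall $(2\rho,\alpha_i)=(\alpha_i,\alpha_i)$). It is convenient to work through the algebra homomorphism $\rho_V:=(\pi_V\otimes{\rm id})\circ\Delta\colon{\rm U}_q(\mathfrak{g})\to{\rm End}(V)\otimes{\rm U}_q(\mathfrak{g})$, where $\pi_V$ denotes the action on $V$; then \eqref{commrel} says exactly that $\Gamma_V$ commutes with $\rho_V(E_i),\rho_V(F_i),\rho_V(K_i^{\pm1})$. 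Since $\rho_V$ is an algebra homomorphism and these elements generate ${\rm U}_q(\mathfrak{g})$, it follows immediately that $\Gamma_V$, and hence $A:=(\Gamma_V)^k$, commutes with $\rho_V(x)$ for every $x\in{\rm U}_q(\mathfrak{g})$.

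Next I would invoke the standard criterion that $u\in{\rm U}_q(\mathfrak{g})$ is central if and only if it is invariant under the right adjoint action ${\rm ad}_r(x)(u):=\sum S(x_{(1)})\,u\,x_{(2)}$, meaning ${\rm ad}_r(x)(u)=\varepsilon(x)u$ for all $x$; both implications are short manipulations using $\sum x_{(1)}S(x_{(2)})=\sum S(x_{(1)})x_{(2)}=\varepsilon(x)1$ in Sweedler notation. The point is that $A$ satisfies the analogous invariance for the adjoint action on ${\rm End}(V)\otimes{\rm U}_q(\mathfrak{g})$ transported along $\rho_V$: because $\rho_V$ is an algebra map and $A$ commutes with its whole image, $\sum\rho_V(S(x_{(1)}))\,A\,\rho_V(x_{(2)})=A\,\rho_V\bigl(\sum S(x_{(1)})x_{(2)}\bigr)=\varepsilon(x)A$.

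The heart of the argument, and the step I expect to be the main obstacle, is to show that the quantum trace ${\rm qTr}(X):={\rm Tr}_1((K_{2\rho}\otimes1)X)$ intertwines these two adjoint actions, that is, ${\rm qTr}\bigl(\sum\rho_V(S(x_{(1)}))\,X\,\rho_V(x_{(2)})\bigr)={\rm ad}_r(x)({\rm qTr}(X))$ for all $x$ and all $X$. Granting this and applying ${\rm qTr}$ to the invariance of $A$ yields ${\rm ad}_r(x)(C^{(k)}_V)=\varepsilon(x)C^{(k)}_V$, whence $C^{(k)}_V={\rm qTr}(A)$ is central by the criterion above. To prove the intertwining property I would expand $\rho_V(S(x_{(1)}))$ and $\rho_V(x_{(2)})$ using coassociativity and $\Delta S=(S\otimes S)\Delta^{\rm op}$, and then take the ordinary trace over the first factor. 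Cyclicity of the trace brings the first-tensor-factor part of $\rho_V(x_{(2)})$ to the front, and precisely here the identity $K_{2\rho}^{-1}xK_{2\rho}=S^2(x)$ is used to pull it through $K_{2\rho}$; the anti-homomorphism identity $S^2(a)S(b)=S(b\,S(a))$ then recombines the two inner coproduct legs so that the counit axiom collapses them, leaving exactly $\sum S(x_{(1)})\,{\rm qTr}(X)\,x_{(2)}$. The delicate points are that the partial trace ${\rm Tr}_1$ is cyclic only with respect to operators that are scalar in the second tensor factor, so every reordering must be routed through this $K_{2\rho}$-twisted cyclicity, and that one must track carefully which power of $S$ is produced at each stage; it is exactly the choice of $K_{2\rho}$ (rather than some other group-like element) that makes the spurious $S^2$ cancel against the $S$ coming from the adjoint action.
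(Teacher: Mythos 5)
Your proposal is correct, but it takes a genuinely different route from the paper's. The paper's proof (Appendix A) is an elementary generator-by-generator verification: writing $(\Gamma_V)^k=\sum_j A_j\otimes B_j$, it inserts the twisted element $K_i^{-1}K_{2\rho}$ into the partial trace of $[(\Gamma_V)^k,\Delta(E_i)]=0$, splits the result into $[C_V^{(k)},E_i]$ plus an error term, and kills the error term by cyclicity of the ordinary trace together with $(2\rho-\alpha_i,\alpha_i)=0$; the cases of $F_i$ and $K_i^{\pm1}$ are analogous. You instead run the standard Hopf-algebraic argument: $S^2(x)=K_{2\rho}^{-1}xK_{2\rho}$ (which checks out with this paper's coproduct $\Delta(E_i)=K_i\otimes E_i+E_i\otimes 1$ and antipode $S(E_i)=-K_i^{-1}E_i$, giving $S^2(E_i)=q^{-(\alpha_i,\alpha_i)}E_i$), the characterisation of the centre as the invariants of the right adjoint action, and the equivariance of the quantum trace ${\rm qTr}={\rm Tr}_1\circ((K_{2\rho}\otimes1)\,\cdot\,)$ with respect to the adjoint actions transported along $\rho_V=(\pi_V\otimes{\rm id})\circ\Delta$. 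Your sketch of the key computation is sound: after expanding with $\Delta S=(S\otimes S)\Delta^{\rm op}$ and cycling the first tensor factor through $K_{2\rho}$, the identity $\sum S^2(x_{(3)})S(x_{(2)})=\varepsilon(\,\cdot\,)1$ collapses the inner legs exactly as you describe. What your approach buys is uniformity (no case analysis over generators) and a conceptual explanation of why $K_{2\rho}$, rather than an arbitrary group-like element, must appear; what the paper's approach buys is brevity and the avoidance of any Hopf-algebraic machinery beyond the coproduct formulas. Note that both arguments ultimately rest on the same numerical fact $(2\rho,\alpha_i)=(\alpha_i,\alpha_i)$, which in your language is precisely the statement that $K_{2\rho}$ implements $S^2$.
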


Using the quasi $R$-matrix of ${\rm U}_q(\mathfrak{g})$, we shall construct  an explicit operator $\Gamma_V$ satisfying \eqref{commrel}.

Recall that the Drinfeld version of quantum group defined over formal power series $\mathbb{C}[[h]]$ admits a universal $R$ matrix \cite{Dri86,ZGB91a}, which is absent  for the  quantum group ${\rm U}_q(\mathfrak{g})$ considered here. But a quasi $R$-matrix $\mathfrak{R}$ exists for ${\rm U}_q(\mathfrak{g})$, which can be described as follows \cite{Lus10}.

Let ${\rm U}_q(\mathfrak{g})\widehat{\otimes} {\rm U}_q(\mathfrak{g})$ be a  completion of the tensor product ${\rm U}_q(\mathfrak{g})\otimes {\rm U}_q(\mathfrak{g})$. There is an algebra automorphism $\phi$ of $ {\rm U}_q(\mathfrak{g})\otimes {\rm U}_q(\mathfrak{g})$ defined by
\[
\begin{aligned}%\label{eq: phi1}
\phi(K_i\otimes 1)=K_i\otimes1,\quad&\phi(E_i\otimes 1)=E_i\otimes K_{i}^{-1},\quad\phi(F_i\otimes 1)=F_i\otimes K_{i},\\
%\label{eq: phi2}
\phi(1\otimes K_i)=1\otimes K_i,\quad&\phi(1\otimes E_i)=K_{i}^{-1}\otimes E_i ,\quad\phi(1\otimes F_i)=K_{i}\otimes F_i,
\end{aligned}
\]
and $\phi$ can be extended to ${\rm U}_q(\mathfrak{g})\widehat{\otimes} {\rm U}_q(\mathfrak{g})$.  The quasi $R$-matrix $\mathfrak{R}$ is an element of ${\rm U}_q(\mathfrak{g})\widehat{\otimes} {\rm U}_q(\mathfrak{g})$ which has the form 
\[ \mathfrak{R}=1\otimes 1+ \sum_{\nu>0}\Theta_{\nu}\in {\rm U}_q(\mathfrak{g})\widehat{\otimes} {\rm U}_q(\mathfrak{g}),  \]
where $\Theta_{\nu}\in {\rm U}_{-\nu}^-\otimes {\rm U}_{\nu}^+$ for positive $\nu\in Q$. An explicit formula for $\mathfrak{R}$ can be found in, e.g., \cite[\S8.3.3]{KS97} (see \exref{exam: sl2} for case of ${\rm U}_q(\mathfrak{sl}_2)$).  The quasi $R$-matrix satisfies the following relations
\begin{equation}\label{eq: quasiR}
\begin{aligned}
   \mathfrak{R}\Delta(x)&=\phi(\Delta'(x))\mathfrak{R}, \quad
     \mathfrak{R}^T\Delta'(x)&=\phi(\Delta(x))\mathfrak{R}^T, 
\end{aligned}
\end{equation}
where  $\mathfrak{R}^T=T(\mathfrak{R})$ with $T$ being the linear map defined by $T(x\otimes y)= y\otimes x$ for $x,y\in {\rm U}_q(\mathfrak{g})$ (see, e.g.,  \cite[\S 4.3]{Tan92}).

%This way we will obtain a central element whose image under the Harish-Chandra isomorphism \eqref{eq: HCmap}  lies in $({\rm U}_{{\rm ev}}^0)^W$. %refer to \propref{prop: imHC}.

In what follows, we assume that the ${\rm U}_q({\rm g})$-module $V$ satisfies
% \footnote{This way we will obtain a central element whose image under the Harish-Chandra isomorphism lies in $({\rm U}_{{\rm ev}}^0)^W$; refer to \thmref{thm: HCiso} and \lemref{lem: imHC}.} 
\begin{equation}\label{eq: weightcon}
   \Pi(V)\subset  M= \frac{1}{2}Q\cap P,
\end{equation}
where $\Pi(V)$ is the set of all weights of $V$.  Denote by $\zeta_V: {\rm U}_q(\mathfrak{g})\rightarrow {\rm GL}(V)$ the linear representation. We define the following elements of ${\rm End}(V) \otimes {\rm U}_q(\mathfrak{g})$:
\begin{equation}\label{eq: R}
	\mathcal{R}_V:=(\zeta_V\otimes {\rm id})(\mathfrak{R}), \quad
\widetilde{\mathcal{R}}^T_V:=(\zeta_V\otimes {\rm id})\phi(\mathfrak{R}^T)
\end{equation}
On the other hand, we define the diagonal part  by 
\begin{equation}\label{eq: KV}
  \mathcal{K}_V:=\sum_{\eta \in \Pi(V)} P_{\eta}\otimes K_{2\eta}, 
\end{equation}
where  $P_{\eta}$ is the linear projection from $V$ to its weight space $V_{\eta}$.  Note that the condition \eqref{eq: weightcon} guarantees that $2\eta\in Q$ for any weight $\eta$ of $V$ and hence  $\mathcal{K}_V\in {\rm End}(V)\otimes {\rm U}_q(\mathfrak{g})$.

\begin{definition}\label{def: Gamma}
Given a finite dimension ${\rm U}_q(\mathfrak{g})$-module $V$ whose weights are contained in $M=\frac{1}{2}Q\cap P$, we define the operator 
\[ \Gamma_V:=\mathcal{K}_V \widetilde{\mathcal{R}}^T_V \mathcal{R}_V \ \ \in {\rm End}(V)\otimes {\rm U}_q(\mathfrak{g}), \]
 where  $\mathcal{R}_V$ and  $\widetilde{\mathcal{R}}^T_V$ are given in \eqref{eq: R} and $\mathcal{K}_V$ is defined by \eqref{eq: KV}. Let 
\[
C^{(k)}_{V}:= {\rm Tr}_1((K_{2\rho} \otimes 1)(\Gamma_V)^k), \quad k=1, 2, \dots,
\]
and write  $C_V=C^{(1)}_V$. 
\end{definition}

By \lemref{lem: Gamma} and Proposition \ref{prop: commrel} below,  $C^{(k)}_V$ are central elements of ${\rm U}_q(\mathfrak{g})$.  

\begin{proposition}\label{prop: commrel}
The element $\Gamma_V$ given in \defref{def: Gamma} satisfies the commutative relations \eqref{commrel}, i.e., $[\Gamma_V, \Delta(x)]=0$ for any $x\in {\rm U}_q(\mathfrak{g})$.
\end{proposition}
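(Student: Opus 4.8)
The plan is to verify the relation on the algebra generators $K_i^{\pm1}, E_i, F_i$ and then extend it to all of ${\rm U}_q(\mathfrak{g})$ by multiplicativity: since both $\Delta$ and $\zeta_V\otimes{\rm id}$ are algebra homomorphisms, once $\Gamma_V$ commutes with $(\zeta_V\otimes{\rm id})\Delta(x)$ and $(\zeta_V\otimes{\rm id})\Delta(y)$ it automatically commutes with $(\zeta_V\otimes{\rm id})\Delta(xy)$. Throughout I abbreviate $\pi:=\zeta_V\otimes{\rm id}$, so that $\mathcal{R}_V=\pi(\mathfrak{R})$, $\widetilde{\mathcal{R}}^T_V=\pi(\phi(\mathfrak{R}^T))$, and the claim reads $[\Gamma_V,\pi(\Delta(x))]=0$, the bracket being taken in ${\rm End}(V)\otimes{\rm U}_q(\mathfrak{g})$.

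The core idea is to push $\pi(\Delta(x))$ from the right of $\Gamma_V=\mathcal{K}_V\widetilde{\mathcal{R}}^T_V\mathcal{R}_V$ successively through the three factors, transforming the element it conjugates at each stage, and to check that after passing all three one recovers exactly $\pi(\Delta(x))$ on the left. First I would apply $\pi$ to the quasi $R$-matrix relation $\mathfrak{R}\Delta(x)=\phi(\Delta'(x))\mathfrak{R}$ to obtain
\[
\mathcal{R}_V\,\pi(\Delta(x))=\pi(\phi(\Delta'(x)))\,\mathcal{R}_V .
\]
Next, applying the automorphism $\phi$ to the second relation $\mathfrak{R}^T\Delta'(x)=\phi(\Delta(x))\mathfrak{R}^T$ and then $\pi$ yields
\[
\widetilde{\mathcal{R}}^T_V\,\pi(\phi(\Delta'(x)))=\pi(\phi^2(\Delta(x)))\,\widetilde{\mathcal{R}}^T_V .
\]
Both identities hold for all $x$ by \eqref{eq: quasiR}, and together they move the doubly twisted element $\pi(\phi^2(\Delta(x)))$ to the left of $\widetilde{\mathcal{R}}^T_V\mathcal{R}_V$.

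The decisive step, and the one I expect to be the main obstacle, is the interaction with the diagonal factor $\mathcal{K}_V=\sum_{\eta}P_\eta\otimes K_{2\eta}$, namely the identity
\[
\mathcal{K}_V\,\pi(\phi^2(\Delta(x)))=\pi(\Delta(x))\,\mathcal{K}_V .
\]
This is where the precise definition of $\mathcal{K}_V$ with the \emph{doubled} weights $K_{2\eta}$, together with the hypothesis $\Pi(V)\subset M$ ensuring $2\eta\in Q$, enters essentially. For $x=E_i$ one computes $\phi^2(\Delta(E_i))=K_i^{-1}\otimes E_i+E_i\otimes K_i^{-2}$; then for the first summand the scalar $P_\eta\zeta_V(K_i^{-1})=q^{-(\alpha_i,\eta)}P_\eta$ combines with $K_{2\eta}E_i=q^{2(\eta,\alpha_i)}E_iK_{2\eta}$, while for the second summand the weight shift $P_\eta\zeta_V(E_i)=\zeta_V(E_i)P_{\eta-\alpha_i}$ combines with $K_{2\eta}=K_{2(\eta-\alpha_i)}K_i^{2}$, so that in both cases the factor $\phi^2$ is exactly cancelled and one lands back on $\pi(\Delta(E_i))\mathcal{K}_V$. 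The analogous computation for $F_i$ uses $\phi^2(\Delta(F_i))=F_i\otimes K_i+K_i^2\otimes F_i$ with the opposite weight shift, and for $K_i$ the identity is immediate since $\phi$ fixes every $K_i$ and $\mathcal{K}_V$ commutes with the group-like $\zeta_V(K_i)\otimes K_i$.

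Finally I would assemble the chain
\[
\Gamma_V\,\pi(\Delta(x))=\mathcal{K}_V\widetilde{\mathcal{R}}^T_V\mathcal{R}_V\,\pi(\Delta(x))=\mathcal{K}_V\,\pi(\phi^2(\Delta(x)))\,\widetilde{\mathcal{R}}^T_V\mathcal{R}_V=\pi(\Delta(x))\,\Gamma_V ,
\]
applying the three displayed identities in turn, which proves the proposition. The routine verifications I would suppress are the explicit evaluations of $\phi$ and $\phi^2$ on $\Delta(E_i)$ and $\Delta(F_i)$ and the weight-shift bookkeeping in the $\mathcal{K}_V$ computation.
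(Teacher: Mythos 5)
Your proposal is correct and follows essentially the same route as the paper: push $\Delta(x)$ through $\mathcal{R}_V$ and $\widetilde{\mathcal{R}}^T_V$ via the two quasi-$R$-matrix relations (the second one twisted by $\phi$), and then absorb the resulting $\phi^2(\Delta(x))$ into $\mathcal{K}_V$ by the weight-shift identities, which the paper isolates as Lemma~\ref{lem: Krel}(2) and likewise verifies on the generators $K_i^{\pm1},E_i,F_i$. Your explicit formulas $\phi^2(\Delta(E_i))=K_i^{-1}\otimes E_i+E_i\otimes K_i^{-2}$ and the cancellation via $P_\eta\zeta_V(E_i)=\zeta_V(E_i)P_{\eta-\alpha_i}$ match the paper's computation exactly.
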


A proof of \propref{prop: commrel} is given in Appendix \ref{sec: proofs}.

The following is an example of our construction. 

\begin{example}\label{exam: sl2}
Let $\mathfrak{g}=\mathfrak{sl}_2$.  The quasi $R$-matrix of ${\rm U}_q(\mathfrak{sl}_2)$ is given by 
 $$\mathfrak{R}=\sum_{n=0}^{\infty} q^{\frac{n(n+1)}{2}}\frac{(1-q^{-2})^n}{[n]_q!} F^n\otimes E^n.$$

Let $V$ be the $2$-dimensional simple module, then the weights of $V$ are contained in  $M=\frac{1}{2}Q\cap P=\mathbb{Z}$. 
Denote by $\zeta: {\rm U}_q(\mathfrak{g}) \rightarrow {\rm End}(V)$ the representation corresponding to the standard basis $\{e_1, e_2\}$ of $V$, we have 
\[
\zeta(E)=\begin{pmatrix}0 & 1\\ 0 & 0\end{pmatrix}, \quad 
\zeta(F)=\begin{pmatrix}0 & 0\\ 1& 0\end{pmatrix}, \quad 
\zeta(K)=\begin{pmatrix}q & 0\\ 0 & q^{-1}\end{pmatrix}.
\]
Now 
\begin{align*}
\mathcal{R}_V&=1\otimes 1+(q-q^{-1})\zeta(F)\otimes E, \\
\widetilde{\mathcal{R}}_V^T&=1\otimes 1+(q-q^{-1})\zeta(EK)\otimes K^{-1}F,\\ 
\mathcal{K}_{V}&=P_{1}\otimes K+P_{-1}\otimes K^{-1},
\end{align*}
where $P_{1}$ (resp. $P_{-1}$) is the linear projection from $V$ onto the weight space ${\mathbb C}(q)e_1$ (resp. ${\mathbb C}(q)e_2$), and we have $P_1=\begin{pmatrix}1& 0\\ 0 & 0\end{pmatrix}$ (res. $P_{-1}=\begin{pmatrix}0& 0\\ 0 & 1\end{pmatrix}$).  We have  
\[
\begin{aligned}
\Gamma_V
=&\mathcal{K}_V \widetilde{\mathcal{R}}^T_V\mathcal{R}_V\\
=& P_{1}\otimes K+P_{-1}\otimes K^{-1}+ (q-q^{-1})\zeta(F) \otimes K^{-1}E \\
&+ (1-q^{-2}) \zeta(E)\otimes F + (q-q^{-1})^2q^{-1} P_1\otimes FE.
\end{aligned}
\]
Note that in $\Gamma_V$ the first tensor factors $\zeta(E)$ and $\zeta(F)$ have no contributions to the partial trace ${\rm Tr}_1$.
Using $K_{2\rho}P_{\pm 1}= KP_{\pm 1}= q^{\pm 1}P_{\pm 1}$, we obtain the following central element associated to $V$:
\[
 C_{V}= {\rm Tr}_1((K_{2\rho} \otimes 1) \Gamma_{V})=qK+q^{-1}K^{-1}+(q-q^{-1})^2FE. 
\]
By similar straightforward calculation, one can also express the higher order central elements $C^{(k)}$ as $\mathbb{C}(q)$-linear combinations of the powers $C_V^k$: 
\[ 
\begin{aligned}
  C_V^{(2)}&=q^{-1}C_V^2-q^{-1}-q^{-3},\\
  C_{V}^{(3)}&= q^{-2}C_{V}^3-(2q^{-2}+q^{-4})C_V,\\
  C_V^{(4)}&= q^{-3}C_V^4-(3q^{-3}+q^{-5})C_V^2+q^{-3}+q^{-5}.\\
  %C_V^{(5)}&= q^{-4}C_V^5-(4q^{-4}+q^{-6})C_{V}^3+(3q^{-4}+2q^{-6})C_V.
\end{aligned}
\]
\end{example}

\subsection{The main theorem}
We shall state our main theorem which exhibits explicit generators and relations of the centre of the quantum group. 

Recall from \defref{def: Gamma} that the central elements are associated with ${\rm U}_q(\mathfrak{g})$-modules $V$ whose weights are contained $M$. In particular, the highest weights of these modules are contained in $M^+=\frac{1}{2}Q\cap P^+$, which is an additive monoid, i.e., a commutative semigroup with the identity $0$. An element $x\in M^+$ is said to be irreducible if $x=y+z$ implies either $y=0$  or $z=0$. The Hilbert basis ${\rm Hilb}(M^+)$ of $M^+$ is a minimal set of generators given by its irreducible elements. 

%As we have said in the Introduction, there is an isomorphism between the centre $Z({\rm U}_q(\mathfrak{g}))$ and the monoid algebra $\mathbb{C}(q)[M^+]$. The latter is generated by the 

The generators of the centre $Z({\rm U}_q(\mathfrak{g}))$ can be chosen in bijection with the elements of  ${\rm Hilb}(M^+)$.  Given any $\lambda=\sum_{i=1}^na_{i}\varpi_i \in {\rm Hilb}(M^+)$, we define the tensor module
\[ T(\lambda):= \bigotimes_{i=1}^n L(\varpi_i)^{\otimes a_i},   \]
where $L(\varpi_i)$ is the fundamental representation of ${\rm U}_q(\mathfrak{g})$. Note that for any weight $\mu\in \Pi(T(\lambda))$, we have $\lambda-\mu\in Q\subseteq M=\frac{1}{2}Q\cap P$. Therefore, $ \Pi(T(\lambda))\subseteq M$ and we may define the associated central element $C_{T(\lambda)}$. 

Particularly, if $\mathfrak{g}$ is of type $A_1$, $B_n(n\geq 2)$, $C_n(n\geq 3)$, $D_{2k}(k\geq 2)$, $E_7$, $E_8$, $F_4$, and $G_2$, we will show that the Hilbert basis ${\rm Hilb}(M^+)$ consists of all fundamental weights of $\mathfrak{g}$. Hence we obtain $n$ generators $C_{L(\varpi_i)}$ of  $Z({\rm U}_q(\mathfrak{g}))$, which will be shown to be algebraically independent.

To describe relations among central elements $C_{T(\lambda)}$ for $\mathfrak{g}$ of  one of the remaining types $A_n(n\geq 2)$, $D_{2k+1}(k\geq 2)$ and $E_6$, we introduce the automorphism $\sigma$ of the corresponding Dynkin diagram. This is depicted as in \figref{fig: involution}, where each pair of vertices which are connected by a curved double arrow means they are swapped by the involution $\sigma$ and the rest vertices are fixed by $\sigma$. For instance, $\sigma(i)=n+1-i, 1\leq i\leq n$ for type $A_n(n\geq 2)$.

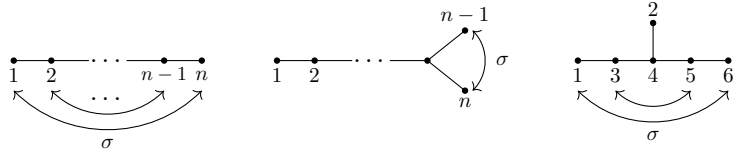
\begin{figure}[h]
   \begin{tikzpicture}

   \draw (0,0)--(0.5,0);
   \filldraw (0,0) circle (1pt);
   \filldraw (0.5,0) circle (1pt);
   \draw (0.5,0)--(1,0);
   \node [scale=1] at (1.25,0) {$\dots$};
   \draw (1.5,0)--(2.5,0);
   \filldraw (2.0,0) circle (1pt);
   \filldraw (2.5,0) circle (1pt);

    \node [scale=0.8,below] at (0,0) {$1$};
    \node [scale=0.8,below] at (0.5,0) {$2$};
    \node [scale=0.75,below] at (2.0,0) {$n-1$};
    \node [scale=0.8,below] at (2.5,-0.04) {$n$};

   \draw[<->] (0,-0.4) to [bend right=45] (2.5,-0.4);
   \draw[<->] (0.5,-0.4) to [bend right=45] (2,-0.4);
   \node [scale=1] at (1.25,-0.5) {$\dots$};
   \node [scale=0.8] at (1.25,-1.1) {$\sigma$};

   \draw (3.5,0)--(4.5,0);
   \filldraw (3.5,0) circle (1pt);
   \filldraw (4.0,0) circle (1pt);
   \node [scale=1] at (4.75,0) {$\dots$};
   \draw (5,0)--(5.5,0);
   \draw (5.5,0)--(6,0.4);
   \draw (5.5,0)--(6,-0.4);
   \filldraw (5.5,0) circle (1pt);
   \filldraw (6,0.4) circle (1pt);
   \filldraw (6,-0.4) circle (1pt);

   \node [scale=0.8,below] at (3.5,0) {$1$};
   \node [scale=0.8,below] at (4.0,0) {$2$};
   \node [scale=0.8,above] at (6,0.4) {$n-1$};
   \node [scale=0.8,below] at (6,-0.4) {$n$};

   \draw[<->] (6.1,0.4) to [bend left=45] (6.1,-0.4);
   \node [scale=0.8] at (6.5,0) {$\sigma$};

   \draw (7.5,0)--(9.5,0);
   \foreach \x in {7.5,8,8.5,9,9.5} { \filldraw (\x,0) circle (1pt);}
   \draw (8.5,0)--(8.5, 0.5);
   \filldraw (8.5,0.5) circle (1pt);

   \node [scale=0.8,below] at (7.5,0) {$1$};
   \node [scale=0.8,below] at (8,0) {$3$};
   \node [scale=0.8,below] at (8.5,0) {$4$};
   \node [scale=0.8,below] at (9,0) {$5$};
   \node [scale=0.8,below] at (9.5,0) {$6$};
   \node [scale=0.8,above] at (8.5,0.5) {$2$};

   \draw[<->] (7.5,-0.4) to [bend right=45] (9.5,-0.4);
   \draw[<->] (8,-0.4) to [bend right=45] (9,-0.4);
   \node [scale=0.8] at (8.5,-1) {$\sigma$};

\end{tikzpicture}
\caption{The involutions $\sigma$ of types $A$, $D$ and $E_6$.}
 \label{fig: involution}
\end{figure}

The automorphism $\sigma$ induces an involution $\sigma_{M^+}$ of the monoid $M^+$. Precisely, if $\lambda=\sum_{i=1}^na_{i} \varpi_i\in {\rm Hilb}(M^+)$, then we define $\overline{\lambda}:=\sigma_{M^+}(\lambda)=\sum_{i=1}^na_{\sigma(i)} \varpi_i$; refer to \lemref{lem: symmertical}. The element $\lambda$ is said to be self-conjugate if $\lambda=\overline{\lambda}$; otherwise, it is called non-self-conjugate. These elements will be characterised explicitly in \lemref{lem: HilMsigma} and \lemref{lem: HilM}.

The following is our main theorem of this paper.

\begin{theorem}\label{thm: cen}
Let $\mathfrak{g}$ be a complex simple Lie algebra of rank $n$, and let ${\rm Hilb}(M^+)$ be the Hilbert basis of the monoid $M^+=\frac{1}{2}Q\cap P^+$, where $\frac{1}{2}Q$ denotes the half root lattice and $P^+$ is the monoid of dominant weights of $\mathfrak{g}$. 
\begin{enumerate}
   \item If $\mathfrak{g}$ is one of the types $A_1$, $B_n(n\geq 2)$, $C_n(n\geq 3)$, $D_{2k}(k\geq 2)$, $E_7$, $E_8$, $F_4$, and $G_2$, then the centre $Z({\rm U}_q(\mathfrak{g}))$ of the quantum group ${\rm U}_q(\mathfrak{g})$ is generated by $n$  algebraically independent elements $C_{L(\varpi_1)}, \dots, C_{L(\varpi_n)}$, where $L(\varpi_i)$ are simple modules corresponding to the fundamental weights $\varpi_i$;

   \item If $\mathfrak{g}$ is one of the types $A_n(n\geq 2)$, $D_{2k+1}(k\geq 2)$ and $E_6$, then the centre $Z({\rm U}_q(\mathfrak{g}))$ of the quantum group ${\rm U}_q(\mathfrak{g})$ is generated by $C_{T(\lambda)}, \lambda\in {\rm Hilb}(M^+)$, subject to the following relations:
    \[
   \begin{aligned}
      &C_{T(\lambda)}C_{T(\bar{\lambda})}=\prod_{i: i<\sigma(i)} C_{T({\mu_i})}^{{\rm max}\{a_i, a_{\sigma(i)}\}},   \\
      &C_{T(\lambda)}^{\ell(\lambda)} = \prod_{i=1}^n C_{T(\nu_i)}^{\ell(\lambda)a_i/s_i} \,\,  \text{with $\lambda\neq  \nu_i, 1\leq i\leq n$} 
   \end{aligned}
   \]
for each non-self-conjugate pair $\{\lambda, \overline{\lambda}\}$ of ${\rm Hilb}(M^+)$ with $\lambda= \sum_{i=1}^na_i\varpi_i$ and $\overline{\lambda}= \sum_{i=1}^na_{\sigma(i)}\varpi_i$, where $\sigma$ is the involution of the Dynkin diagram given by \figref{fig: involution},  $\mu_i\in {\rm Hilb}(M^+)$ are self-conjugate elements  given by \lemref{lem: HilMsigma},  $\nu_i=s_i\varpi_i \in {\rm Hilb}(M^+)$ are scalar multiples of the fundamental weights $\varpi_i$ with $s_i$ given by \lemref{lem: seqint}, and  $\ell(\lambda)$ is a positive integer defined by \eqref{eq: mla}. 

\end{enumerate}
\end{theorem}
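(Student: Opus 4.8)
The plan is to realise the centre as a monoid algebra through a chain of algebra isomorphisms
\[
Z({\rm U}_q(\mathfrak{g})) \xleftarrow{\;\Phi\;} S({\rm U}_q(\mathfrak{g})) \xleftarrow{\;\Xi\;} \mathbb{C}(q)[M^+],
\]
and then to transport the presentation of $\mathbb{C}[M^+]$ obtained in \thmref{thm: monoalg1} and \thmref{thm: monoalg2} across these isomorphisms, reading off (1) and (2) at the end.

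First I would set up $\Phi$. The assignment $[V]\mapsto C_V$ is well defined on $S({\rm U}_q(\mathfrak{g}))$ because the partial trace in \defref{def: Gamma} is additive on short exact sequences and depends only on the isomorphism class of $V$. To prove $\Phi$ is an algebra isomorphism I would compose it with the quantised Harish-Chandra isomorphism $\xi\colon Z({\rm U}_q(\mathfrak{g}))\xrightarrow{\sim}({\rm U}_{{\rm ev}}^0)^W$ and compute the image. The key computation is that, after the $\rho$-shift built into $\xi$, the factor $K_{2\rho}$ and the diagonal part $\mathcal{K}_V$ cancel the $q$-powers produced by projecting $\widetilde{\mathcal{R}}^T_V\mathcal{R}_V$ onto ${\rm U}^0$, so that
\[
\xi(C_V)=\mathrm{ch}(V):=\sum_{\mu\in\Pi(V)}m_V(\mu)\,K_{2\mu},
\]
the honest character of $V$, which is manifestly $W$-invariant since $\Pi(V)$ is $W$-stable and $m_V$ is constant on $W$-orbits; the $\mathfrak{sl}_2$ instance of this cancellation is already visible in \exref{exam: sl2}. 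Granting this, $\mathrm{ch}(V\otimes W)=\mathrm{ch}(V)\mathrm{ch}(W)$ together with the injectivity of $\xi$ forces $C_{V\otimes W}=C_VC_W$, so $\Phi$ is multiplicative; and since the characters $\{\mathrm{ch}(L(\lambda)):\lambda\in M^+\}$ are unitriangular with respect to the orbit-sum basis of $({\rm U}_{{\rm ev}}^0)^W$, the map $\mathrm{ch}=\xi\circ\Phi$ is a linear isomorphism, whence $\Phi$ is an algebra isomorphism.

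Next I would construct $\Xi$. Because the braiding of finite-dimensional modules renders $S({\rm U}_q(\mathfrak{g}))$ commutative, one has $[T(\lambda)]\,[T(\mu)]=\prod_i[L(\varpi_i)]^{a_i+b_i}=[T(\lambda+\mu)]$, so $\lambda\mapsto[T(\lambda)]$ is a homomorphism of monoids and extends to an algebra map $\Xi\colon\mathbb{C}(q)[M^+]\to S({\rm U}_q(\mathfrak{g}))$, $e^{\lambda}\mapsto[T(\lambda)]$. Since $T(\lambda)$ has highest weight $\lambda$ of multiplicity one, $[T(\lambda)]=[L(\lambda)]+\sum_{\mu<\lambda}c_\mu[L(\mu)]$ with all $\mu\in M^+$, so $\{[T(\lambda)]:\lambda\in M^+\}$ is unitriangular with respect to the basis $\{[L(\lambda)]\}$ and is itself a $\mathbb{C}(q)$-basis; hence $\Xi$ is an isomorphism. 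The composite $\Phi\circ\Xi\colon\mathbb{C}(q)[M^+]\xrightarrow{\sim}Z({\rm U}_q(\mathfrak{g}))$ then sends $e^{\lambda}\mapsto C_{T(\lambda)}$, so every generator and relation of the monoid algebra transports verbatim to the elements $C_{T(\lambda)}$.

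It remains to read off the two cases. For the types in (1) the lattice $M$ equals $P$ (equivalently $P/Q$ is $2$-torsion, the case distinction \eqref{eq: type}), so by \thmref{thm: monoalg1} we have ${\rm Hilb}(M^+)=\{\varpi_1,\dots,\varpi_n\}$, $T(\varpi_i)=L(\varpi_i)$, and $\mathbb{C}(q)[M^+]$ is a polynomial algebra; transporting through $\Phi\circ\Xi$ shows $Z({\rm U}_q(\mathfrak{g}))$ is freely generated by the algebraically independent elements $C_{L(\varpi_1)},\dots,C_{L(\varpi_n)}$. For the types in (2) I would invoke the presentation of $\mathbb{C}[M^+]$ from \thmref{thm: monoalg2}: for each non-self-conjugate $\lambda=\sum_i a_i\varpi_i\in{\rm Hilb}(M^+)$ the additive identities $\lambda+\overline{\lambda}=\sum_{i<\sigma(i)}\max\{a_i,a_{\sigma(i)}\}\mu_i$ and $\ell(\lambda)\lambda=\sum_i(\ell(\lambda)a_i/s_i)\nu_i$ in $M^+$, furnished by \lemref{lem: symmertical}, \lemref{lem: HilMsigma}, \lemref{lem: HilM}, \lemref{lem: seqint} and \eqref{eq: mla}, become exactly the two stated relations under $e^{\lambda}\mapsto C_{T(\lambda)}$, and by \thmref{thm: monoalg2} they exhaust the relations. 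The main obstacle is the Harish-Chandra computation $\xi(C_V)=\mathrm{ch}(V)$: tracking how the $\rho$-shift in $\xi$, the factor $K_{2\rho}$, and the off-diagonal part of $\widetilde{\mathcal{R}}^T_V\mathcal{R}_V$ conspire to produce precisely the untwisted character is the delicate point on which the multiplicativity and surjectivity of $\Phi$, and hence the whole reduction to $\mathbb{C}(q)[M^+]$, rest.
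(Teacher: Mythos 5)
Your proposal follows essentially the same route as the paper: the Harish--Chandra image computation $\gamma_{-\rho}\circ\pi(C_V)=\sum_{\mu}m_V(\mu)K_{2\mu}$ is the paper's \lemref{lem: imHC}, the unitriangularity of characters against orbit sums is the paper's \lemref{lem: SU}, the basis $[T(\lambda)]$, $\lambda\in M^+$ via unitriangularity against $[L(\lambda)]$ is \lemref{lem: monoalgiso}, and the final transport of the presentation from \thmref{thm: monoalg1} and \thmref{thm: monoalg2} is exactly the paper's concluding step. The argument is correct and matches the paper's proof.
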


The remainder of the paper is on the proof of \thmref{thm: cen}. The main idea of the proof is to show that the centre $Z({\rm U}_q(\mathfrak{g}))$ is isomorphic to the monoid algebra $\mathbb{C}(q)[M^+]$, which is more conceptual and will be studied systematically in \secref{sec: monoid}. The actual proof of \thmref{thm: cen} is given in Section \ref{sec: cenalg}.

\section{The monoid $M^+$}\label{sec: monoid}

%Recall that $M^{+}:= \frac{1}{2}Q\cap P^+$, where $\frac{1}{2}Q:=\{\frac{1}{2}\alpha \mid \alpha\in Q \}$ denotes the half root lattice. 

In this section we describe the Hilbert basis ${\rm Hilb}(M^+)$ for the monoid  $M^+$ associated to a simple Lie algebra $\mathfrak{g}$.
Using the automorphism of the Dynkin diagram, we  give a presentation of the monoid algebra of $M^+$.
%which will prove  to be isomorphic to the centre of the quantum group in \secref{sec: cenalg}.

\subsection{The Hilbert basis of $M^+$}
Some results in this subsection can be found in \cite{LXZ16}. We include proofs for them to make the paper more accessible. 

In the sequel, for explicit formulae of fundamental weights of $\mathfrak{g}$ we refer to \cite[\S 13.2, Table 1]{Hum72}, and for Dynkin diagrams we refer to \cite[\S11, Theorem 11.4]{Hum72}.

\begin{lemma}\cite[Lemma 3.4]{LXZ16}\label{lem: finiteness}
For each simple Lie algebra $\mathfrak{g}$, the Hilbert basis ${\rm Hilb}(M^+)$ is finite.
\end{lemma}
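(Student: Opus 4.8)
The plan is to realise $M^+$ as the set of lattice points of a genuine lattice lying in the dominant cone, and then reduce the finiteness of its irreducible elements to a standard well-ordering fact (Dickson's lemma). The key structural observation to record first is that $M=\tfrac{1}{2}Q\cap P$ is a \emph{lattice}, i.e.\ an additive subgroup of $\mathfrak{h}^{\ast}$, being the intersection of the two lattices $\tfrac{1}{2}Q$ and $P$; in particular $M$ is closed under subtraction. Consequently, for $\lambda,\mu\in M^+$ the difference $\lambda-\mu$ automatically lies in $M$, so $\lambda-\mu\in M^+$ if and only if $\lambda-\mu\in P^+$. Writing elements in fundamental-weight coordinates $\lambda=\sum_i a_i\varpi_i$ with $a_i\in\mathbb{Z}_{\geq 0}$, this shows that the divisibility preorder on $M^+$, where $\mu\preceq\lambda$ means $\lambda-\mu\in M^+$, coincides \emph{exactly} with the componentwise order on the coordinate vectors $(a_1,\dots,a_n)\in\mathbb{Z}_{\geq 0}^n$.

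Next I would characterise the Hilbert basis order-theoretically: an element $x\in M^+\setminus\{0\}$ is irreducible, hence lies in ${\rm Hilb}(M^+)$, precisely when it is minimal among nonzero elements for $\preceq$. Indeed, if $0\neq y\preceq x$ with $y\neq x$ then $x=y+(x-y)$ is a nontrivial decomposition, while conversely any nontrivial decomposition $x=y+z$ exhibits a strictly smaller nonzero element. Under the coordinate identification above, ${\rm Hilb}(M^+)$ is therefore the set of componentwise-minimal points of the subset of $\mathbb{Z}_{\geq 0}^n$ corresponding to $M^+\setminus\{0\}$. By Dickson's lemma every subset of $\mathbb{Z}_{\geq 0}^n$ has only finitely many minimal elements under the componentwise order, so ${\rm Hilb}(M^+)$ is finite. (The same well-foundedness of $\preceq$, via the strictly decreasing coordinate sum, simultaneously guarantees that these irreducibles actually generate $M^+$, justifying the terminology ``basis''.)

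I do not anticipate a serious obstacle: this is a standard affine-monoid finiteness statement, and at a higher level one could instead invoke Gordan's lemma to see that $M^+$, as the set of $M$-lattice points in the pointed rational cone $P^+\otimes_{\mathbb{Z}}\mathbb{R}$, is a finitely generated monoid, whence its irreducible elements must occur among any finite generating set and are finite in number. The only point genuinely requiring care is the verification that $\preceq$ restricts to the componentwise order; this relies essentially on $M$ being closed under subtraction rather than being merely a submonoid, and once that is established the remaining combinatorial input is entirely routine.
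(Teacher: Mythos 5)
Your proof is correct, but it takes a genuinely different route from the paper's. The paper argues concretely: for each $i$ it takes the minimal positive integer $s_i$ with $s_i\varpi_i\in M^+$ and shows that any irreducible $\lambda=\sum_i a_i\varpi_i$ must satisfy $a_i\leq s_i$, by splitting off $s_{i_0}\varpi_{i_0}$ from any $\lambda$ violating the bound and checking that the remainder stays in $M^+$; finiteness then follows because ${\rm Hilb}(M^+)$ sits inside an explicit finite box. You instead observe that $M$ is a subgroup of $P$, so that the divisibility order on $M^+$ coincides with the componentwise order on fundamental-weight coordinates, identify the irreducibles with the componentwise-minimal nonzero elements, and invoke Dickson's (or Gordan's) lemma. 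Both arguments rest on the same structural fact --- closure of $M$ under subtraction, so that a componentwise-smaller element of $M^+$ actually divides in the monoid sense; the paper uses this only against the specific elements $s_{i_0}\varpi_{i_0}$, while you use it in full. Your version is softer and more general (it works verbatim for lattice points in any pointed rational cone), whereas the paper's version buys an explicit bound $a_i\leq s_i$ that it reuses later, e.g.\ in the proof of part (2) of \lemref{lem: Hilbtype}, which cites ``the proof of \lemref{lem: finiteness}'' precisely for this inequality. If you wanted your argument to serve the same downstream purpose, you would need to append the paper's splitting step (or note that $s_i\varpi_i\in M^+$ forces $a_i\leq s_i$ for any componentwise-minimal element).
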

\begin{proof}

Observe from  \cite[\S 13.2, Table 1]{Hum72} that for every fundamental weight $\varpi_i$, there exists a minimal positive integer $s_i$ such that $s_i\varpi_i\in M^+$. Given any $\lambda=\sum_{i=1}^na_i\varpi_i\in {\rm Hilb}(M^+)$, we just need to prove  that $0\leq a_i\leq s_i$ for all $i=1,\dots, n$.

Assuming for contradiction that there exists an index $i_0$ such that $a_{i_0}> m_{i_0}$, we have
 $$\lambda=\sum_{i\neq i_0} a_i\varpi_i+(a_{i_0}-s_{i_0})\varpi_{i_0}+s_{i_0}\varpi_{i_0}.$$ 
 Let $\mu=\sum_{i=1,,i\neq i_0}^na_i\varpi_i+(a_{i_0}-s_{i_0})\varpi_{i_0}$.  Then we have  $\mu\in P^+$. As  $\lambda\in\frac{1}{2}Q$ and $s_{i_0}\varpi_{i_0}\in\frac{1}{2}Q$, we also have  $\mu=\lambda- s_{i_0}\varpi_{i_0} \in \frac{1}{2}Q$ and hence $\mu\in M^+= \frac{1}{2}Q\cap P^+$. It follows that $\lambda$ is a sum of two nonzero elements  $\mu$ and $s_{i_0}\varpi_{i_0}$ of $M^+$, contrary to the irreducibility of $\lambda$.
\end{proof}

\begin{lemma}\cite[Lemma3.5]{LXZ16}\label{lem: Hilbtype}
 Let $\varpi_i$ be the fundamental weights of $\mathfrak{g}$.
\begin{enumerate}
 \item If  $\mathfrak{g}$ is of one of types $A_1$, $B_n(n\geq 2)$, $C_n(n\geq 3)$, $D_{2k}(k\geq 2)$, $E_7$, $E_8$, $F_4$, and $G_2$, we have  
\begin{align*}
{\rm Hilb}(M^+)=\{\varpi_1,\cdots,\varpi_n\}.
\end{align*}
 
 \item If  $\mathfrak{g}$ is of type $D_{2k+1}(k\geq 2)$, we have  
\begin{align*}\label{eqn: psimin D odd}
{\rm Hilb}(M^+)=\{\varpi_1,\cdots,\varpi_{n-2},2\varpi_{n-1},2\varpi_n,\varpi_{n-1}+\varpi_n\},
\end{align*}  
where $n=2k+1$.
 \item If $\mathfrak{g}$ is of type $E_6$, we have
 \begin{equation*}\label{eqn: psimin E6}\begin{split}
{\rm Hilb}(M^+)=&\{3\varpi_1,\varpi_2,3\varpi_3,\varpi_4,3\varpi_5,3\varpi_6,\varpi_1+\varpi_3,\varpi_1+\varpi_6,
\varpi_3+\varpi_5,\\
&\,\, \, \varpi_5+\varpi_6,\varpi_1+2\varpi_5,2\varpi_1+\varpi_5,\varpi_3+2\varpi_6,2\varpi_3+\varpi_6\}.
\end{split}
\end{equation*}
\end{enumerate}

\end{lemma}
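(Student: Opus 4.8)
The plan is to convert the lattice condition defining $M^+$ into a congruence on the coordinates $a_i$, using the fundamental group $P/Q$. The starting observation is that for $\lambda\in P^+$ one has $\lambda\in M^+=\frac12Q\cap P^+$ if and only if $2\lambda\in Q$, i.e.\ the image $\bar\lambda$ of $\lambda$ in the finite abelian group $P/Q$ lies in the $2$-torsion subgroup $(P/Q)[2]$. Thus $M^+$ is the intersection of $P^+$ with the preimage of $(P/Q)[2]$ under the projection $P\to P/Q$. I will read off $P/Q$ and compute the images $\bar\varpi_i$ from the explicit fundamental weights in \cite[\S13.2, Table 1]{Hum72}.

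For part (1), I would note that in each of the listed types the fundamental group is an elementary abelian $2$-group: it is $0$ for $E_8,F_4,G_2$, it is $\mathbb{Z}/2$ for $A_1,B_n,C_n,E_7$, and it is $(\mathbb{Z}/2)^2$ for $D_{2k}$. Hence $(P/Q)[2]=P/Q$, the congruence is vacuous, and $M=P$, so $M^+=P^+$. Since $P^+=\bigoplus_i\mathbb{Z}_{\ge0}\varpi_i$ is the free commutative monoid on the fundamental weights, its unique Hilbert basis is $\{\varpi_1,\dots,\varpi_n\}$, which gives (1).

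For parts (2) and (3) the group $P/Q$ is no longer $2$-torsion and a genuine congruence appears. For $D_{2k+1}$ one has $P/Q\cong\mathbb{Z}/4$ with generator $\bar\varpi_n$ and $(P/Q)[2]=\{0,2\bar\varpi_n\}$; computing images in the standard $\epsilon$-coordinates gives $\bar\varpi_i\in\{0,2\bar\varpi_n\}$ for $i\le n-2$ and $\bar\varpi_{n-1}=-\bar\varpi_n$, so the contributions of $a_1,\dots,a_{n-2}$ already lie in $(P/Q)[2]$ and the condition $\bar\lambda\in(P/Q)[2]$ collapses to $a_{n-1}\equiv a_n\pmod{2}$. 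For $E_6$, $P/Q\cong\mathbb{Z}/3$ has trivial $2$-torsion, so $M^+=Q\cap P^+$; computing the images (e.g.\ via the inverse Cartan matrix modulo $Q$) gives $\bar\varpi_2=\bar\varpi_4=0$, $\bar\varpi_1=\bar\varpi_5$, and $\bar\varpi_3=\bar\varpi_6=-\bar\varpi_1$ in $\mathbb{Z}/3$, whence $M^+=\{\sum a_i\varpi_i:a_1+a_5\equiv a_3+a_6\pmod{3}\}$. In each case I would then determine the Hilbert basis directly. The free coordinates ($\varpi_1,\dots,\varpi_{n-2}$ for $D$; $\varpi_2,\varpi_4$ for $E_6$) are automatically irreducible, while the interacting coordinates reduce to small affine monoids whose Hilbert bases are immediate: for $D$ the monoid $\{(a_{n-1},a_n):a_{n-1}\equiv a_n\pmod 2\}$ has basis $2\varpi_{n-1},\,2\varpi_n,\,\varpi_{n-1}+\varpi_n$, and for $E_6$ the rank-two congruence monoid $\{(a,b)\in\mathbb{Z}_{\ge0}^2:a+b\equiv0\pmod 3\}$ has basis $(3,0),(2,1),(1,2),(0,3)$. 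Assigning integer charge $+1$ to $\varpi_1,\varpi_5$ and $-1$ to $\varpi_3,\varpi_6$, a balancing argument — subtract opposite-charge pairs $\varpi_i+\varpi_j$ until one sign is exhausted, then split the remaining pure-charge part (whose coordinate sum is divisible by $3$) into the rank-two atoms — shows that these atoms together with the four pairs $\varpi_1+\varpi_3,\varpi_1+\varpi_6,\varpi_3+\varpi_5,\varpi_5+\varpi_6$ generate all of $M^+$; each listed element is irreducible because any decomposition in $P^+$ would involve some $\varpi_i$ or $2\varpi_i$ ($i\in\{1,3,5,6\}$) failing the congruence.

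The main obstacle is the combinatorial verification in the $E_6$ case that the fourteen listed elements are \emph{exactly} the irreducibles, requiring both irreducibility of each and the absence of further irreducibles. The bound $0\le a_i\le s_i$ from \lemref{lem: finiteness} (here $s_2=s_4=1$ and $s_i=3$ otherwise) confines the search to a finite box, and the charge grading organises it, but the careful bookkeeping needed to rule out spurious mixed-support irreducibles is the delicate point.
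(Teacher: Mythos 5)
Your proposal is correct but follows a genuinely different route from the paper. The paper's proof is computational: part (1) is a case-by-case check from \cite[\S 13.2, Table 1]{Hum72} that $M^+=P^+$; part (2) computes the minimal integers $s_i$ with $s_i\varpi_i\in\frac{1}{2}Q$, invokes the coefficient bound $a_i\leq s_i$ from \lemref{lem: finiteness}, and then tests directly which sums $\varpi_i+\varpi_j$ lie in $\frac{1}{2}Q$; part (3) is delegated to ``a suitable computer program''. You instead recast the defining condition $2\lambda\in Q$ as membership of $\bar\lambda$ in the $2$-torsion of the fundamental group $P/Q$, which immediately disposes of all of type \RN{1} (where $P/Q$ is elementary abelian of exponent $2$, so $M^+=P^+$ with no computation) and converts types $D_{2k+1}$ and $E_6$ into the explicit congruences $a_{n-1}\equiv a_n\pmod 2$ and $a_1+a_5\equiv a_3+a_6\pmod 3$; the monoid then splits as a free part times a small congruence monoid whose Hilbert basis you determine by hand. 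Your charge computations check out against the stated lists (e.g.\ $\bar\varpi_{n-1}=-\bar\varpi_n$ in $\mathbb{Z}/4$ for odd $D_n$, and the $\mathbb{Z}/3$-grading with $\bar\varpi_1=\bar\varpi_5=-\bar\varpi_3=-\bar\varpi_6$ for $E_6$), and the balancing argument is already complete as written: it shows the listed elements generate, so the Hilbert basis is contained among them, and irreducibility of each follows because any proper summand in $P^+$ would violate the congruence. The ``delicate bookkeeping'' you worry about at the end is therefore resolved by your own argument, and your proof has the merit of replacing the paper's computer verification for $E_6$ with a uniform conceptual one.
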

\begin{proof}
Part (1) can be checked case by case by using \cite[\S 13.2, Table 1]{Hum72}. One has $M^+= \frac{1}{2}Q \cap P^+= P^+ $ in these cases, and  hence ${\rm Hilb}(M^+)$ consists of all  fundamental weights. 

For part (2),  let  $s_i$ be  the smallest positive integer such that $s_i\varpi_i\in\frac{1}{2}Q$ for $1\leq i\leq n$. Using \cite[\S 13.2, Table 1]{Hum72}, it is easy to see that 
\[ s_i=1,\, 1\leq i\leq n-2, \quad \text{and $s_{n-1}=s_n=2$,} \]
where $n\geq 3$ is an odd integer. It follows that $\varpi_i\in {\rm Hilb}(M^+)$ for $1\leq i\leq n-2$ and also $2\varpi_{n-1}, 2\varpi_{n}\in {\rm Hilb}(M^+)$. 

Next we consider the irreducible element of ${\rm Hilb}(M^+)$ which can be written as a sum of fundamental weights. Assume that $\lambda=\sum_{i=1}^na_i\varpi_i \in {\rm Hilb}(M^+)$  is not a multiple of some fundamental weight. Then by the proof of \lemref{lem: finiteness},  we have $a_i\leq s_i$ for each $i$. It is verified readily that $\varpi_{n-1}+ \varpi_{n}\in \frac{1}{2}Q$, while 
$\varpi_i+ \varpi_j\not\in \frac{1}{2} Q$ for any $i\in \{1,\dots, n-2\}$ and $j\in \{n-1, n\}$. Therefore, the only irreducible element  which is a sum of fundamental weights is $\varpi_{n-1}+ \varpi_{n}$. This completes the proof. Part (3) can be done similarly with a suitable computer program
\end{proof}

Now we consider the case of type $A_n$ for  $n\geq 2$. The following lemma is useful.

\begin{lemma}\cite[Lemma 4.3]{LXZ16} \label{lem: McriteriaA}
   Let $\lambda=\sum_{i=1}^n a_i \varpi_i\in P^+$ be a dominant weight of the Lie algebra of type $A_n$. Then $\lambda\in M^+$ if and only if 
   $ \sum_{i=1}^n ia_i \in r_{n+1} \mathbb{Z}$,
   where $r_{n+1}= \frac{n+1}{{\rm gcd}(n+1,2)}$.
\end{lemma}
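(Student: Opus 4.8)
The weight $\lambda$ is dominant by hypothesis, so $\lambda\in M^+=\frac{1}{2}Q\cap P^+$ is equivalent to $\lambda\in\frac{1}{2}Q$, i.e.\ to $2\lambda\in Q$. Hence the task reduces to a congruence criterion for membership in the root lattice $Q$ of a weight written in the fundamental-weight basis. The plan is to extract this criterion from the standard identification $P/Q\cong\mathbb{Z}/(n+1)\mathbb{Z}$ for type $A_n$, and then to translate the condition $2\lambda\in Q$ into the divisibility statement of the lemma by an elementary computation involving $\gcd(n+1,2)$.

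First I would introduce the \emph{charge} homomorphism $\chi\colon P\to\mathbb{Z}/(n+1)\mathbb{Z}$ determined by $\chi(\varpi_i)=i\bmod(n+1)$ and $\mathbb{Z}$-linear extension, so that $\chi\big(\sum_i a_i\varpi_i\big)=\sum_i i\,a_i \bmod (n+1)$. The key point is that $\chi$ vanishes on $Q$, which I would check on the simple roots using the Cartan-matrix expression $\alpha_j=2\varpi_j-\varpi_{j-1}-\varpi_{j+1}$ (with the convention $\varpi_0=\varpi_{n+1}=0$): one gets $\chi(\alpha_j)=-(j-1)+2j-(j+1)=0$ for $1<j<n$, $\chi(\alpha_1)=2-2=0$, and $\chi(\alpha_n)=-(n-1)+2n=n+1\equiv 0$. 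Thus $\chi$ factors through an epimorphism $\bar\chi\colon P/Q\to\mathbb{Z}/(n+1)\mathbb{Z}$ (onto because $\chi(\varpi_1)=1$ generates). Since the index of connection for type $A_n$ is $[P:Q]=\det A=n+1$, both groups have order $n+1$, so $\bar\chi$ is an isomorphism and $\ker\chi=Q$. Consequently, for any $\mu=\sum_i a_i\varpi_i\in P$ we have $\mu\in Q$ if and only if $\sum_i i\,a_i\equiv 0\pmod{n+1}$.

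Applying this to $2\lambda=\sum_i 2a_i\varpi_i$, I conclude $2\lambda\in Q$ iff $2S\equiv 0\pmod{n+1}$, where $S=\sum_i i\,a_i$. It then remains to observe the elementary equivalence $(n+1)\mid 2S\iff r_{n+1}\mid S$, with $r_{n+1}=(n+1)/\gcd(n+1,2)$: writing $n+1=d\,r_{n+1}$ with $d=\gcd(n+1,2)\in\{1,2\}$, the claim is immediate in each case, since when $n+1$ is odd the coprimality $\gcd(n+1,2)=1$ forces $(n+1)\mid S$, while when $n+1$ is even the divisibility $(n+1)\mid 2S$ is literally $r_{n+1}\mid S$. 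Combining the three reductions yields $\lambda\in M^+\iff \sum_i i\,a_i\in r_{n+1}\mathbb{Z}$, as claimed.

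The only step carrying genuine content is the identification $\ker\chi=Q$; everything else is bookkeeping together with a two-line divisibility argument, so I do not anticipate a real obstacle. An alternative, equally short route would realise $P$ inside $\big(\bigoplus_{i=1}^{n+1}\mathbb{Z}\varepsilon_i\big)\otimes\mathbb{Q}$ via $\varpi_i=\varepsilon_1+\cdots+\varepsilon_i-\tfrac{i}{n+1}\sum_k\varepsilon_k$ and recover $\chi$ as the total-coordinate map modulo $n+1$; I would keep the intrinsic formulation above to avoid introducing $\varepsilon$-coordinates.
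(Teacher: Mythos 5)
Your proof is correct and follows essentially the same route as the paper's: both arguments reduce membership of $\lambda$ in $\tfrac{1}{2}Q$ to the class of $\lambda$ in $P/Q\cong\mathbb{Z}/(n+1)\mathbb{Z}$, represented by $\sum_i i a_i$, and then unwind the factor of $2$ via $\gcd(n+1,2)$. The only cosmetic difference is that you establish $\ker\chi=Q$ abstractly from the index of connection $[P:Q]=n+1$, whereas the paper obtains the same congruence by writing $\varpi_i\equiv i\varpi_1 \pmod Q$ and expanding $\varpi_1$ explicitly in the simple roots; the two verifications are equivalent.
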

\begin{proof}
   We need to show that $\lambda \in \frac{1}{2}Q$ if and only if the given condition holds.
   Note that in the case of type $A$ we have $\varpi_i=i\varpi_{1}-(\alpha_{i-1}+2\alpha_{i-2}+ \dots (i-1)\alpha_{i-1})$ for $2\leq i\leq n$. Then we have the expression
   \[
    \begin{aligned}
    \lambda&= a_1\varpi_1 + \sum_{i=2}^n a_i(i\varpi_{1}-(\alpha_{i-1}+2\alpha_{i-2}+ \dots (i-1)\alpha_{i-1}))\\
     &= (\sum_{i=1}^nia_i) \varpi_1 - \sum_{i=2}^n a_i(\alpha_{i-1}+2\alpha_{i-2}+ \dots (i-1)\alpha_{i-1}).
    \end{aligned}
     \]
    It follows that $\lambda\in \frac{1}{2}Q$ if and only if $(\sum_{i=1}^nia_i) \varpi_1 \in \frac{1}{2}Q$. Recall that $\varpi_1= \frac{n}{n+1}\alpha_1+\frac{1}{n+1}(\alpha_n+ 2\alpha_{n-1}+ \dots (n-1)\alpha_2)$. We have $(\sum_{i=1}^nia_i) \varpi_1 \in \frac{1}{2}Q$ if and only if $r_{n+1}= \frac{n+1}{{\rm gcd}(n+1,2)}$ divides  $ \sum_{i=1}^nia_i$.
\end{proof}

\begin{corollary}
  In the case of type $A_n$, assume that $\lambda=\sum_{i=1}^n a_i \varpi_i\in {\rm Hil}(M^+)$. Then we have $\sum_{i=1}^n a_i\leq r_{n+1}$, where $r_{n+1}= \frac{n+1}{{\rm gcd}(n+1,2)}$.
\end{corollary}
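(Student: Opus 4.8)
The plan is to argue by contradiction using a pigeonhole argument on partial sums, in the spirit of the proof of \lemref{lem: finiteness}: if $\sum_{i=1}^n a_i$ were too large, the divisibility criterion of \lemref{lem: McriteriaA} would let us split $\lambda$ into two nonzero summands of $M^+$, contradicting the irreducibility of $\lambda$.

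Concretely, I would regard $\lambda=\sum_{i=1}^n a_i\varpi_i$ as a multiset of fundamental weights containing $a_i$ copies of $\varpi_i$, and list the indices of these weights (with multiplicity) as a sequence $x_1,\dots,x_N\in\{1,\dots,n\}$, where $N=\sum_{i=1}^n a_i$. Under this identification the quantity $\sum_{i=1}^n i a_i$ appearing in \lemref{lem: McriteriaA} is precisely $x_1+\dots+x_N$, and more generally, for any sub-collection the membership test in $M^+$ reduces by \lemref{lem: McriteriaA} to checking that the corresponding partial weighted sum $\sum x_j$ is divisible by $r_{n+1}=\frac{n+1}{{\rm gcd}(n+1,2)}$.

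Now suppose for contradiction that $N>r_{n+1}$. Form the partial sums $P_0=0$ and $P_j=x_1+\dots+x_j$ for $1\leq j\leq N$, and consider the $N$ values $P_0,P_1,\dots,P_{N-1}$ modulo $r_{n+1}$. Since $N>r_{n+1}$, the pigeonhole principle yields indices $0\leq a<b\leq N-1$ with $P_a\equiv P_b\pmod{r_{n+1}}$. Then the sub-collection $T=\{x_{a+1},\dots,x_b\}$ has weighted sum $P_b-P_a$ divisible by $r_{n+1}$; it is nonempty since $a<b$ and proper since $b\leq N-1<N$. By \lemref{lem: McriteriaA} the weight $\mu$ assembled from $T$ lies in $M^+$, and since $\lambda\in M^+$ its complementary part $\nu=\lambda-\mu$ has weighted sum $\sum_{i=1}^n i a_i-(P_b-P_a)$ divisible by $r_{n+1}$, so $\nu\in M^+$ as well. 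Both $\mu$ and $\nu$ are nonzero dominant weights, whence $\lambda=\mu+\nu$ contradicts the irreducibility of $\lambda$. Therefore $\sum_{i=1}^n a_i\leq r_{n+1}$.

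The argument is routine once the set-up is fixed; the only point requiring care — the mild obstacle — is guaranteeing that the extracted subset $T$ is \emph{proper}, so that its complement is genuinely nonzero and the decomposition is nontrivial. This is why I restrict the pigeonhole to the $N$ values $P_0,\dots,P_{N-1}$, deliberately omitting $P_N$: this forces $b\leq N-1$, and hence the complementary collection $T^{c}$ is nonempty, producing an honest splitting $\lambda=\mu+\nu$ into two nonzero elements of $M^+$.
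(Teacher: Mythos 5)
Your proof is correct and takes essentially the same route as the paper: the paper builds a strictly increasing chain $\lambda_1\prec\lambda_2\prec\dots\prec\lambda_p=\lambda$ by adjoining one fundamental weight at a time and applies the pigeonhole principle to the residues $b_i\equiv\sum_j j a_{ij}\pmod{r_{n+1}}$, which is precisely your sequence of partial sums $P_j$, and then splits $\lambda$ using the divisibility criterion of \lemref{lem: McriteriaA} exactly as you do. The only (immaterial) difference is the bookkeeping of which endpoint of the chain is omitted from the pigeonhole to guarantee that the resulting decomposition is into two \emph{nonzero} summands.
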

\begin{proof}
 For convenience, we denote $\lambda$ by $(a_1, a_2, \dots, a_n)\in \mathbb{N}^n$, where $\mathbb{N}$ is the set of non-negative integers. Let $\prec$ be the lexicographical order on $\mathbb{N}^n$. Without loss of generality, we may assume $a_1\neq 0$. Then there is a strictly increasing sequence:
 \[ \lambda_1\prec \lambda_2\prec \dots \prec \lambda_p=\lambda,  \]
 where $p=\sum_{i=1}^n a_i$, $\lambda_1=(1, 0, \dots, 0)$, and $\lambda_{i+1}-\lambda_i=\varpi_j$ for some $j\geq i$. For any $\lambda_i=(a_{i1}, \dots, a_{in})$, we define 
 \[ b_i\equiv \sum_{j=1}^n j a_{ij} \mod r_{n+1}, \quad 1\leq i\leq p. \]
 Assume for contradiction that $p>r_{n+1}$. Then by the pigeonhole principle, there exists a pair $i_1<i_2$ such that $b_{i_1}=b_{i_2}$, which implies that $r_{n+1}$ divides $\sum_{j=1}^nj(a_{i_2j}-a_{i_1j})$. It follows from  \lemref{lem: McriteriaA} that $\lambda_{i_2}-\lambda_{i_1}\in M^+$, and hence $\lambda= (\lambda_{i_2}-\lambda_{i_1})+(\lambda-\lambda_{i_2}+\lambda_{i_1})$, contrary to the irreducibility of $\lambda$. 
\end{proof}

In accord with the previous lemmas, we split  simple Lie algebras into the following two types:
\begin{equation}\label{eq: type}
   \begin{aligned}
      &\text{Type \RN{1}: $A_1$, $B_n(n\geq 2)$, $C_n(n\geq 3)$, $D_{2k}(k\geq 2)$, $E_7$, $E_8$, $F_4$, and $G_2$},\\ 
      &\text{Type \RN{2}: $A_n(n\geq 2)$, $D_{2k+1}(k\geq 2)$ and $E_6$}.
   \end{aligned}
\end{equation}
For each Lie algebra $\mathfrak{g}$ of type \RN{1}, the Hilbert basis ${\rm Hilb}(M^+)$ comprises exactly all fundamental weights of $\mathfrak{g}$. For  type \RN{2}, we will focus on the symmetry property of $M^+$ derived from the involution of the corresponding Dynkin diagram. This is treated in the following subsection.

\begin{remark}
The classification \eqref{eq: type} is also in accord with the fact that $-1\in W$ if and only if $\mathfrak{g}$ is of type $\RN{1}$ (see, e.g., \cite[Chapter V, \S6.2, Corollary 3]{Bou68}). In this case, $-1$ is the longest element of $W$.
\end{remark}

\subsection{The involution of $M^+$}

For the purpose of this paper, we are only concerned with the involutions corresponding to Lie algebras of type \RN{2}.

  % As we will see in the next section, in these cases the centres of the quantum groups are not polynomial algebras, and we shall use  the non-trivial involutions to give complete relations of the generators of the centres. 
%\end{remark}

%To this end, we first need to study symmetric properties of the monoid $M^+$. 

\begin{lemma}\label{lem: symmertical}
Let $\mathfrak{g}$ be a simple Lie algebra of type \RN{2}, and let $\sigma$ be the automorphism of the Dynkin diagram given in \figref{fig: involution}.  Then we have the involution of $M^+$:
\begin{equation}\label{eq: involution}
   \sigma_{M^+}: M^+ \rightarrow M^+, \quad \lambda=\sum_{i=1}^n a_i\varpi_i\mapsto \overline{\lambda}=\sum_{i=1}^n a_{\sigma(i)}\varpi_i 
\end{equation}
such that $\sigma_{M^+}^2=1$.
 In particular, if $\lambda\in {\rm Hilb}(M^+)$, then $\overline{\lambda}\in {\rm Hilb}(M^+)$.
\end{lemma}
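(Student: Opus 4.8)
The plan is to realise the combinatorial map $\lambda\mapsto\overline{\lambda}$ as the restriction to $M^+$ of the linear automorphism of $\mathfrak{h}^*$ induced by the diagram automorphism $\sigma$, and then to exploit the fact that such an automorphism preserves all the relevant lattices. First I would introduce the linear map $w_\sigma\colon \mathfrak{h}^*\to\mathfrak{h}^*$ determined on the basis of simple roots by $w_\sigma(\alpha_i)=\alpha_{\sigma(i)}$. Since every Lie algebra of type $\RN{2}$ is simply-laced, all roots have equal length, and $\sigma$, being a symmetry of the Dynkin diagram, preserves the Cartan matrix, i.e. $a_{\sigma(i)\sigma(j)}=a_{ij}$; consequently $w_\sigma$ is an isometry of $(\mathfrak{h}^*,(-,-))$. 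From the defining relation $2(\varpi_i,\alpha_j)/(\alpha_j,\alpha_j)=\delta_{ij}$, together with the facts that $w_\sigma$ fixes the form and permutes the simple roots, I would deduce $w_\sigma(\varpi_i)=\varpi_{\sigma(i)}$. Comparing this with the coefficient formula in \eqref{eq: involution}, one checks $\overline{\varpi_i}=\varpi_{\sigma(i)}$ as well, using $\sigma^2=\mathrm{id}$; since both $w_\sigma$ and $\overline{(\cdot)}$ are additive and agree on the generators $\varpi_i$, they coincide on all of $P$, so $\overline{\lambda}=w_\sigma(\lambda)$.

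With this identification the first two assertions follow readily. Because $w_\sigma$ merely permutes the simple roots it maps $Q=\bigoplus_i\mathbb{Z}\alpha_i$ onto itself, hence maps $\tfrac12 Q$ onto itself; and because it permutes the fundamental weights it maps $P^+$ onto itself. Therefore $w_\sigma(M^+)=w_\sigma(\tfrac12 Q\cap P^+)=M^+$, which shows that $\sigma_{M^+}$ is a well-defined map $M^+\to M^+$. Since $\sigma$ is an involution of the diagram, $w_\sigma^2$ fixes each $\alpha_i$ and is thus the identity, giving $\sigma_{M^+}^2=1$.

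For the final claim I would use that $w_\sigma$, being linear, is in particular an additive monoid automorphism of $M^+$ fixing $0$. If $\lambda\in{\rm Hilb}(M^+)$ and $\overline{\lambda}=\mu+\nu$ with $\mu,\nu\in M^+$, then applying $\sigma_{M^+}$ and using $\sigma_{M^+}^2=1$ yields $\lambda=\overline{\mu}+\overline{\nu}$ with $\overline{\mu},\overline{\nu}\in M^+$; irreducibility of $\lambda$ forces one of $\overline{\mu},\overline{\nu}$ to vanish, hence one of $\mu,\nu$ vanishes, so $\overline{\lambda}$ is irreducible, i.e. $\overline{\lambda}\in{\rm Hilb}(M^+)$. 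I do not anticipate a serious obstacle; the only points demanding care are (i) the verification that $w_\sigma$ is an honest isometry with $w_\sigma(\varpi_i)=\varpi_{\sigma(i)}$, where it is essential that type $\RN{2}$ is simply-laced so that no rescaling of the form intervenes, and (ii) the reconciliation of the index bookkeeping between $\overline{(\cdot)}$ and $w_\sigma$, which is exactly where the involutivity of $\sigma$ is invoked.
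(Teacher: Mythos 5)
Your proof is correct and follows the same overall strategy as the paper: realise $\lambda\mapsto\overline{\lambda}$ as the restriction to $M^+$ of a linear automorphism of $\mathfrak{h}^*$ induced by the diagram automorphism, check that it permutes the fundamental weights according to $\sigma$, observe that it preserves $Q$, $\tfrac12Q$ and $P^+$, and transport irreducibility through the monoid automorphism. The one genuine difference lies in how the key identity $w_\sigma(\varpi_i)=\varpi_{\sigma(i)}$ is established: the paper verifies it by substituting the explicit expression of each $\varpi_i$ as a rational combination of simple roots and reindexing the sum (carried out only for type $A_n$, with the $D_{2k+1}$ and $E_6$ cases declared ``similar''), whereas you derive it abstractly from the facts that $\sigma$ preserves the Cartan matrix and that all type \RN{2} algebras are simply laced, so that $w_\sigma$ is an isometry and $w_\sigma(\varpi_i)$ satisfies the defining relations of $\varpi_{\sigma(i)}$. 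Your route buys uniformity across all three families and avoids the case-by-case computation; the paper's computation is more concrete but leaves two cases implicit. Your closing argument that irreducibility is preserved (apply $\sigma_{M^+}$ to a putative decomposition of $\overline{\lambda}$ and use $\sigma_{M^+}^2=1$) is exactly the content of the paper's one-line remark, spelled out. No gaps.
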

\begin{proof}
  We only give the proof for type $A$, and  the other cases can be treated similarly. By definition $\sigma$ sends the simple root $\alpha_i$ to $\alpha_{\sigma(i)}=\alpha_{n+1-i}$, and hence $\sigma$ gives rise to an involution $\psi$ of 
  the vector space $\mathbb{Q}Q:=\mathbb{Q} \otimes_{\mathbb{Z}} Q$ over $\mathbb{Q}$. In particular, $\psi$ restricts to an involution of   the half root lattice $\frac{1}{2}Q$.  On the other hand, recall that  the fundamental weights are given by
  \[ \varpi_i=\sum_{j=1}^{i}\frac{(n+1-i)j}{n+1}\alpha_j+\sum_{j=i+1}^{n}\frac{(n+1-j)i}{n+1}\alpha_j\in \mathbb{Q}Q, \quad 1\leq i\leq n.\]
  It is straightforward to check that 
  \[
  \begin{aligned}
   \psi(\varpi_i)&= \sum_{j=1}^{i}\frac{(n+1-i)j}{n+1}\alpha_{n+1-j}+\sum_{j=i+1}^{n}\frac{(n+1-j)i}{n+1}\alpha_{n+1-j}\\
    &= \sum_{k=1}^{n+1-i}\frac{ik}{n+1}\alpha_{k}+\sum_{k=n+2-i}^{n}\frac{(n+1-i)(n+1-k)}{n+1}\alpha_{k},\\
    &= \varpi_{n+i-1}.
  \end{aligned}
  \]
  It follows that $\psi$ induces an involution of the monoid $P^{+}$ with $\psi(\varpi_i)= \varpi_{n+1-i}$ for $1\leq i\leq n$. Therefore, the restriction $\sigma_{M^+}:=\psi|_{M^+}$ is an involution  satisfying
  \[ \sigma_{M^+}(\sum_{i=1}^n a_i\varpi_i)=\sum_{i=1}^n a_i\varpi_{\sigma(i)}=\sum_{i=1}^n  a_{\sigma(i)}\varpi_{i},  \]
  and $\sigma_{M^+}^2=1$. For the last assertion, it is clear that $\lambda$ is irreducible if and only if its image $\overline{\lambda}$ is irreducible.
\end{proof}

\begin{definition}
  The elements $\lambda\in M^+$ satisfying $\overline{\lambda}=\lambda$ are said to be self-conjugate; the other elements of $M^+$ are called non-self-conjugate.
\end{definition}

Now we can split the finite generating set ${\rm Hilb}(M^+)$ into two disjoint subsets, depending on whether they are self-conjugate.
In the following we figure out the self-conjugate elements of ${\rm Hilb}(M^+)$.

\begin{lemma}\label{lem: HilMsigma}
Let $\sigma$ be the involution of the Dynkin diagram given in \figref{fig: involution},  and  let $\lambda\in {\rm Hilb}(M^+)$. Then $ \lambda= \overline{\lambda}$ if and only if $\lambda$ is of the following form:
\begin{enumerate}
  \item $\mu_i= \varpi_i+ \varpi_{\sigma(i)}$ for all $i$ with $i<\sigma(i)$;
  \item $\mu_i= \varpi_i$ for all $i$ with  $\sigma(i)=i$.
\end{enumerate}
\end{lemma}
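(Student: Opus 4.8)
The plan is to reduce the statement to the explicit data already assembled and to organise the argument around a single \emph{peeling} principle. First note that, by \lemref{lem: symmertical}, a weight $\lambda=\sum_i a_i\varpi_i\in M^+$ satisfies $\overline{\lambda}=\lambda$ if and only if $a_i=a_{\sigma(i)}$ for every $i$; in particular each candidate $\mu_i$ in (1) and (2) is automatically self-conjugate because $\sigma$ is an involution. So the real content is twofold: (i) each $\mu_i$ actually lies in ${\rm Hilb}(M^+)$, and (ii) these exhaust the self-conjugate members of ${\rm Hilb}(M^+)$.

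The type-dependent inputs I will verify are two closure facts: (a) for every fixed node $i$ (i.e. $\sigma(i)=i$) one has $\varpi_i\in M^+$; and (b) for every pair $i<\sigma(i)$ one has $\mu_i=\varpi_i+\varpi_{\sigma(i)}\in M^+$ while $\varpi_i,\varpi_{\sigma(i)}\notin M^+$. For types $D_{2k+1}$ and $E_6$ these read off directly from the explicit lists in \lemref{lem: Hilbtype} (the fixed nodes are exactly those with $s_i=1$, the paired fundamental weights have $s_i>1$, and the relevant $\mu_i$ already appears in the list). For type $A_n$ they follow from the congruence criterion of \lemref{lem: McriteriaA}: with $r_{n+1}=\tfrac{n+1}{\gcd(n+1,2)}$, one checks $\sum_j j a_j=n+1\equiv 0\pmod{r_{n+1}}$ for $\mu_i=\varpi_i+\varpi_{n+1-i}$, and $\tfrac{n+1}{2}=r_{n+1}\equiv0$ for the unique fixed node $i_0=\tfrac{n+1}{2}$ when $n$ is odd, whereas $0<i<r_{n+1}$ forces $r_{n+1}\nmid i$ and (since $n+1\equiv0$) $r_{n+1}\nmid(n+1-i)$, so neither of the two fundamental weights in a pair lies in $M^+$.

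Fact (b) yields the \emph{if} direction immediately: $\mu_i=\varpi_i+\varpi_{\sigma(i)}$ can only be split inside the dominant monoid $P^+$ as $\varpi_i+\varpi_{\sigma(i)}$, and neither summand lies in $M^+$, so $\mu_i$ is irreducible in $M^+$; a single fixed fundamental weight $\varpi_i$ is irreducible already in $P^+\supseteq M^+$. For the \emph{only if} direction I peel: let $\lambda$ be a self-conjugate element of ${\rm Hilb}(M^+)$. As $\lambda\neq0$, either $a_i\geq1$ for some fixed node $i$, or $a_i=a_{\sigma(i)}\geq1$ for some pair. In the first case $\lambda-\varpi_i$ still lies in $P^+$ (since $a_i\geq1$) and in $\tfrac12 Q$ (since both $\lambda$ and $\varpi_i$ do, by (a)), hence in $M^+$; irreducibility of $\lambda$ forces $\lambda-\varpi_i=0$, i.e. $\lambda=\varpi_i$. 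In the second case the identical argument with $\mu_i$ in place of $\varpi_i$, using (b), forces $\lambda=\mu_i$. This exhausts the cases and matches the two forms in the statement.

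The routine part is the type-by-type verification of (a) and (b); the only point requiring care is that irreducibility is taken in the full monoid $M^+$ rather than in the submonoid of self-conjugate weights, which is exactly why the peeling step must subtract a generator ($\varpi_i$ or $\mu_i$) that is \emph{itself} self-conjugate and lies in $M^+$, so that the remainder stays in $\tfrac12 Q\cap P^+$. I expect the only mild obstacle to be the type $A_n$ bookkeeping with the congruence $\pmod{r_{n+1}}$ and the parity of $n$ (whether a fixed node exists at all); the cases $D_{2k+1}$ and $E_6$ are immediate inspection of \lemref{lem: Hilbtype}.
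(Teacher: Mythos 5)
Your proposal is correct and follows essentially the same route as the paper: both arguments reduce the lemma to checking, via \lemref{lem: McriteriaA} for type $A$ and \lemref{lem: Hilbtype} for $D_{2k+1}$ and $E_6$, that each candidate $\mu_i$ lies in $M^+$ while the individual fundamental weights in a non-trivial pair do not, and then invoke irreducibility of $\lambda$ to force $\lambda$ to coincide with a single $\mu_i$. Your ``peeling'' of one self-conjugate generator is just a rephrasing of the paper's decomposition $\lambda=\sum_{i<\sigma(i)}a_i(\varpi_i+\varpi_{\sigma(i)})+\sum_{\sigma(i)=i}a_i\varpi_i$.
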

\begin{proof}

Assume that $\lambda=\sum_{i=1}^n a_i\varpi_i\in {\rm Hilb}(M^+)$ satisfies $\overline{\lambda}= \lambda$. Then we have $a_i=  a_{\sigma(i)}$ for $1\leq i\leq n$. It follows that 
    \[ \lambda= \sum_{i:i<\sigma(i)} a_i ( \varpi_i+\varpi_{\sigma(i)} ) + \sum_{i: \sigma(i)=i} a_i \varpi_{i}. \]
It suffices to show that $\varpi_i+\varpi_{\sigma(i)} \in {\rm Hilb}(M^+)$ whenever $i<\sigma(i)$ and $\varpi_{i}\in {\rm Hilb}(M^+)$ whenever $\sigma(i)=i$.

We only do it for type $A$, and the other two cases can be treated similarly. For $\mathfrak{g}$ of type $A_n$, we first claim that $\varpi_i\notin M^+$ and hence $\varpi_i\notin {\rm Hilb}(M^+)$ whenever $i< \sigma(i)$. By definition (refer to \figref{fig: involution}) $i< \sigma(i)$ if and only if either $n$ is even or $n$ is odd and $i\neq \frac{n+1}{2}$. Using \lemref{lem: McriteriaA}, we obtain that $\varpi_i \notin M^+$ for $1\leq i\leq n$ if $n$ is even, and $\varpi_i \notin M^+$ for $i\neq \frac{n+1}{2} $ if $n$ is odd. Thus our claim follows.  
Secondly, by \lemref{lem: McriteriaA} we have  $ \varpi_i+ \varpi_{\sigma(i)}\in M^+$ for all $i$ with $i< \sigma(i)$. Moreover, by our claim  $ \varpi_i+ \varpi_{\sigma(i)}$ is irreducible and hence $ \varpi_i+ \varpi_{\sigma(i)}\in {\rm Hilb}(M^+)$.  It remains to deal with the case $\sigma(i)=i$. This happens if and only if  $n$ is odd and $i=\frac{n+1}{2}$. In this case, $\varpi_{\frac{n+1}{2}}\in {\rm Hilb}(M^+)$   by \lemref{lem: McriteriaA}. 
%It is clear that $\varpi_{\frac{n+1}{2}}$ is irreducible.
\end{proof}

\begin{example}
  Using \lemref{lem: HilMsigma}, we can write out all self-conjugate elements $\mu_i$ of ${\rm Hilb}(M^+)$ explicitly as follows (the indices of fundamental weights $\varpi_i$ are in accord with the labellings of Dynkin diagrams  \figref{fig: involution}).
\begin{enumerate}
   \item Type $A_n (n\geq 2)$. If $n$ is even,  we have
   \[\mu_i:= \varpi_i+ \varpi_{n+1-i},\quad    1\leq  i\leq  \frac{n}{2}.\]
    If $n$ is odd,  we have 
       \[
         \begin{aligned}
          &\mu_i: = \varpi_i+ \varpi_{n+1-i}, \quad  1\leq i \leq \frac{n+1}{2}-1, \\
           &\mu_{\frac{n+1}{2}}:= \varpi_{\frac{n+1}{2}}. 
         \end{aligned}   
       \]
   \item Type $D_{2k+1}(k\geq 2)$.   We have
   \[ \mu_i:= \varpi_i, 1 \leq i\leq n-2, \quad \mu_{n-1}:=\varpi_{n-1}+\varpi_n, \]
   where $n=2k+1$. This can also be verified directly by \lemref{lem: Hilbtype}.

   \item Type $E_6$. We have 
   \[\mu_1:= \varpi_1+\varpi_6,\, \mu_2=\varpi_2,\,  \mu_3:= \varpi_3+\varpi_5,\,  \mu_4=\varpi_4. \] 
\end{enumerate}
\end{example}

For the non-self-conjugate elements of ${\rm Hilb}(M^+)$, we have the following.

\begin{lemma}\label{lem: HilM}
  Let $\lambda=\sum_i^n a_i \varpi_i\in {\rm Hilb}(M^+)$. For each $i=1, \dots, n$, we have
 \begin{enumerate}
   \item If $i\neq \sigma(i)$, then either $a_i a_{\sigma(i)}=0$ or $a_i= a_{\sigma(i)}=1$. In the latter case, all other $a_i$ are $0$, i.e. $\lambda= w_i+ w_{\sigma(i)}$. 
   \item If $i=\sigma(i)$, then either $a_i=0$ or $a_i=1$. In the latter case,  all other $a_i$ are $0$, i.e. $\lambda=\varpi_i$.
 \end{enumerate}
 Therefore, for any non-self-conjugate element $\lambda\in {\rm Hilb}(M^+)$, we have $a_ia_{\sigma(i)}=0$ for $i\neq \sigma(i)$,  and $a_i=0$ for $i=\sigma(i)$.
\end{lemma}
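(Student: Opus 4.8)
The plan is to prove the dichotomy stated in \lemref{lem: HilM} by combining the general bound on coefficients from \lemref{lem: finiteness} with the explicit Hilbert bases already computed in \lemref{lem: Hilbtype} and the corollary following \lemref{lem: McriteriaA}. First I would recall that by \lemref{lem: finiteness} any $\lambda=\sum_i a_i\varpi_i\in{\rm Hilb}(M^+)$ satisfies $0\leq a_i\leq s_i$, where $s_i$ is the minimal positive integer with $s_i\varpi_i\in\frac{1}{2}Q$. This immediately constrains the size of the coefficients, and the key point is that for the vertices involved in a non-fixed pair $\{i,\sigma(i)\}$ the relevant $s_i$ are small. I would handle the three families of type \RN{2} separately, since the verification is ultimately a finite calculation, but organised so that the structural reason is visible.

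\smallskip

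For type $A_n$, I would argue as follows. If $i\neq\sigma(i)$ and both $a_i,a_{\sigma(i)}>0$, I want to force $a_i=a_{\sigma(i)}=1$ and all other coefficients to vanish. The mechanism is the same pigeonhole argument used in the corollary after \lemref{lem: McriteriaA}: whenever $\lambda$ has two or more "active" coordinates in a way that is not minimal, one can split off a proper subsum lying in $M^+$ via the congruence criterion $\sum_j ja_j\in r_{n+1}\mathbb{Z}$ of \lemref{lem: McriteriaA}, contradicting irreducibility. Concretely, since $\varpi_i\notin M^+$ but $\varpi_i+\varpi_{\sigma(i)}\in M^+$ (as shown inside the proof of \lemref{lem: HilMsigma}), any irreducible element containing $\varpi_i+\varpi_{\sigma(i)}$ as a summand must equal it. For the fixed vertex case $i=\sigma(i)$ (which occurs only when $n$ is odd and $i=\tfrac{n+1}{2}$), one has $\varpi_i\in M^+$ by \lemref{lem: McriteriaA}, so $\varpi_i$ is itself irreducible and no larger irreducible element can contain it. For types $D_{2k+1}$ and $E_6$ I would simply read off the conclusion from the explicit lists in \lemref{lem: Hilbtype}: one checks member by member that every non-self-conjugate element of the Hilbert basis has $a_ia_{\sigma(i)}=0$ for $i\neq\sigma(i)$ and $a_i=0$ for $i=\sigma(i)$, the self-conjugate members having already been separated out in \lemref{lem: HilMsigma}.

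\smallskip

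The final sentence of the statement is then a formal consequence: an element $\lambda$ is non-self-conjugate precisely when it is \emph{not} of the form described in \lemref{lem: HilMsigma}, so by the per-coordinate dichotomy of parts (1) and (2) the surviving possibility is exactly $a_ia_{\sigma(i)}=0$ for each swapped pair and $a_i=0$ at each fixed vertex. I would note that the cases $a_i=a_{\sigma(i)}=1$ and $a_i=1$ with $i=\sigma(i)$ give self-conjugate elements and are therefore excluded.

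\smallskip

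The main obstacle I anticipate is making the type $A_n$ argument genuinely uniform rather than a restatement of the corollary's pigeonhole count. The subtlety is that the coefficient bound $a_i\leq s_i$ alone does not rule out, say, $\lambda=\varpi_i+\varpi_{\sigma(i)}+\varpi_k$ with $k$ a fixed vertex; one must use the congruence criterion of \lemref{lem: McriteriaA} to show such a sum decomposes, and one must verify that $\varpi_i+\varpi_{\sigma(i)}$ is genuinely irreducible (not further splittable) to conclude it is the whole of $\lambda$. Keeping the bookkeeping of $\sum_j ja_j \bmod r_{n+1}$ correct across the two parity regimes of $n$ is where care is needed, but it is the direct analogue of the argument already carried out for \lemref{lem: HilMsigma}.
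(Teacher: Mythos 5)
Your proposal is correct, and for type $A_n$ its core mechanism is exactly the paper's: once one knows $\varpi_i+\varpi_{\sigma(i)}\in M^+$ (for $i\neq\sigma(i)$) and $\varpi_i\in M^+$ (for $i=\sigma(i)$) from \lemref{lem: HilMsigma}, any $\lambda$ with $a_i,a_{\sigma(i)}\geq 1$ decomposes as $(\varpi_i+\varpi_{\sigma(i)})+\bigl(\lambda-\varpi_i-\varpi_{\sigma(i)}\bigr)$ and irreducibility forces the remainder to vanish. The difference is that the paper runs this one splitting argument uniformly for all three families of type \RN{2} (it never touches the explicit lists of \lemref{lem: Hilbtype} here), whereas you fall back on member-by-member verification of the computed Hilbert bases for $D_{2k+1}$ and $E_6$; that is perfectly valid but less economical, and it obscures the fact that nothing beyond \lemref{lem: HilMsigma} is needed. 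I would also point out that the ``main obstacle'' you anticipate is not actually there: to split off $\varpi_i+\varpi_{\sigma(i)}$ from, say, $\varpi_i+\varpi_{\sigma(i)}+\varpi_k$ you do not need the congruence criterion of \lemref{lem: McriteriaA} again, because $\frac{1}{2}Q$ is closed under differences and the remaining coefficients stay nonnegative, so the remainder lies in $M^+=\frac{1}{2}Q\cap P^+$ automatically; the pigeonhole bookkeeping modulo $r_{n+1}$ plays no role in this lemma. Likewise, the irreducibility of $\varpi_i+\varpi_{\sigma(i)}$ itself is not something you need to re-verify, since \lemref{lem: HilMsigma} already places it in ${\rm Hilb}(M^+)$.
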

\begin{proof}
  For part (1), we assume that $i\neq \sigma(i)$. If $a_i=a_{\sigma(i)}$, then we may write
   \[ \lambda= a_i(\varpi_i+ \varpi_{\sigma(i)}) + \sum_{j: j\neq i,\sigma(i)} a_j\varpi_j. \]
  It follows from \lemref{lem: HilMsigma} that $\varpi_i+ \varpi_{\sigma(i)}\in {\rm Hilb}(M^+)$. By the irreducibility of $\lambda$, we have $a_i=a_{\sigma(i)}=1$ and all other $a_i$ are $0$, i.e. $\lambda= w_i+ w_{\sigma(i)}$. If $a_i\neq a_{\sigma(i)}$, we need to prove that one of $a_i$ and $a_{\sigma(i)}$ is $0$, that is, $a_i a_{\sigma(i)}=0$. Assume for contradiction that $0<a_i<a_{\sigma(i)}$. Then we may write
  \[
   \lambda=a_i(\varpi_i+ \varpi_{\sigma(i)})+ (a_{\sigma(i)}-a_i) \varpi_{\sigma(i)}+ \sum_{j: j\neq i, \sigma(i)} a_j \varpi_{j}.
  \]
  By the irreducibility of $\lambda$ we have $a_i=a_{\sigma(i)}=1$, which is a contradiction. 

Part (2) can be proved similarly, by using the fact from \lemref{lem: HilMsigma} that $w_i\in {\rm Hilb}(M^+)$ if $i=\sigma(i)$.
\end{proof}

%\begin{corollary}\label{cor: HilM}
% Maintain the notation above.  
%\end{corollary}
%\begin{proof}
% This is an easy consequence of \lemref{lem: HilMsigma} and \lemref{lem: HilM}. 
%\end{proof}

\begin{lemma}\label{lem: rel1}
For any non-self-conjugate element $\lambda= \sum_{i=1}^n a_i\varpi_i \in {\rm Hilb}(M^+)$, we have the relation
\[ \lambda+ \overline{\lambda}= \sum_{i: i<\sigma(i)} {\rm max}\{a_i, a_{\sigma(i)}\}\, \mu_i.  \] 
\end{lemma}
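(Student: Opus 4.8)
The plan is to compute both sides directly using the explicit description of non-self-conjugate elements provided by \lemref{lem: HilM} and the formula for the self-conjugate generators $\mu_i$ from \lemref{lem: HilMsigma}. First I would fix a non-self-conjugate $\lambda=\sum_{i=1}^n a_i\varpi_i\in {\rm Hilb}(M^+)$. By \lemref{lem: symmertical} its conjugate is $\overline{\lambda}=\sum_{i=1}^n a_{\sigma(i)}\varpi_i$, so that
\[
\lambda+\overline{\lambda}=\sum_{i=1}^n (a_i+a_{\sigma(i)})\,\varpi_i.
\]
The key structural input is the last assertion of \lemref{lem: HilM}: for a non-self-conjugate $\lambda$ we have $a_i a_{\sigma(i)}=0$ whenever $i\neq\sigma(i)$, and $a_i=0$ whenever $i=\sigma(i)$. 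Thus the only indices contributing are those lying in genuine two-element orbits $\{i,\sigma(i)\}$ with $i\neq\sigma(i)$, and within each such orbit at most one of $a_i,a_{\sigma(i)}$ is nonzero.

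Next I would reorganise the sum by grouping the indices into the $\sigma$-orbits. Pairing the $\varpi_i$ and $\varpi_{\sigma(i)}$ terms for each orbit with $i<\sigma(i)$, and using that the fixed points contribute nothing, gives
\[
\lambda+\overline{\lambda}=\sum_{i:\,i<\sigma(i)}\Bigl((a_i+a_{\sigma(i)})\varpi_i+(a_{\sigma(i)}+a_i)\varpi_{\sigma(i)}\Bigr)
=\sum_{i:\,i<\sigma(i)} (a_i+a_{\sigma(i)})\,(\varpi_i+\varpi_{\sigma(i)}).
\]
Since $\mu_i=\varpi_i+\varpi_{\sigma(i)}$ for each $i$ with $i<\sigma(i)$ by \lemref{lem: HilMsigma}, the right-hand side is $\sum_{i:\,i<\sigma(i)}(a_i+a_{\sigma(i)})\mu_i$. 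Finally, because $a_i a_{\sigma(i)}=0$ for each such orbit, one of the two coefficients vanishes, so $a_i+a_{\sigma(i)}=\max\{a_i,a_{\sigma(i)}\}$. Substituting yields
\[
\lambda+\overline{\lambda}=\sum_{i:\,i<\sigma(i)}{\rm max}\{a_i,a_{\sigma(i)}\}\,\mu_i,
\]
as claimed.

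I do not expect a genuine obstacle here; the lemma is essentially a bookkeeping consequence of \lemref{lem: HilM} and \lemref{lem: HilMsigma}. The only point requiring a moment's care is the passage $a_i+a_{\sigma(i)}=\max\{a_i,a_{\sigma(i)}\}$, which is exactly where the constraint $a_ia_{\sigma(i)}=0$ from \lemref{lem: HilM} is used and where the distinction between $\lambda+\overline{\lambda}$ (an additive identity in $M^+$) and the later multiplicative relation $C_{T(\lambda)}C_{T(\overline{\lambda})}=\prod_i C_{T(\mu_i)}^{\max\{a_i,a_{\sigma(i)}\}}$ of \thmref{thm: cen} becomes visible; I would make sure to invoke \lemref{lem: HilM} explicitly at this step rather than leave it implicit.
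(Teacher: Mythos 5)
Your proposal is correct and follows essentially the same route as the paper: both reduce the identity to the constraints $a_i a_{\sigma(i)}=0$ (for $i\neq\sigma(i)$) and $a_i=0$ (for $i=\sigma(i)$) from \lemref{lem: HilM}, group the sum over $\sigma$-orbits into the $\mu_i$ of \lemref{lem: HilMsigma}, and replace $a_i+a_{\sigma(i)}$ by $\max\{a_i,a_{\sigma(i)}\}$ because one of the two coefficients vanishes. The only cosmetic difference is that the paper first invokes $\sigma_{M^+}$-invariance of $\lambda+\overline{\lambda}$ to justify expressibility in the $\mu_i$, whereas you carry out the orbit-by-orbit regrouping explicitly.
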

\begin{proof}
  As $\lambda+ \overline{\lambda}$ is fixed by $\sigma_{M^+}$, it can be expressed linearly by elements $\mu_i$  given in  \lemref{lem: HilMsigma}. On the other hand, since $\overline{\lambda}\neq \lambda$ we have $a_i=0$ for $i=\sigma(i)$ and   $a_ia_{\sigma(i)}=0$ for $i\neq \sigma(i)$ by \lemref{lem: HilM}. It follows that 
  \[ \lambda+\overline{\lambda}= \sum_{i: i<\sigma(i)} (a_i+a_{\sigma(i)})\, \mu_i= \sum_{i: i< \sigma(i)} {\rm max}\{a_i, a_{\sigma(i)}\}\, \mu_i,  \]
  where the second equation holds since one of $a_i$ and $a_{\sigma(i)}$ is zero.  
\end{proof}

\begin{example}\label{exam: conj}
 For convenience, we denote $\lambda=\sum_{i}^n a_i \varpi_i$ by $(a_1, a_2, \dots, a_n)$. Then in type $A$ case $\overline{\lambda}=(a_{n}, a_{n-1}, \dots, a_1)$. 
\begin{enumerate}
  \item Type $A_2$. Self-conjugate:  $(1,1)$,  and non-self-conjugate: $(3,0)$ and $(0,3)$.
  \item Type $A_3$. Self-conjugate: $(1,0,1)$, $(0,1,0)$, and non-self-conjugate: $(2,0,0)$, $(0,0,2)$.
  \item Type $A_4$. Self-conjugate: $(1,0,0,1)$, $(0,1,1,0)$, and non-self-conjugate:
  \[ 
   \begin{aligned}
     &(5,0,0,0), (0,5,0,0), (2,0,1,0), (1,2,0,0), (3,1,0,0), (1,0,3,0),\\
     &(0,0,0,5), (0,0,5,0), (0,1,0,2), (0,0,2,1), (0,0,1,3), (0,3,0,1).
    \end{aligned}
\]
\end{enumerate}
\end{example}

We proceed to explore other relations among elements of ${\rm Hilb}(M^+)$. Note that for each fundamental weight $\varpi_i$ there exists a minimal positive integer $s_i$ such that $s_i\varpi_i\in M^+$. Since $s_i$ is minimal, we have  $s_i\varpi_i\in {\rm Hilb}(M^+)$. 
Therefore, we may form a sequence $(s_1, s_2, \dots, s_n)$, which is determined for each Lie algebra of type $\RN{2}$ as follows (note that for type $\RN{1}$ all $s_i$ are equal to $1$). 

\begin{lemma}\label{lem: seqint}
Let $(s_1,s_2, \dots, s_n)$ be a sequence of minimal positive integers such that $s_i\varpi_i\in {\rm Hilb}(M^+)$ for each $i$. Then
\begin{enumerate}
 \item For type $A_n(n\geq 2)$, we have
  \[ s_{i}=s_{n+1-i}=\frac{n+1}{{\rm gcd}(n+1,2i)}, \quad 1\leq i\leq n.\] 
  \item For type $D_{2k+1}(k\geq 2)$, we have 
      \[s_1=\dots=s_{n-1}=1,\quad  s_{n-1}=s_n=2.\]
 \item For type $E_6$, we have 
   \[ s_1=s_3=s_5=s_6=3, \quad s_2=s_4=1.\]     
\end{enumerate}
Recalling that $\sigma$ is the involution of the Dynkin diagram given in \figref{fig: involution},  we have $s_{i}=s_{\sigma(i)}$ for all $i$. In particular, $s_i=s_{\sigma(i)}=1$ if and only if $i=\sigma(i)$.
\end{lemma}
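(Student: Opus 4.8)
The plan is to verify the three explicit sequences $(s_1, \dots, s_n)$ case by case, using the characterisation of when $s_i \varpi_i \in \frac{1}{2}Q$ provided by the fundamental weight expansions in \cite[\S 13.2, Table 1]{Hum72}, and then deduce the symmetry $s_i = s_{\sigma(i)}$ as a formal consequence. For type $A_n$, \lemref{lem: McriteriaA} applied to $\lambda = s_i \varpi_i$ tells us that $s_i \varpi_i \in M^+$ if and only if $r_{n+1} \mid s_i \cdot i$, where $r_{n+1} = \frac{n+1}{\gcd(n+1, 2)}$. First I would observe that the minimal such positive $s_i$ is exactly $s_i = \frac{r_{n+1}}{\gcd(r_{n+1}, i)}$. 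A short computation then shows $\frac{r_{n+1}}{\gcd(r_{n+1}, i)} = \frac{n+1}{\gcd(n+1, 2i)}$: indeed, writing $d = \gcd(n+1,2)$, one checks $\gcd(n+1, 2i) = d \cdot \gcd(r_{n+1}, i)$ by splitting into the cases $n+1$ odd (so $d=1$, $r_{n+1} = n+1$, and $\gcd(n+1, 2i) = \gcd(n+1, i)$ since $n+1$ is coprime to $2$) and $n+1$ even (so $d = 2$, $r_{n+1} = \frac{n+1}{2}$), yielding the stated formula.

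For type $D_{2k+1}$ and type $E_6$, the values of $s_i$ have in fact already been recorded: for $D_{2k+1}$ they appear verbatim in the proof of \lemref{lem: Hilbtype}(2) (namely $s_i = 1$ for $1 \leq i \leq n-2$ and $s_{n-1} = s_n = 2$), and for $E_6$ they can be read off from the Hilbert basis listed in \lemref{lem: Hilbtype}(3), since $s_i \varpi_i$ is the unique scalar multiple of $\varpi_i$ lying in ${\rm Hilb}(M^+)$. Thus I would simply cite \lemref{lem: Hilbtype} together with the tables in \cite[\S 13.2, Table 1]{Hum72} for these two cases, noting that the minimality of each $s_i$ is exactly the statement that $s_i \varpi_i$ is irreducible, which holds because no smaller multiple lies in $\frac{1}{2}Q$.

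Finally, for the symmetry claim $s_i = s_{\sigma(i)}$, I would argue uniformly rather than case by case. By \lemref{lem: symmertical} the involution $\sigma_{M^+}$ sends $\varpi_i \mapsto \varpi_{\sigma(i)}$ and restricts to a monoid automorphism of $M^+$; since it maps $\frac{1}{2}Q$ to itself, we have $m\varpi_i \in M^+$ if and only if $m\varpi_{\sigma(i)} = \sigma_{M^+}(m\varpi_i) \in M^+$, so the minimal such $m$ agree, giving $s_i = s_{\sigma(i)}$. For the last sentence, if $i = \sigma(i)$ then $\varpi_i$ is self-conjugate, and by \lemref{lem: HilMsigma}(2) we have $\varpi_i \in {\rm Hilb}(M^+)$, forcing $s_i = 1$; conversely if $s_i = s_{\sigma(i)} = 1$ then both $\varpi_i$ and $\varpi_{\sigma(i)}$ lie in $M^+$, and inspecting the explicit formulas (or using that $\lambda \in \frac{1}{2}Q$ together with $\sigma_{M^+}(\varpi_i) = \varpi_{\sigma(i)}$) shows this occurs precisely at the fixed point $i = \sigma(i)$. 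I expect the only genuinely delicate step to be the gcd manipulation $\frac{r_{n+1}}{\gcd(r_{n+1}, i)} = \frac{n+1}{\gcd(n+1, 2i)}$ in type $A_n$; everything else reduces to bookkeeping against \lemref{lem: Hilbtype} and the tables in \cite{Hum72}.
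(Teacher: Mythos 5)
Your proposal is correct and follows essentially the same route as the paper: for type $A_n$ it derives $s_i = r_{n+1}/\gcd(r_{n+1},i) = (n+1)/\gcd(n+1,2i)$ from \lemref{lem: McriteriaA} and minimality, and for types $D_{2k+1}$ and $E_6$ it reads the values off from \lemref{lem: Hilbtype}. Your additional details --- the explicit gcd manipulation (which checks out in both parities of $n+1$) and the uniform argument via $\sigma_{M^+}$ for $s_i = s_{\sigma(i)}$ --- merely fill in steps the paper leaves implicit.
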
 

\begin{proof}
In the case of type $A_n(n\geq 2)$, recall from \lemref{lem: McriteriaA} that $s_i\varpi_i\in M^+$ if and only if $r_{n+1}|is_i$, where $r_{n+1}=(n+1)/{\rm gcd}(n+1,2)$. By the minimality we have $s_i=r_{n+1}/{\rm gcd}(r_{n+1},i)=(n+1)/{\rm gcd}(n+1,2i)$.  The other two cases can be verified directly by \lemref{lem: Hilbtype}. 
\end{proof}

For any $\lambda=\sum_{i=1}^n a_i \varpi_i\in M^+$  we may define
\begin{equation}\label{eq: mla}
  \ell(\lambda):={\rm lcm}\{s_i\mid 1\leq i\leq n\,\,  \text{and}\,\, a_{i}\neq 0 \},
\end{equation}
i.e., $\ell(\lambda)$ is the least common multiple of $s_i$ for all $i$ for which $a_i$ is nonzero. The following lemma is trivial.

\begin{lemma}\label{lem: rel2}
  Let $(s_1, \dots, s_n)$ be the sequence as defined in \lemref{lem: seqint}, and let $\nu_i=s_i\varpi_i$ for $1\leq i\leq n$.
  For any $\lambda=\sum_{i=1}^na_i\varpi_i\in M^+$, we have the relation
  \[ \ell(\lambda)\, \lambda= \sum_{i=1}^{n} \frac{\ell(\lambda)a_i}{s_i} \nu_i, \]
  i.e., $\ell(\lambda)\lambda$ is an integer combination of $\nu_i$.
\end{lemma}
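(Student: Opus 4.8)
The plan is to prove the identity by direct expansion in the rational vector space $\mathbb{Q}Q$ and then to check that the resulting coefficients are integers. Starting from $\lambda=\sum_{i=1}^n a_i\varpi_i$ and multiplying through by $\ell(\lambda)$, I would repackage each fundamental weight $\varpi_i$ into $\nu_i=s_i\varpi_i$, writing
\[
\ell(\lambda)\,\lambda \;=\; \sum_{i=1}^n \ell(\lambda)a_i\,\varpi_i \;=\; \sum_{i=1}^n \frac{\ell(\lambda)a_i}{s_i}\,(s_i\varpi_i) \;=\; \sum_{i=1}^n \frac{\ell(\lambda)a_i}{s_i}\,\nu_i .
\]
This is a formal identity in $\mathbb{Q}Q$ and holds with no input beyond the definitions. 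The only substantive point is to verify that each coefficient $\ell(\lambda)a_i/s_i$ is a non-negative integer, so that $\ell(\lambda)\lambda$ is a genuine $\mathbb{N}$-combination of the $\nu_i$ rather than merely a rational one.

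For the integrality I would split on whether $a_i$ vanishes. When $a_i=0$ the coefficient is $0$, which is an integer (here one uses only that $s_i>0$, as guaranteed by \lemref{lem: seqint}). When $a_i\neq 0$, the index $i$ lies in the set over which the least common multiple in \eqnref{eq: mla} is formed, so $s_i\mid \ell(\lambda)$; hence $\ell(\lambda)/s_i\in\mathbb{Z}_{>0}$ and therefore $\ell(\lambda)a_i/s_i\in\mathbb{Z}_{\geq 0}$. This disposes of all cases and completes the argument; the degenerate case $\lambda=0$ is vacuous under the convention that an empty least common multiple equals $1$.

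Since the whole matter reduces to the defining divisibility property of the least common multiple, there is no genuine obstacle here: the lemma is a bookkeeping identity, in agreement with the paper's remark that it is trivial. I would emphasise instead its purpose, which is to feed the second family of relations in \thmref{thm: cen}: once the monoid-algebra isomorphism $Z(\mathrm{U}_q(\mathfrak{g}))\cong \mathbb{C}(q)[M^+]$ is available, the additive relation $\ell(\lambda)\lambda=\sum_{i=1}^n (\ell(\lambda)a_i/s_i)\,\nu_i$ in $M^+$ is transported to the multiplicative relation $C_{T(\lambda)}^{\ell(\lambda)}=\prod_{i=1}^n C_{T(\nu_i)}^{\ell(\lambda)a_i/s_i}$ among the central generators, so the integrality established above is exactly what makes the exponents on the right-hand side well defined.
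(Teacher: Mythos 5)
Your proof is correct and matches the paper's treatment: the paper declares the lemma trivial and gives no argument, and your direct expansion together with the observation that $s_i\mid\ell(\lambda)$ whenever $a_i\neq 0$ (by the definition of $\ell(\lambda)$ as a least common multiple in \eqnref{eq: mla}) is exactly the intended justification.
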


%Next we will show that the relations in 
We have obtained relations among generators of ${\rm Hilb}(M^+)$ from \lemref{lem: rel1} and \lemref{lem: rel2}. Next we will show that these relations are enough in the sense that they give rise to complete relations among generators of  the monoid algebra of $M^+$.

\subsection{The monoid algebra}\label{sec: monoidalg}
Given the monoid $M^+$ of any simple Lie algebra, we may form the monoid algebra $\mathbb{C}[M^+]$. As a $\mathbb{C}$-vector space, $\mathbb{C}[M^+]$ has a basis consisting of symbols $X^{\lambda}, \lambda\in M^{+}$, with multiplication given by the bilinear extension of $X^{\lambda}X^{\mu}=X^{\lambda+\mu}$. We agree that $X^0=1$. 

Clearly, $\{X^{\lambda}\mid \lambda\in {\rm Hilb}(M^+)\}$ is a generating set of $\mathbb{C}[M^+]$. In what follows,  we shall determine relations among these generators, and hence obtain a presentation of the monoid algebra $\mathbb{C}[M^+]$.

%be the intersection of the half root lattice $\frac{1}{2}Q=\{\frac{1}{2}\alpha \mid \alpha\in Q\}$ and the set $P^+$ of dominant weights of $\mathfrak{g}$
Recall that for the  simple Lie algebra of type $\RN{1}$ considered in \lemref{lem: Hilbtype}, the Hilbert basis ${\rm Hilb}(M^+)$ consists of all fundamental weights $\varpi_i$ of $\mathfrak{g}$.

\begin{theorem}\label{thm: monoalg1}
  Let $\mathfrak{g}$ be a simple Lie algebra of type $\RN{1}$ with rank $n$, and let $M^+$ be the monoid associated with $\mathfrak{g}$. Then the monoid algebra  $\mathbb{C}[M^+]$ is isomorphic to the polynomial algebra $\mathbb{C}[x_1, \dots, x_n]$ in $n$ variables $x_i$. 
\end{theorem}
\begin{proof}
  Since $\mathfrak{g}$ is of type \RN{1},  the monoid algebra $\mathbb{C}[M^+]$ is generated by $X^{\varpi_i}$ for $1\leq i\leq n$. As the fundamental weights $\varpi_i$ are linearly independent, $\mathbb{C}[M^+]$ is isomorphic to the polynomial algebra $\mathbb{C}[x_1, \dots, x_n]$ in $n$ variables $x_i$, with each $X^{\varpi_i}$ assigned to $x_i$.
\end{proof}

We consider the type $\RN{2}$ case, i.e., $\mathfrak{g}$ is one of the types $A_n(n\geq 2)$, $D_{2k+1}(k\geq 2)$ and $E_6$.  In this case, ${\rm Hilb}(M^+)$ is a disjoint union of self-conjugate elements and non-self-conjugate elements. The non-self-conjugate elements appear in pairs; we use $\{\lambda, \overline{\lambda}\}$ to indicate that $\lambda$ and $ \overline{\lambda}$ are conjugate to each other.

\begin{theorem}\label{thm: monoalg2}
  Let $\mathfrak{g}$ be a simple Lie algebra of type $\RN{2}$ with rank $n$, and let $\sigma$ be the  involution of the corresponding  Dynkin diagram given by \figref{fig: involution}. Let ${\rm Hilb}(M^+)$ be the Hilbert basis of the monoid $M^+$ associated with $\mathfrak{g}$. Then the monoid algebra  $\mathbb{C}[M^+]$ is isomorphic to  $\mathcal{A}=\mathcal{P}/\mathcal{I}$, where $\mathcal{P}$ is the polynomial algebra over $\mathbb{C}$ in variables $x_{\lambda}, \lambda\in {\rm Hilb}(M^+)$, and $\mathcal{I}$ is the ideal of $\mathcal{P}$ generated by 
   \[
   \begin{aligned}
      x_{\lambda}x_{\bar{\lambda}}&- \prod_{i:\, i<\sigma(i)} x_{\mu_i}^{{\rm max}\{a_i, a_{\sigma(i)}\}},   \\
      x_{\lambda}^{\ell(\lambda)}& - \prod_{i=1}^n x_{\nu_i}^{\ell(\lambda)a_i/s_i} \,\,  \text{with $\lambda\neq  \nu_i, 1\leq i\leq n$} 
   \end{aligned}
   \]
for each non-self-conjugate pair $\{\lambda, \overline{\lambda}\}$ of ${\rm Hilb}(M^+)$ with $\lambda= \sum_{i=1}^na_i\varpi_i$ and $\overline{\lambda}= \sum_{i=1}^na_{\sigma(i)}\varpi_i$, where $\mu_i\in {\rm Hilb}(M^+)$ are self-conjugate elements  given by \lemref{lem: HilMsigma},  $\nu_i=s_i\varpi_i \in {\rm Hilb}(M^+)$ are scalar multiples of the fundamental weights $\varpi_i$ with $s_i$ given by \lemref{lem: seqint}, and  $\ell(\lambda)$ is defined by \eqref{eq: mla}. 

\end{theorem}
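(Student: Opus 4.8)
The plan is to exhibit the obvious candidate isomorphism and then control it degree by degree. First I would observe that sending $x_\lambda \mapsto X^\lambda$ for $\lambda \in {\rm Hilb}(M^+)$ defines a surjective algebra homomorphism $\pi : \mathcal{P} \to \mathbb{C}[M^+]$, since the Hilbert basis generates $M^+$. By \lemref{lem: rel1} and \lemref{lem: rel2} the two families of binomials generating $\mathcal{I}$ map to zero, so $\pi$ descends to a surjection $\bar\pi : \mathcal{A} = \mathcal{P}/\mathcal{I} \to \mathbb{C}[M^+]$. Everything then comes down to proving that $\bar\pi$ is injective.

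The key structural point is that both defining binomials are homogeneous for the grading of $\mathcal{P}$ by the weight lattice $P$ in which $\deg x_\lambda = \lambda$: \lemref{lem: rel1} gives $\lambda + \overline{\lambda} = \sum_{i<\sigma(i)}{\rm max}\{a_i,a_{\sigma(i)}\}\mu_i$ and \lemref{lem: rel2} gives $\ell(\lambda)\lambda = \sum_i (\ell(\lambda)a_i/s_i)\nu_i$, so in each binomial the two monomials carry the same $P$-degree. Hence $\mathcal{I}$ is $P$-homogeneous, $\mathcal{A}$ inherits the $P$-grading with nonzero pieces indexed by $M^+$, and $\bar\pi$ is a graded map. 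Since $\mathbb{C}[M^+]_\lambda = \mathbb{C}X^\lambda$ is one-dimensional for $\lambda \in M^+$ and zero otherwise, and $\bar\pi$ is a graded surjection, it suffices to prove $\dim_{\mathbb{C}}\mathcal{A}_\lambda \le 1$ for every $\lambda \in M^+$; equivalently, that any two monomials in the generators with the same $P$-degree are equal in $\mathcal{A}$.

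To establish this I would set up a rewriting system from the two relations, oriented as $x_\lambda x_{\overline{\lambda}} \to \prod_i x_{\mu_i}^{{\rm max}\{a_i,a_{\sigma(i)}\}}$ and $x_\lambda^{\ell(\lambda)} \to \prod_i x_{\nu_i}^{\ell(\lambda)a_i/s_i}$ for each non-self-conjugate $\lambda$. The first rule forces at most one member of each conjugate pair $\{\lambda,\overline{\lambda}\}$ to survive in a reduced monomial, and the second caps the exponent of each remaining non-self-conjugate, non-$\nu_i$ generator below $\ell(\lambda)$. I would fix a term order refining the coefficient-sum grading $\gamma = \sum a_i\varpi_i \mapsto \sum a_i$; by the degree computations above this grading takes equal values on the two monomials of each relation, so a genuine tie-break is needed, but any such term order makes reduction terminate and the reduced (standard) monomials span each $\mathcal{A}_\lambda$. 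Injectivity of $\bar\pi$ then reduces to the combinatorial claim that for each $\lambda \in M^+$ there is a unique standard monomial of degree $\lambda$, i.e.\ a unique standard factorization of $\lambda$ into Hilbert basis elements. Here I would exploit \lemref{lem: HilMsigma}, \lemref{lem: HilM} and \lemref{lem: seqint}: one checks that the self-conjugate submonoid of $M^+$ is freely generated by the $\mu_i$, so the self-conjugate part of any factorization is forced, and the standardness constraints (one side per pair, exponents bounded by $\ell$) then pin down the non-self-conjugate part uniquely.

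The step I expect to be the main obstacle is completeness of the rewriting — that the two relation families really capture all coincidences among monomials, equivalently that the oriented binomials form a Gr\"obner basis of the toric ideal $\ker\pi$. Concretely this requires checking that every overlap between a pair-relation and a power-relation, and between two power-relations sharing a common $\nu_i$, reduces to zero; the genuine danger is that applying a power-relation reintroduces factors $x_{\nu_i}$ that create fresh conjugate pairs and trigger further pair-relations, so I would need either an explicit confluence check organised by the $\sigma$-orbits of the Dynkin nodes, or the direct uniqueness-of-standard-factorization argument above carried out over the three families $A_n$, $D_{2k+1}$ and $E_6$. In either route I would lean on the freeness of the self-conjugate submonoid to isolate the remaining ambiguity in the finitely many mixed non-self-conjugate generators, where it can be resolved using the explicit values of $s_i$ from \lemref{lem: seqint}.
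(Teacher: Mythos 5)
Your reduction is sound up to a point: the surjection $x_{\lambda}\mapsto X^{\lambda}$, the containment $\mathcal{I}\subseteq\ker$ via \lemref{lem: rel1} and \lemref{lem: rel2}, and the observation that both binomial families are homogeneous for the $P$-grading with $\deg x_{\lambda}=\lambda$, so that injectivity is equivalent to $\dim_{\mathbb{C}}\mathcal{A}_{\lambda}\le 1$ for all $\lambda\in M^+$, are all correct and this graded reformulation is genuinely different from (and cleaner than) the paper's setup. However, the step you yourself flag as the main obstacle is a genuine gap, and moreover the specific claim you make to close it --- that the standardness constraints (at most one member of each conjugate pair, exponents capped by $\ell$) ``pin down the non-self-conjugate part uniquely'' --- is false for $E_6$. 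Take $\lambda=2\varpi_1+\varpi_3+2\varpi_5$: both $x_{\varpi_1+\varpi_3}\,x_{\varpi_1+2\varpi_5}$ and $x_{2\varpi_1+\varpi_5}\,x_{\mu_3}$ (with $\mu_3=\varpi_3+\varpi_5$) are standard monomials of this degree under your rules, since no generator appears together with its own conjugate and every exponent is $1<\ell=3$. So your reduced monomials do not form a basis, the proposed rewriting system is not confluent, and the listed binomials are not a Gr\"obner basis for the leading terms you chose; one would have to verify by hand that such differences lie in $\mathcal{I}$ (note that the power relations only give $(x_{\varpi_1+\varpi_3}x_{\varpi_1+2\varpi_5})^3\equiv(x_{2\varpi_1+\varpi_5}x_{\mu_3})^3$, which in a ring not yet known to be a domain does not yield equality of the elements themselves). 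As written, the argument does not go through.

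For comparison, the paper sidesteps all confluence questions by a localization argument: it takes the polynomial subalgebra $R$ on the variables indexed by the linearly independent set $\Upsilon=\{\mu_i: i<\sigma(i)\}\cup\{\nu_i: i=\sigma(i)\}$, on which $\varphi$ is injective, inverts $S=R\setminus\{0\}$, and shows that $S^{-1}\mathcal{A}$ is a field by adjoining the remaining $x_{\lambda}$ one at a time --- each is either determined rationally by a pair relation or algebraic via a power relation. Maximality of $S^{-1}\mathcal{I}$ then forces $\ker\varphi_S=S^{-1}\mathcal{I}$ and, after contracting, $\ker\varphi=\mathcal{I}$. If you want to salvage your route, you would need either to enlarge your relation set to an actual Gr\"obner basis (resolving overlaps such as the $E_6$ example above case by case for $A_n$, $D_{2k+1}$ and $E_6$), or to replace the uniqueness-of-standard-factorization claim by an argument of the paper's localization type.
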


Before embarking on the proof, let us illustrate this theorem with  examples. 

\begin{example}\label{exam: rel}\
%We identify $x_{\lambda}$ with its image in $\mathcal{P}/\mathcal{I}$. 
\begin{enumerate} 
  \item Type $A_2$. ${\rm Hilb}(M^+)$ consists of the following elements:
  \[ \mu_1=\varpi_1+\varpi_2,\,\, \{\nu_1=3\varpi_1, \nu_2=3\varpi_2\}. \]
   The ideal $\mathcal{I}$ is generated by  $x_{\nu_1}x_{\nu_2}-x_{\mu_1}^3$. Ideals for type $A_3$ and $A_4$ can be obtained by using \exref{exam: conj}.

  \item Type $D_{2k+1}(k\geq 2)$. ${\rm Hilb}(M^+)$ consists of the following elements:
  \[ 
   \begin{aligned}
       &\mu_i=\nu_i=\varpi_i, \quad 1\leq i\leq n-2,\\
       &\mu_{n-1}=\varpi_{n-1}+\varpi_n, \quad \{\nu_{n-1}=2\varpi_{n-1}, \nu_n=2\varpi_n\},
   \end{aligned}
   \]
   where $n=2k+1$. The ideal $\mathcal{I}$ is generated by $x_{\nu_{n-1}} x_{\nu_n}- x_{\mu_{n-1}}^2$.

  \item Type $E_6$. ${\rm Hilb}(M^+)$ consists of the following elements: 
  \[
   \begin{aligned}
     &\mu_1=\varpi_1+\varpi_6, \,\, \mu_2=\nu_2=\varpi_2, \,\, \mu_3=\varpi_3+\varpi_5,\,\, \mu_4=\nu_4=\varpi_4,\\
     &\{\nu_1=3\varpi_1,\nu_6=3\varpi_6\}, \,\, \{\nu_3=3\varpi_3,\nu_5=3\varpi_5\},\\
     &\{ \varpi_1+\varpi_3, \varpi_5+\varpi_6 \}, \,\, \{ \varpi_1+2\varpi_5, 2\varpi_3+\varpi_6 \},\,\, \{ 2\varpi_1+\varpi_5, \varpi_3+2\varpi_6 \}.
   \end{aligned}
  \]
  Hence the ideal $\mathcal{I}$ is generated by the following binomials:
\[
 \begin{aligned}
    &x_{\nu_1}x_{\nu_6}-x_{\mu_1}^3, \quad x_{\nu_3}x_{\nu_5}-x_{\mu_3}^3, \quad 
    x_{\varpi_1+\varpi_3} x_{\varpi_5+\varpi_6}- x_{\mu_1}x_{\mu_3}, \\
    &x_{\varpi_1+2\varpi_5} x_{2\varpi_3+\varpi_6}- x_{\mu_1}x_{\mu_3}^2,\quad 
      x_{2\varpi_1+\varpi_5} x_{\varpi_3+2\varpi_6 }- x_{\mu_1}^2x_{\mu_3},\\
     & x_{\varpi_1+\varpi_3}^3-x_{\nu_1}x_{\nu_3},\quad 
       x_{\varpi_1+2\varpi_5}^3-x_{\nu_1}x_{\nu_5}^2,\quad 
       x_{2\varpi_1+\varpi_5}^3-x_{\nu_1}^2x_{\nu_5}.\\      
 \end{aligned} 
\]
  \end{enumerate}  
\end{example}

\begin{proof}[Proof of \thmref{thm: monoalg2}]
  We define the following surjective algebra homomorphism 
  $$\varphi: \mathcal{P} \rightarrow \mathbb{C}[M^+], \quad x_{\lambda}\mapsto X^{\lambda}, \,  \lambda\in {\rm Hilb}(M^+).$$ 
  Then by \lemref{lem: rel1} and \lemref{lem: rel2} we have $\mathcal{I}\subseteq {\rm Ker}\, \varphi$. We need to prove that ${\rm Ker}\, \varphi\subseteq \mathcal{I}$, whence $\mathcal{A}=\mathcal{P}/\mathcal{I}$ is isomorphic to $\mathbb{C}[M^+]$. 

We start by defining a polynomial subalgebra $R\subseteq \mathcal{P}$ and show that the restriction $\varphi|_{R}$ is injective. Let $I:=\{i\mid i<\sigma(i) \text{\, for $1\leq i\leq n$} \}$ and 
\[ \Upsilon:=\{\mu_i\mid i\in I \} \cup \{ \nu_i\mid i\in \{1,2, \dots,n\}\backslash I\}.  \]
are linearly independent. Then it can be verified case by case  for all $A_n(n\geq 2)$, $D_{2k+1}(k\geq 2)$ and $E_6$ that $\Upsilon$ is a linearly independent set. Denote by $R:=\mathbb{C}[x_{\lambda}, \lambda\in \Upsilon]$ the polynomial subalgebra of $\mathcal{P}$. By the linear independence of $\Upsilon$, the image $\varphi(R)$ is a polynomial subalgebra of $\mathbb{C}[M^+]$ generated by the algebraically independent elements $X^{\lambda}, \lambda\in \Upsilon$. Therefore, $\varphi|_{R}$ is an isomorphism and $R\cap {\rm Ker}\, \varphi=0$. Define the multiplicatively closed sets $S:=R-\{0\}$ and $\varphi(S)$. 

Next we consider the rings of fractions $S^{-1}\mathcal{P}$ and $\varphi(S)^{-1}\mathbb{C}[M^+]$, and the induced surjective homomorphism 
\[\varphi_S: S^{-1}\mathcal{P} \rightarrow \varphi(S)^{-1}\mathbb{C}[M^+]  \]
given by $\varphi_S(s^{-1}a)= \varphi(s)^{-1}\varphi(a)$ for $a\in \mathcal{P}$ and $s\in S$.  Clearly, ${\rm Ker}\, \varphi\subseteq {\rm Ker}\, \varphi_{S}\cap \mathcal{P}$. We claim that $S^{-1}\mathcal{I}$ is a maximal ideal of $S^{-1}\mathcal{P}$.  If the claim is done, then  ${\rm Ker}\, \varphi_S=S^{-1}\mathcal{I}$ since $S^{-1}\mathcal{I}\subseteq {\rm Ker}\, \varphi_S$. Moreover, $\mathcal{I}$ is a prime ideal since $S^{-1}\mathcal{I}$ is maximal (hence a prime ideal) and $\mathcal{I}\cap S=0$ (follows from $R\cap {\rm Ker}\,\varphi=0$). It follows that  $S^{-1}\mathcal{I}\cap \mathcal{P} =\mathcal{I}$. Combing all above together, we obtain
\[ {\rm Ker}\, \varphi \subseteq {\rm Ker}\, \varphi_S\cap \mathcal{P}=S^{-1}\mathcal{I}\cap \mathcal{P} =\mathcal{I} \]
as required. 

It remains to prove our claim, which is equivalent to showing that $S^{-1}\mathcal{P}/S^{-1}\mathcal{I}\cong S^{-1}\mathcal{A}$ is a field. Consider the fraction field $F=S^{-1}R$ of $R$. We will adjoin extra elements $x_{\lambda}, \lambda\in {\rm Hilb}(M^+)\backslash\Upsilon$ to $F$, and then $S^{-1}\mathcal{A}$ is equal to the resulting extension field of $F$.  Note that by \lemref{lem: HilM} and \lemref{lem: seqint} we have $x_{\mu_i}=x_{\nu_i}=x_{\varpi_i}\in F$ for all $i=\sigma(i)$. For any $i<\sigma(i)$, we define $F_1:=F[x_{\nu_i}]\cong F[t]/(t-x_{\nu_{\sigma(i)}}^{-1}\prod_{i, i<\sigma(i)}x_{\mu_i}^{s_i})$. Since $x_{\nu_{\sigma(i)}}^{-1}\prod_{i, i<\sigma(i)}x_{\mu_i}^{s_i}\in F$, we have $F_1=F$ and hence $x_{\nu_i}\in F$. Therefore, all $x_{\mu_i}$ and $x_{\nu_i}$ belong to the fraction field $F$. 
Now taking an arbitrary non-self-conjugate pair $\{\lambda, \overline{\lambda}\}$ of ${\rm Hilb}(M^+)$ with $\lambda\neq \nu_i$, we define $F_2=F[x_{\lambda}]\cong F[t]/(t^{\ell(\lambda)}- \prod_{i=1}^n x_{\nu_i}^{\ell(\lambda)a_i/s_i})$. Since $x_{\lambda}$ is algebraic over $F$,  $F[x_{\lambda}]=F(x_{\lambda})$ is a field. Similarly, define $F_3=F_2[x_{\bar{\lambda}}]\cong F_2[t]/(t-x_{\lambda}^{-1}\prod_{i, i<\sigma(i)}x_{\mu_i}^{{\rm max}\{a_i,a_{\sigma(i)}\} } )$. Since $x_{\lambda}^{-1}\prod_{i, i<\sigma(i)}x_{\mu_i}^{{\rm max}\{a_i,a_{\sigma(i)}\} }\in F_2$, we have $F_2=F_3$ is a field. Repeating the above step for each non-self-conjugate pair $\{\lambda, \overline{\lambda}\}$, we obtain an extension field $F^{\prime}=S^{-1}\mathcal{A}$ of $F$. This completes the proof.  
\end{proof}

%Each isomorphism class $[V]$ of $S({\rm U}_q(\mathfrak{g}))$ produces a central element $C_{V}$ of $Z({\rm U}_q(\mathfrak{g}))$.

\section{Proof of the main theorem}\label{sec: cenalg}

This section is devoted to proving  \thmref{thm: cen}. We will review the quantised Harish-Chandra theorem, which allows us to construct explicitly an isomorphism between the centre $Z({\rm U}_q(\mathfrak{g}))$ and   the Grothendieck algebra $S({\rm U}_q(\mathfrak{g}))$ of the category of finite dimensional ${\rm U}_q(\mathfrak{g})$-modules  whose weights are contained in $M$.   Then we  show that $S({\rm U}_q(\mathfrak{g}))$ is isomorphic to the monoid algebra $\mathbb{C}(q)[M^+]$ over $\mathbb{C}(q)$, and hence $Z({\rm U}_q(\mathfrak{g}))\cong \mathbb{C}(q)[M^+]$. Combining with the presentation of $\mathbb{C}[M^+]$, we obtain  explicit  generators and relations  of  $Z({\rm U}_q(\mathfrak{g}))$ as given in \thmref{thm: cen}.

\subsection{The Harish-Chandra isomorphism}\label{sec: HCiso}
We will follow \cite[Chapter 6]{Jan96} and retain notation from \secref{sec: def}. Write ${\rm U}={\rm U}_q(\mathfrak{g})$. 
Recall that  the quantum group ${\rm U}$ is graded by the root lattice $Q$, i.e., ${\rm U}=\bigoplus_{\nu\in Q} {\rm U}_{\nu}$.
In particular, ${\rm U}_0= {\rm U}^0\oplus \oplus_{\nu>0} {\rm U}_{-\nu}^- {\rm U}^0 {\rm U}_{\nu}^+  $, where $ {\rm U}^0$ denotes the subalgebra generated by all $K_i^{\pm 1}$. It is known that the projection $$\pi: {\rm U}_0\rightarrow {\rm U}^0$$ is an algebra homomorphism, and  the centre $Z({\rm U}_q(\mathfrak{g}))$   is contained in ${\rm U}_0$. 

The Harish-Chandra isomorphism identifies $Z({\rm U}_q(\mathfrak{g}))\subseteq {\rm U}_0$ with a $W$-invariant subalgebra of ${\rm U}^0$. Precisely, we define an algebra automorphism of ${\rm U}^0$  by
\[ \gamma_{-\rho}: {\rm U}^0\rightarrow {\rm U}^0, \quad K_{\alpha} \mapsto q^{(-\rho, \alpha)} K_{\alpha},  \]
for any $\alpha\in Q$, where $\rho$ denotes the half sum of positive roots of $\mathfrak{g}$.  Then the composite $\gamma_{-\rho}\circ \pi$ is called the  Harish-Chandra homomorphism, under which the image of $Z({\rm U})$  can be described as follows. 

Recall that  $\frac{1}{2}Q:=\{\frac{1}{2} \alpha \mid \alpha\in Q \}$ and $M=\frac{1}{2}Q\cap P$. We define
\[ {\rm U}_{\rm ev}^0:=\langle K_{2\lambda} \mid \lambda \in M\rangle \]
to be the subalgebra of ${\rm U}^0$ spanned by $K_{2\lambda}$ for all $\lambda\in M$. Recall that the Weyl group $W$ of $\mathfrak{g}$ acts naturally on ${\rm U}^0$ via $w.K_{\alpha}= K_{w\alpha}$ for any $w\in W$ and $\alpha\in Q$. This action carries over to ${\rm U}_{{\rm ev}}^0$, and we denote by $({\rm U}_{{\rm ev}}^0)^W$ the $W$-invariant subalgebra.

\begin{theorem}\cite[Theorem 6.25]{Jan96}\label{thm: HCiso}
The Harish-Chandra homomorphism
\begin{equation}\label{eq: HCmap}
\gamma_{-\rho}\circ\pi:Z({\rm U}_q(\mathfrak{g}))\rightarrow({\rm U}_{\rm ev}^0)^W
\end{equation}
is an isomorphism.
\end{theorem}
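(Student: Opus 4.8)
The plan is to prove that the Harish-Chandra homomorphism $\gamma_{-\rho}\circ\pi$ restricts to an isomorphism onto $({\rm U}_{\rm ev}^0)^W$ by verifying the three standard properties: that it is a well-defined algebra homomorphism on $Z({\rm U})$, that it is injective, and that its image is exactly the $W$-invariants. Since $\pi:{\rm U}_0\to{\rm U}^0$ is already an algebra homomorphism and $\gamma_{-\rho}$ is an algebra automorphism of ${\rm U}^0$, the composite is an algebra homomorphism on all of ${\rm U}_0$, hence on $Z({\rm U})\subseteq{\rm U}_0$. So the content is injectivity and the determination of the image.

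\emph{Injectivity.} First I would compute how a central element $z$ acts on a highest-weight vector. If $z=\sum_{\nu\geq 0} u_\nu^- h_\nu u_\nu^+$ with $u_\nu^\pm\in{\rm U}_{\pm\nu}^{\pm}$ and $h_\nu\in{\rm U}^0$, then on the highest weight vector $v_\lambda^+$ of the Verma or simple module $L(\lambda)$ with $\lambda\in P^+$, only the $\nu=0$ term survives (since $u_\nu^+$ for $\nu>0$ kills $v_\lambda^+$), so $z$ acts by the scalar obtained by evaluating $\pi(z)\in{\rm U}^0$ at $\lambda$, namely by $(\gamma_{-\rho}\circ\pi)(z)$ evaluated at $\lambda+\rho$ after the shift. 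Writing $(\gamma_{-\rho}\circ\pi)(z)=\sum_\mu c_\mu K_{2\mu}$, its value on $L(\lambda)$ is a Laurent polynomial $\sum_\mu c_\mu\, q^{(2\mu,\lambda)}$ in $q^{(\cdot,\lambda)}$. The key point is that as $\lambda$ ranges over $P^+$, these evaluations determine the element: if $(\gamma_{-\rho}\circ\pi)(z)=0$, then $z$ acts as zero on every $L(\lambda)$, and since finite dimensional type-$\mathbf 1$ modules separate the centre (a central element acting by $0$ on all simple modules must be $0$, using that ${\rm U}$ has a faithful family of such modules, or equivalently the standard argument via the nondegeneracy of the pairing on ${\rm U}_0$), we conclude $z=0$.

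\emph{Image equals $({\rm U}_{\rm ev}^0)^W$.} For $W$-invariance I would use that $z$ acts on $L(\lambda)$ and on its twist by any element of $W$ by the same scalar, combined with the fact that the character of $L(\lambda)$, and hence the eigenvalue function $\lambda\mapsto\sum_\mu c_\mu q^{(2\mu,\lambda+\rho)}$, must be invariant under the dot-action of $W$; the $-\rho$ twist in $\gamma_{-\rho}$ is precisely chosen to convert the dot-action into the linear $W$-action on ${\rm U}^0$, yielding $(\gamma_{-\rho}\circ\pi)(z)\in({\rm U}^0)^W$. That the image lands in ${\rm U}_{\rm ev}^0$ (only even $K_{2\mu}$ occur) follows because $\pi(z)$ is supported on weights $\mu$ with $2\mu\in Q$, i.e. $\mu\in M$; this is a weight-grading constraint coming from $z\in{\rm U}_0$. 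Surjectivity onto $({\rm U}_{\rm ev}^0)^W$ is then obtained by a dimension/graded-count argument: I would filter both sides by highest weight and show that the associated graded of $(\gamma_{-\rho}\circ\pi)(Z({\rm U}))$ contains a basis of $W$-orbit sums $\sum_{\mu\in W\lambda}K_{2\mu}$ as $\lambda$ runs over $M^+$, which span $({\rm U}_{\rm ev}^0)^W$; concretely, for each dominant $\lambda\in M^+$ one produces a central element (e.g. via the quantum Casimir/trace construction of Section~\ref{sec: cencons}, or via Jantzen's explicit preimages) whose leading term is the orbit sum of $K_{2\lambda}$, and an upper-triangularity argument with respect to the dominance order closes the surjectivity.

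The main obstacle I expect is the surjectivity together with the careful control of the $W$-action under the $\rho$-shift. Injectivity is comparatively soft once one knows simple modules separate central elements, but proving that every $W$-invariant even element is hit requires exhibiting enough central elements with prescribed leading terms and an inductive triangularity argument on the dominance order of highest weights. This is exactly where the $\rho$-shift and the even-lattice constraint $M=\frac12 Q\cap P$ must be handled simultaneously, and it is the technical heart of the theorem; I would follow the treatment in \cite[Chapter 6]{Jan96} for the precise upper-triangularity estimates.
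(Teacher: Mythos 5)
The paper does not prove this statement; it is quoted verbatim from \cite[Theorem 6.25]{Jan96}, so there is no internal proof to compare against. Your outline follows the standard route (homomorphism property, injectivity via central characters, image contained in the $W$-invariants, surjectivity by producing central elements with prescribed leading orbit sums), and the surjectivity step you sketch is in fact essentially what the paper itself carries out later via \lemref{lem: imHC} and \lemref{lem: SU}. However, there is one genuine gap.

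The gap is your justification that the image lands in ${\rm U}_{\rm ev}^0$. You write that this ``follows because $\pi(z)$ is supported on weights $\mu$ with $2\mu\in Q$, i.e.\ $\mu\in M$; this is a weight-grading constraint coming from $z\in{\rm U}_0$.'' That argument is circular: every element of ${\rm U}^0$ is a combination of $K_\beta$ with $\beta\in Q$, so writing $\beta=2\mu$ automatically gives $\mu\in\tfrac12 Q$ --- but membership in $M=\tfrac12 Q\cap P$ requires in addition that $\mu\in P$, i.e.\ $\beta\in Q\cap 2P$, which is a proper sublattice of $Q$ in general (e.g.\ for $\mathfrak{sl}_3$, $\alpha_1\in Q$ but $\alpha_1/2\notin P$). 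The weight grading gives you nothing here; the evenness is a genuinely quantum phenomenon. The standard argument (as in Jantzen's Chapter 6) is that a central element must act by the same scalar on $L(\lambda)$ and on its twists $L(\lambda)\otimes\mathbb{C}_\sigma$ by the one-dimensional representations $\sigma:Q\to\{\pm 1\}$; equivalently, $\pi(z)$ is invariant under the automorphisms $K_\beta\mapsto\sigma(\beta)K_\beta$, and invariance under all sign characters of $Q$ forces the support of $\pi(z)$ into $Q\cap 2P$. Without this step your claimed image is only $({\rm U}^0)^W$ rather than $({\rm U}_{\rm ev}^0)^W$, and the surjectivity statement would then be false as stated. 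A secondary, smaller point: your injectivity argument leans on ``finite dimensional type-$\mathbf 1$ modules separate the centre,'' which is true over $\mathbb{C}(q)$ but is itself a nontrivial input (not merely ``soft''), so it should be cited explicitly rather than waved at.
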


%It turns out the lattice $M$ is closely related to the centre $Z({\rm U})$. 

Note that for each $\lambda\in M=\frac{1}{2}Q\cap P$, there exists a unique $w\in W$ such that $w\lambda\in M^+=\frac{1}{2}Q\cap P^+$.

\begin{lemma}\label{lem: basisUW}
 The elements
${\rm av}(\lambda)=\sum_{w \in W}K_{2w\lambda}, \lambda \in M^+$
form a basis of $({\rm U}_{\rm ev}^0)^W$.
\end{lemma}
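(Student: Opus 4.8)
The plan is to identify ${\rm U}_{\rm ev}^0$ with a permutation module for $W$ and then apply the standard description of the invariants of a permutation representation over a field of characteristic zero.

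First I would record the underlying vector space structure. The subalgebra ${\rm U}^0$ generated by the $K_i^{\pm 1}$ is isomorphic to the group algebra $\mathbb{C}(q)[Q]$ of the root lattice, so the group-like elements $K_\mu$, $\mu\in Q$, are linearly independent over $\mathbb{C}(q)$. Since ${\rm U}_{\rm ev}^0$ is by definition spanned by the $K_{2\lambda}$ with $\lambda\in M$, and $2M= Q\cap 2P\subseteq Q$, this linear independence shows that $\{K_{2\lambda}\mid \lambda\in M\}$ is a $\mathbb{C}(q)$-basis of ${\rm U}_{\rm ev}^0$.

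Next I would analyse the $W$-action on this basis. The Weyl group preserves both $Q$ and $P$, hence preserves $M=\frac{1}{2}Q\cap P$, and acts by permuting the chosen basis via $w\cdot K_{2\lambda}=K_{2w\lambda}$ with $w\lambda\in M$. Thus ${\rm U}_{\rm ev}^0\cong \mathbb{C}(q)[M]$ as a $W$-module, where $W$ acts through its action on the set $M$. The invariants of such a permutation module have as a $\mathbb{C}(q)$-basis the orbit sums $O_\lambda:=\sum_{\mu\in W\lambda}K_{2\mu}$, one for each $W$-orbit in $M$. Because each $W$-orbit on $M$ contains exactly one dominant weight, namely the unique element of $M^+$ in that orbit (as noted just before the statement), these orbits are indexed precisely by $\lambda\in M^+$, and distinct $\lambda\in M^+$ yield distinct orbit sums.

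Finally I would relate ${\rm av}(\lambda)$ to $O_\lambda$. Writing $W_\lambda$ for the stabiliser of $\lambda$ in $W$ and grouping the sum over $W$ into cosets of $W_\lambda$ gives ${\rm av}(\lambda)=\sum_{w\in W}K_{2w\lambda}=|W_\lambda|\,O_\lambda$. Since $\mathbb{C}(q)$ has characteristic zero, $|W_\lambda|$ is invertible, so each ${\rm av}(\lambda)$ is a nonzero scalar multiple of the basis vector $O_\lambda$; hence $\{{\rm av}(\lambda)\mid \lambda\in M^+\}$ is again a basis of $({\rm U}_{\rm ev}^0)^W$. The only point demanding care is this last bookkeeping with the stabiliser, together with the verifications that $W$ preserves $M$ and that dominant representatives of orbits are unique; everything else is the standard invariant theory of permutation modules, so I expect no genuine obstacle.
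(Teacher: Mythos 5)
Your proof is correct and follows essentially the same route as the paper: both identify $({\rm U}_{\rm ev}^0)^W$ as the invariants of a permutation module on the basis $\{K_{2\lambda}\mid\lambda\in M\}$ and use the unique dominant representative of each $W$-orbit in $M$, the paper phrasing this via the averaging operator $\frac{1}{|W|}\sum_{w\in W}w$ where you phrase it via orbit sums. Your explicit bookkeeping of the stabiliser factor $|W_\lambda|$ (so that ${\rm av}(\lambda)=|W_\lambda|\,O_\lambda$) is a point the paper's proof leaves implicit, but the arguments are otherwise the same.
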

\begin{proof}
  Clearly, ${\rm av}(\lambda)\in ({\rm U}_{\rm ev}^0)^W$. If the finite sum $f=\sum_{\lambda} c_{\lambda}K_{2\lambda}$ is $W$-invariant, then 
  $$f= \frac{1}{|W|} \sum_{\lambda}\sum_{w\in W}c_{\lambda}w.K_{2\lambda}=\frac{1}{|W|} \sum_{\lambda} c_{\lambda}(\sum_{w\in W}K_{2w\lambda} ).$$
 Since $\sum_{w\in W}K_{2w\lambda}= {\rm av}(\lambda_0)$ for a unique dominant weight $\lambda_0\in M^+$, the element $f$ can be expressed uniquely as a  linear combination of some ${\rm av}(\lambda), \lambda\in M^+$. Therefore, ${\rm av}(\lambda), \lambda \in M^+$ form a basis for $({\rm U}_{\rm ev}^0)^W$.
\end{proof}

\subsection{Representation theoretical viewpoint}\label{sec: gen}
 
%Recall from \thmref{thm: HCiso} the Harish-Chandra isomorphism $\gamma_{-\rho}\circ \pi: Z({\rm U}_q(\mathfrak{g})) \rightarrow ({\rm U}_{{\rm ev}}^0)^W$. 
Recall from  \secref{sec: construction} our construction of the central element $C_{V}$. The image of $C_V$ under the Harish-Chandra isomorphism can be calculated as follows.

\begin{lemma}\label{lem: imHC}
  Let $V$ be a finite dimensional ${\rm U}_q(\mathfrak{g})$-module with $\Pi(V)\subseteq M$, and let $C_{V}$ be the associated central element as defined in \defref{def: Gamma}.  Then we have
  \[\gamma_{-\rho}\circ\pi (C_{V})=\sum_{\mu\in \Pi(V)}m_{V}(\mu)K_{2\mu}\in ({\rm U}_{\rm ev}^0)^W,\]
  where $m_{V}(\mu)= {\rm dim} L(\lambda)_{\mu}$.
\end{lemma}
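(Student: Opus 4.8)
The plan is to compute the Harish-Chandra image of $C_V$ directly from its definition $C_V = \mathrm{Tr}_1((K_{2\rho}\otimes 1)\Gamma_V)$ with $\Gamma_V = \mathcal{K}_V \widetilde{\mathcal{R}}^T_V \mathcal{R}_V$, and then simplify using the structural form of the quasi $R$-matrix. First I would observe that to apply $\pi: \mathrm{U}_0 \to \mathrm{U}^0$, which projects onto the purely Cartan part, I only need the contribution of $\Gamma_V$ lying in $\mathrm{End}(V)\otimes \mathrm{U}^0$; all other terms are killed by $\pi$. Recall $\mathcal{R}_V = (\zeta_V\otimes\mathrm{id})(\mathfrak{R})$ where $\mathfrak{R} = 1\otimes 1 + \sum_{\nu>0}\Theta_\nu$ with $\Theta_\nu \in \mathrm{U}_{-\nu}^-\otimes \mathrm{U}_\nu^+$, so the second tensor factor of every correction term $\Theta_\nu$ lies in $\mathrm{U}_\nu^+$ with $\nu>0$, which has zero image under $\pi$. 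The same holds for $\widetilde{\mathcal{R}}^T_V = (\zeta_V\otimes\mathrm{id})\phi(\mathfrak{R}^T)$: I would check that $\phi$ sends the nontrivial part into $\mathrm{U}^-_{-\nu}\mathrm{U}^0 \otimes (\text{non-Cartan})$ or similar, so that only the leading term $1\otimes 1$ survives projection onto $\mathrm{U}^0$ in the relevant second factor after multiplication.

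The key computation is then that, modulo the kernel of $\pi$, the product $\widetilde{\mathcal{R}}^T_V \mathcal{R}_V$ reduces to $1\otimes 1$ in the second factor, because any product of a genuine $\mathrm{U}^-$-term with a genuine $\mathrm{U}^+$-term that survives $\pi$ must recombine via the commutation relation $E_iF_j - F_jE_i = \delta_{ij}(K_i-K_i^{-1})/(q_i-q_i^{-1})$; the careful bookkeeping here is to confirm that the cross terms either have nonzero root-lattice degree (hence vanish under $\pi$) or cancel. Thus $\pi$ applied to $\Gamma_V$ reduces to $\pi$ applied to $(K_{2\rho}\otimes 1)\mathcal{K}_V$ in effect, giving $\mathcal{K}_V = \sum_{\eta\in\Pi(V)} P_\eta \otimes K_{2\eta}$. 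Taking the partial trace with the weighting $K_{2\rho}$, and using that $\mathrm{Tr}(P_\eta \zeta_V(K_{2\rho})) = q^{(2\rho,\eta)} m_V(\eta)$ together with $\dim V_\eta = m_V(\eta)$, yields $\pi(C_V) = \sum_{\mu\in\Pi(V)} m_V(\mu)\, q^{(2\rho,\mu)} K_{2\mu}$. Applying $\gamma_{-\rho}$, which sends $K_{2\mu}\mapsto q^{(-\rho,2\mu)}K_{2\mu} = q^{-(2\rho,\mu)}K_{2\mu}$, the weight-dependent scalars cancel exactly, producing $\gamma_{-\rho}\circ\pi(C_V) = \sum_{\mu\in\Pi(V)} m_V(\mu) K_{2\mu}$.

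The main obstacle will be rigorously justifying that the off-diagonal parts of $\widetilde{\mathcal{R}}^T_V \mathcal{R}_V$ do not contribute to $\pi(C_V)$ after the partial trace. A priori, a product term from $\mathcal{R}_V$ of the form $\zeta_V(\text{something in }\mathrm{U}^-)\otimes (u^+ \in \mathrm{U}^+_\nu)$ multiplied by a term from $\widetilde{\mathcal{R}}^T_V$ could produce, via the straightening/commutation relations, a genuinely Cartan second factor (since $E$'s and $F$'s can recombine into $K$'s). I would handle this by a weight-grading argument: the second tensor factor of $\Gamma_V$ is graded by $Q$, and $\pi$ retains only the degree-zero component, so I must show every surviving degree-zero piece is forced to come with a first tensor factor in $\mathrm{End}(V)$ that, after multiplication by $\zeta_V(K_{2\rho})$, has trace zero — or that such pieces cancel among themselves. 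I expect this can be controlled by tracking that a degree-zero contribution arising from pairing $\Theta_\nu$ against a $\phi$-image term forces a shift in the weight label of the projection $P_\eta$, and the partial trace then picks out exactly the diagonal weight multiplicities, with all genuinely off-diagonal endomorphisms contributing zero trace. Making this cancellation transparent, rather than computing it term-by-term, is the delicate point; I would prefer to phrase it via the identity $\mathfrak{R} = 1\otimes 1 + (\text{higher})$ combined with $\pi$ being an algebra homomorphism on $\mathrm{U}_0$, reducing the whole expression cleanly to its diagonal part.
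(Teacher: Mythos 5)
Your proposal follows the same route as the paper: $\pi$ annihilates every term of $\Gamma_V$ arising from the nontrivial part of the quasi $R$-matrix, so $\pi(C_V)={\rm Tr}_1((K_{2\rho}\otimes 1)\mathcal{K}_V)=\sum_{\mu}q^{(2\rho,\mu)}m_V(\mu)K_{2\mu}$, and $\gamma_{-\rho}$ strips off the scalars $q^{(2\rho,\mu)}$; this is exactly the paper's (very short) argument. The one point you flag as ``delicate'' --- that products of $\mathrm{U}^-$- and $\mathrm{U}^+$-terms might recombine via $E_iF_j-F_jE_i$ into genuine Cartan contributions --- is in fact a non-issue, and the reason is the \emph{order} of the factors in $\Gamma_V=\mathcal{K}_V\widetilde{\mathcal{R}}^T_V\mathcal{R}_V$. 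The second tensor component of each nontrivial term of $\widetilde{\mathcal{R}}^T_V=(\zeta_V\otimes{\rm id})\phi(\mathfrak{R}^T)$ lies in ${\rm U}^0{\rm U}^-_{-\nu}$ with $\nu>0$, while that of $\mathcal{R}_V$ lies in ${\rm U}^+_{\mu}$, so every cross term already has second factor in ${\rm U}^-_{-\nu}{\rm U}^0{\rm U}^+_{\mu}$ --- the normal form with respect to which the decomposition ${\rm U}_0={\rm U}^0\oplus\bigoplus_{\nu>0}{\rm U}^-_{-\nu}{\rm U}^0{\rm U}^+_{\nu}$, and hence $\pi$, is defined. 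No straightening is ever required, so no Serre/commutator terms can produce a Cartan piece, and $\pi$ simply kills every term with $(\nu,\mu)\neq(0,0)$ (terms with $\mu\neq\nu$ have nonzero $Q$-degree and cancel anyway since $C_V\in{\rm U}_0$). Had the factors $\widetilde{\mathcal{R}}^T_V$ and $\mathcal{R}_V$ appeared in the opposite order, your worry would be genuine; as it stands the projection argument closes immediately and the rest of your computation is correct.
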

\begin{proof}
  Recall  that $\pi$ is an algebra homomorphism from ${\rm U}_0$ to ${\rm U}^0$, where the latter is a subalgebra generated by all $K_i^{\pm 1}$. Hence we have
  \[ 
  \begin{aligned}
      \pi(C_{V})&=\pi ({\rm Tr}_1((K_{2\rho}\otimes 1) \mathcal{K}_{V}\widetilde{\mathcal{R}}^T_V \mathcal{R}_V  )) \\
      &= {\rm Tr}_1((K_{2\rho}\otimes 1) \mathcal{K}_{V})\\
      &= \sum_{\mu\in\Pi(V)}q^{(\mu,2\rho)}m_{V}(\mu)K_{2\mu}.
  \end{aligned}
  \]
  By the definition of $\gamma_{-\rho}$  we have
$
\gamma_{-\rho}\circ \pi(C_{V})=\sum_{\mu\in \Pi(V)}m_{V}(\mu)K_{2\mu}$
as desired.
\end{proof}

Recall that the character of $V$ is defined by $\chi(V)=\sum_{\mu\in \Pi(V)} m_{V}(\mu) e^{\mu}$ \cite{Hum72, Jan96}. Hence $\gamma_{-\rho}\circ\pi (C_{V})$ is equal to the character $\chi(V)$ with $e^{\mu}$ replaced with $K_{2\mu}$, and we may study the centre $Z({\rm U}_q(\mathfrak{g}))$ from a representation-theoretic point of view. 

We define $\bar{R}({\rm U}_q(\mathfrak{g})):=\mathbb{C}(q)\otimes_{\mathbb{Z}} R({\rm U}_q(\mathfrak{g}))$, where $R({\rm U}_q(\mathfrak{g}))$ is the Grothendieck ring of the category of finite dimensional ${\rm U}_q(\mathfrak{g})$-modules of type $1$. It is well known that the isomorphism classes $[L(\lambda)], \lambda\in P^+$ form a basis for  $\bar{R}({\rm U}_q(\mathfrak{g}))$. Define $S({\rm U}_q(\mathfrak{g}))$ to be the subalgebra of $\bar{R}({\rm U}_q(\mathfrak{g}))$ which has a basis consisting of isomorphism classes $[L(\lambda)], \lambda\in M^+$. 

%for any weight $\mu\in \Pi(L(\lambda))$ with $\lambda\in M^+$, we have $\lambda-\mu\in Q\subseteq M=\frac{1}{2}Q\cap P$, and hence $\Pi(L(\lambda))\subseteq M$. Moreover, 

Note that for any $\lambda,\mu\in M^+$ there is a unique decomposition $L(\lambda)\otimes L(\mu)\cong \bigoplus_{\nu}c_{\lambda,\mu}^{\nu}L(\nu)$, where $\nu\in M^+$ and $c_{\lambda,\mu}^{\nu}$ is the multiplicity of $L(\nu)$. Therefore, given any isomorphism class $[V]\in S({\rm U}_q(\mathfrak{g}))$, we have $\Pi(V)\subseteq M$ and hence can define a corresponding central element $C_{V}$. This gives rise to the following commutative diagram:
\begin{equation}\label{eq: commdia}
  \begin{tikzcd}
S({\rm U}_q(\mathfrak{g}))\arrow[rd,"\xi"] \arrow[d]& \\
  Z({\rm U}_q(\mathfrak{g}))  \arrow[r, "\gamma_{-\rho}\circ\pi "] &  ({\rm U}_{{\rm ev}}^0)^W. 
\end{tikzcd}
\end{equation}

\begin{lemma}\label{lem: SU}
   There is an algebra isomorphism $\xi: S({\rm U}_q(\mathfrak{g}))\rightarrow ({\rm U}_{{\rm ev}}^0)^W$ defined by
   \[  \xi([V])= \sum_{\mu\in \Pi(V)} m_{V}(\mu) K_{2\mu},\]
     where $m_{V}(\mu)= {\rm dim} L(\lambda)_{\mu}$.
\end{lemma}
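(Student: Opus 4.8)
The plan is to recognize $\xi$ as the character map $\chi(V)=\sum_{\mu\in\Pi(V)}m_V(\mu)e^{\mu}$ followed by the substitution $e^{\mu}\mapsto K_{2\mu}$, and then to check the three properties of an algebra isomorphism---well-definedness into the target, multiplicativity, and bijectivity---in turn. Since the isomorphism classes $[L(\lambda)]$, $\lambda\in M^+$, form a basis of $S({\rm U}_q(\mathfrak{g}))$ and the category of finite-dimensional type-$1$ modules is semisimple, I would first define $\xi$ on this basis and extend $\mathbb{C}(q)$-linearly; additivity of characters on direct sums then shows the stated formula holds on every class $[V]$.

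Well-definedness of the target is immediate from \lemref{lem: imHC}: that lemma already records $\sum_{\mu\in\Pi(V)}m_V(\mu)K_{2\mu}\in({\rm U}_{\rm ev}^0)^W$, since every weight of a module in $S({\rm U}_q(\mathfrak{g}))$ lies in $M=\frac12 Q\cap P$ (so $2\mu\in Q$) and the weight multiplicities are constant on $W$-orbits. For multiplicativity, the product on $S({\rm U}_q(\mathfrak{g}))$ is $[V][W]=[V\otimes W]$, which lands back in $S({\rm U}_q(\mathfrak{g}))$ because $M^+$ is a monoid, so the simple constituents $L(\nu)$ of $L(\lambda)\otimes L(\mu)$ have $\nu\in M^+$. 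The weights of $V\otimes W$ are sums $\mu+\mu'$ with multiplicities given by the convolution $m_{V\otimes W}=m_V\ast m_W$, i.e. $\chi(V\otimes W)=\chi(V)\chi(W)$; applying $e^{\mu}\mapsto K_{2\mu}$ together with $K_{2\mu}K_{2\mu'}=K_{2(\mu+\mu')}$ turns this into $\xi([V\otimes W])=\xi([V])\xi([W])$.

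For bijectivity I would compare $\{\xi([L(\lambda)])\}_{\lambda\in M^+}$ with the basis ${\rm av}(\lambda)=\sum_{w\in W}K_{2w\lambda}$, $\lambda\in M^+$, of $({\rm U}_{\rm ev}^0)^W$ furnished by \lemref{lem: basisUW}. Highest-weight theory gives $m_{L(\lambda)}(\lambda)=1$ and $\mu\leq\lambda$ for every weight $\mu$ of $L(\lambda)$; grouping the weights into $W$-orbits (on each of which the multiplicity is constant) and noting that ${\rm av}(\lambda)=|{\rm Stab}_W(\lambda)|\sum_{\mu\in W\lambda}K_{2\mu}$, I obtain an expansion
\[ \xi([L(\lambda)])=\frac{1}{|{\rm Stab}_W(\lambda)|}\,{\rm av}(\lambda)+\sum_{\nu\in M^+,\,\nu<\lambda}c_{\lambda\nu}\,{\rm av}(\nu) \]
for suitable scalars $c_{\lambda\nu}$. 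Hence the transition matrix from $\{{\rm av}(\lambda)\}$ to $\{\xi([L(\lambda)])\}$ is triangular with respect to any total refinement of the dominance order, with nonzero diagonal entries $1/|{\rm Stab}_W(\lambda)|$, and is therefore invertible. Thus $\xi$ sends a basis to a basis and, being an algebra homomorphism, is an isomorphism.

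I expect the bijectivity step to be the main point requiring care: one must justify the triangular expansion cleanly, in particular that only dominant $\nu\in M^+$ with $\nu<\lambda$ occur and that the orbit-sum bookkeeping (including the stabilizer factor) is correct. The remaining parts reduce to the standard identities for characters under $\oplus$ and $\otimes$ once $\xi$ is identified with the character map.
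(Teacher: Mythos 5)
Your proof is correct and follows essentially the same route as the paper: multiplicativity via the character map, and bijectivity by comparison with the basis $\{{\rm av}(\lambda)\}_{\lambda\in M^+}$ of $({\rm U}_{\rm ev}^0)^W$ using triangularity with respect to the dominance order (the paper phrases the inversion as an induction on $\lambda$, which also supplies the local finiteness of $\{\nu\in M^+\mid \nu\leq\lambda\}$ that your infinite triangular matrix implicitly requires). Your orbit/stabilizer bookkeeping agrees with the paper's $|W|/|W\lambda|$ factors, so there is no gap.
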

\begin{proof}
   First, since $\xi([V][W])=\xi([V\otimes W])=\xi([V])\xi([W])$ (see, e.g., \cite[\S 22.5, Proposition B]{Hum72}) $\xi$ is an algebra homomorphism. If $\xi([V])=\xi([W])$, then $V$ and $W$ have the same character and hence $V\cong W$. Therefore $\xi$ is injective. It remains to show the surjectivity.

  Recall from \lemref{lem: basisUW} that $({\rm U}_{{\rm ev}}^0)^W$  has a basis ${\rm av}(\lambda)=\sum_{w\in W}K_{2w\lambda}, \lambda\in M^+$. It suffices to show that for any $\lambda\in M^+$ there exists a ${\rm U}_q(\mathfrak{g})$-module $V$ such that $\xi([V])={\rm av}(\lambda)$. We use induction on $\lambda$. 

  For the base case, if $\lambda\in M^+$ and there are no dominant weights lower than $\lambda$, we have
  \[ \sum_{w\in W} K_{2w\lambda}= \frac{|W|}{|W\lambda|} \sum_{\mu\in W\lambda} K_{2\mu}= \frac{|W|}{|W\lambda|} \xi([L(\lambda)]), \]
  where $W\lambda:=\{w\lambda\mid w\in W\}$ denotes the  $W$-orbit of $\lambda$. For general $\lambda\in M^+$, we have 
  \[
    \begin{aligned}
      \sum_{w\in W} K_{2w\lambda}&= \frac{|W|}{|W\lambda|}(\xi([L(\lambda)])- \sum_{\mu\in \Pi(\lambda)\backslash W\lambda} {\rm dim}\, L(\lambda)_{\mu} K_{2\mu} )\\
        &= \frac{|W|}{|W\lambda|}\xi([L(\lambda)])- \sum_{\mu<\lambda,\, \mu\in P^+} \frac{|W\mu|}{|W\lambda|} {\rm dim}\, L(\lambda)_{\mu}\sum_{w\in W}K_{2
        w\mu},
    \end{aligned}
  \]
  where $\mu<\lambda$ and $\mu\in P^+$ imply that $\mu\in M^+$. By induction hypothesis, each sum $\sum_{w\in W}K_{2w\mu}$ has a preimage  in $S({\rm U}_q(\mathfrak{g}))$ and so does $ {\rm av}(\lambda)=\sum_{w\in W} K_{2w\lambda}$. This completes the proof.
\end{proof}

The following is a consequence of the commutative diagram \eqref{eq: commdia} and \lemref{lem: SU}.
\begin{corollary}\label{coro: SUiso}
  The algebra   $S({\rm U}_q(\mathfrak{g}))$ is isomorphic to $ Z({\rm U}_q(\mathfrak{g}))$, with each isomorphism class $[V]\in S({\rm U}_q(\mathfrak{g}))$ assigned to the central element $C_{V}$.
\end{corollary}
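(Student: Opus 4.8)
The plan is to read off the claim directly from the triangle \eqref{eq: commdia}, whose commutativity is exactly the content of \lemref{lem: imHC}. The key observation is that two of the three maps in the triangle are already known to be isomorphisms: the horizontal map $\gamma_{-\rho}\circ\pi$ by \thmref{thm: HCiso}, and the diagonal map $\xi$ by \lemref{lem: SU}. Only the vertical map $[V]\mapsto C_V$ remains to be analysed, and commutativity will force it to be an isomorphism as well.

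Concretely, I would define $\theta:=(\gamma_{-\rho}\circ\pi)^{-1}\circ\xi\colon S({\rm U}_q(\mathfrak{g}))\to Z({\rm U}_q(\mathfrak{g}))$. Being the composite of an algebra isomorphism with the inverse of an algebra isomorphism, $\theta$ is manifestly an algebra isomorphism, so no further work is needed to establish bijectivity or multiplicativity. It then remains to check that $\theta$ is the assignment $[V]\mapsto C_V$, i.e. that $\theta([V])=C_V$ for every isomorphism class in $S({\rm U}_q(\mathfrak{g}))$.

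For this last step I would apply $\gamma_{-\rho}\circ\pi$ to both sides and use injectivity. On one hand $(\gamma_{-\rho}\circ\pi)(\theta([V]))=\xi([V])=\sum_{\mu\in\Pi(V)}m_V(\mu)K_{2\mu}$ by the definitions of $\theta$ and $\xi$. On the other hand \lemref{lem: imHC} computes $(\gamma_{-\rho}\circ\pi)(C_V)=\sum_{\mu\in\Pi(V)}m_V(\mu)K_{2\mu}$ as well; here one uses that $C_V$ is genuinely central (by \lemref{lem: Gamma} together with \propref{prop: commrel}), so that it lies in the domain of the Harish-Chandra isomorphism. Since $\gamma_{-\rho}\circ\pi$ is injective, we conclude $\theta([V])=C_V$, as required.

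There is essentially no obstacle here: all the substantive work has already been carried out in \thmref{thm: HCiso}, \lemref{lem: SU} and \lemref{lem: imHC}, and the corollary is a formal diagram chase. The one point worth emphasising is conceptual rather than technical: because $\theta$ is obtained as a composite of isomorphisms and only afterwards identified with the assignment $[V]\mapsto C_V$, the multiplicativity $C_{V\otimes W}=C_VC_W$ and additivity $C_{V\oplus W}=C_V+C_W$ of the central elements come out for free, rather than having to be verified by hand from the quasi-$R$-matrix construction of \defref{def: Gamma}.
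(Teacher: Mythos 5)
Your proposal is correct and is essentially the paper's own argument: the paper derives the corollary directly from the commutativity of diagram \eqref{eq: commdia} (i.e.\ \lemref{lem: imHC}) together with the facts that $\gamma_{-\rho}\circ\pi$ and $\xi$ are isomorphisms (\thmref{thm: HCiso} and \lemref{lem: SU}), exactly as you do. Your write-up merely makes the diagram chase explicit, which is a faithful expansion of the paper's one-line proof.
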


%Recall that ${\rm Hilb}(M^+)$ is a minimal generating set of the monoid $M^+$. 

\subsection{Proof of the main theorem}
Recall from \secref{sec: monoidalg} the monoid algebra $\mathbb{C}[M^+]$ generated by $X^{\lambda}, \lambda\in {\rm Hilb}(M^+)$. The explicit relations among these generators are given in \thmref{thm: monoalg1} and \thmref{thm: monoalg2} for the Lie algebra $\mathfrak{g}$ of type $\RN{1}$ and $\RN{2}$, respectively. In the sequel, we shall consider the monoid algebra $\mathbb{C}(q)[M^+]=\mathbb{C}(q)\otimes_{\mathbb{C}} \mathbb{C}[M^+]$, of which the generators and relations remain the same.

\begin{lemma}\label{lem: monoalgiso}
 The monoid algebra $\mathbb{C}(q)[M^+]$ is isomorphic to $S({\rm U}_q(\mathfrak{g}))$, with each generator $X^{\lambda}$ mapped to the isomorphism class $[T(\lambda)]$ for $\lambda\in {\rm Hilb}(M^+)$.
\end{lemma}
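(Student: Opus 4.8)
The plan is to build the isomorphism directly as the algebra homomorphism determined by the required values on generators, and then to prove bijectivity by a triangularity argument with respect to the dominance order. The key observation I would exploit is that the assignment $\lambda\mapsto[T(\lambda)]$ is already multiplicative before passing to the monoid algebra: since $T(\lambda)=\bigotimes_i L(\varpi_i)^{\otimes a_i}$ for $\lambda=\sum_i a_i\varpi_i$, we have $[T(\lambda)]=\prod_i[L(\varpi_i)]^{a_i}$ in the commutative Grothendieck algebra $\bar R({\rm U}_q(\mathfrak g))$, whence $[T(\lambda)]\,[T(\mu)]=[T(\lambda+\mu)]$ for all $\lambda,\mu\in M^+$ and $[T(0)]=1$. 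Thus $\tau\colon M^+\to S({\rm U}_q(\mathfrak g))$, $\lambda\mapsto[T(\lambda)]$, is a homomorphism from the additive monoid $M^+$ into the multiplicative monoid of $S({\rm U}_q(\mathfrak g))$ (and it indeed lands in $S({\rm U}_q(\mathfrak g))$ because $\Pi(T(\lambda))\subseteq M$, as already noted after the definition of $T(\lambda)$).

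First I would invoke the universal property of the monoid algebra: a monoid homomorphism such as $\tau$ extends uniquely to a $\mathbb C(q)$-algebra homomorphism
\[
\Psi\colon\mathbb C(q)[M^+]\longrightarrow S({\rm U}_q(\mathfrak g)),\qquad X^{\lambda}\mapsto[T(\lambda)]\quad(\lambda\in M^+).
\]
The point of working with $\tau$ on all of $M^+$ is that well-definedness becomes automatic: there is no need to check by hand that the images $[T(\lambda)]$, $\lambda\in{\rm Hilb}(M^+)$, satisfy the defining relations of \thmref{thm: monoalg2}, since those relations hold precisely because they are consequences of the additive relations in $M^+$ recorded in \lemref{lem: rel1} and \lemref{lem: rel2}, which $\tau$ respects.

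The next step is to show $\Psi$ is bijective by exhibiting it as unitriangular with respect to the two natural bases $\{X^{\lambda}:\lambda\in M^+\}$ of $\mathbb C(q)[M^+]$ and $\{[L(\lambda)]:\lambda\in M^+\}$ of $S({\rm U}_q(\mathfrak g))$. Since $T(\lambda)$ has highest weight $\lambda$ occurring with multiplicity one, its decomposition into simples reads
\[
[T(\lambda)]=[L(\lambda)]+\sum_{\mu<\lambda}n_{\lambda\mu}\,[L(\mu)],\qquad n_{\lambda\mu}\in\mathbb Z_{\geq0},
\]
where each $\mu$ is dominant with $\mu<\lambda$ and hence lies in $M^+$ (because $\lambda\in M^+$ and $\lambda-\mu\in Q$ force $\mu\in\tfrac12 Q\cap P=M$). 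So $\Psi$ is unitriangular for the dominance order. As only finitely many dominant weights lie below a given one, the order is well-founded below any element; a downward induction then shows every $[L(\lambda)]$ is in the image (surjectivity), while comparing leading terms shows that if $\sum_\lambda c_\lambda X^\lambda$ is nonzero with $\lambda_0$ maximal in its finite support, then its image has $[L(\lambda_0)]$-coefficient equal to $c_{\lambda_0}\neq0$ (injectivity). Hence $\Psi$ is the desired isomorphism.

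The argument is short because the real work lies upstream: commutativity of $\bar R({\rm U}_q(\mathfrak g))$ and the multiplicativity $[T(\lambda+\mu)]=[T(\lambda)][T(\mu)]$ reduce well-definedness to the universal property, and the explicit presentation of $\mathbb C(q)[M^+]$ is not even needed here. The only point demanding care is the infinite-dimensional triangularity: I would make explicit the well-foundedness of the dominance order on $M^+$ below each weight, so that both the inductive surjectivity argument and the leading-term injectivity argument are rigorous. I expect this bookkeeping, rather than any deep new input, to be the main obstacle to a clean write-up.
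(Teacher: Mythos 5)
Your proposal is correct and follows essentially the same route as the paper: both rest on the multiplicativity $[T(\lambda)][T(\mu)]=[T(\lambda+\mu)]$ and on the unitriangular decomposition $[T(\lambda)]=[L(\lambda)]+\sum_{\mu<\lambda}n_{\lambda\mu}[L(\mu)]$ with $\mu\in M^+$, together with the finiteness of dominant weights below a given one. The only difference is presentational: you build the homomorphism first via the universal property of the monoid algebra and then prove bijectivity, whereas the paper first shows $\{[T(\lambda)]\}_{\lambda\in M^+}$ is a basis and then observes the resulting linear bijection is multiplicative.
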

\begin{proof}
  Recall that as a vector space $\mathbb{C}(q)[M^+]$ has a basis $X^{\lambda}$ with $\lambda\in M^+$. If  $S({\rm U}_q(\mathfrak{g}))$ has a basis $[T(\lambda)], \lambda\in M^+$ then there exists a bijective linear map  sending $X^{\lambda}$ to $[T(\lambda)]$ for all $\lambda\in M^+$. Moreover, this linear map preserves the algebra structure since $[T(\lambda)][T(\mu)]=[T(\lambda)\otimes T(\mu)]=[T(\lambda+\mu)]$.

  Now it suffices to show that $[T(\lambda)], \lambda\in M^+$ make up a basis for $S({\rm U}_q(\mathfrak{g}))$. Note that there is the following decomposition 
  \begin{equation}\label{eq: Tla}
    T(\lambda)\cong L(\lambda) \oplus \bigoplus_{\mu<\lambda,\,  \mu\in M^+} L(\mu)^{\oplus m_{\lambda,\mu}}.
  \end{equation}
  Recall that  by definition  $[L(\lambda)], \lambda\in M^+$ form a basis for $S({\rm U}_q(\mathfrak{g}))$. For any $\gamma\in M^+$, the set $I(\gamma)=\{\mu\in M^+\mid \mu\leq \gamma \}$ is finite. Then the elements $[T(\lambda)], \lambda\in I(\gamma)$ are linearly independent, since we may arrange $I(\gamma)$ non-decreasingly under the partial order and the transformation matrix between $[T(\lambda)], \lambda\in I(\gamma)$ and $[L(\lambda)], \lambda\in I(\gamma)$ is non-singular by \eqref{eq: Tla}. Moreover, $[T(\lambda)], \lambda\in M^+$ is a spanning set of $S({\rm U}_q(\mathfrak{g}))$, since each element of $S({\rm U}_q(\mathfrak{g}))$ is a finite linear combination of $[L(\lambda)], \lambda\in M^+$ and hence a finite linear combination of $[T(\lambda)], \lambda\in M^+$ by using \eqref{eq: Tla}. Therefore, $[T(\lambda)], \lambda\in M^+$ form a basis for $S({\rm U}_q(\mathfrak{g}))$. 
\end{proof}

\begin{corollary}\label{coro: gen}
 We have the following: 
 \begin{enumerate}
%   \item The algebra   $S({\rm U}_q(\mathfrak{g}))$ is isomorphic to $ Z({\rm U}_q(\mathfrak{g}))$, with each isomorphism class $[V]\in S({\rm U}_q(\mathfrak{g}))$ assigned to the central element $C_{V}$.
  % \item   The isomorphism classes $[L(\lambda)], \lambda\in {\rm Hilb}(M^+)$ generate the algebra  $S({\rm U}_q(\mathfrak{g}))$.
   \item  The isomorphism classes $[T(\lambda)], \lambda\in {\rm Hilb}(M^+)$ generate the algebra  $S({\rm U}_q(\mathfrak{g}))$.
 %  \item The  elements $C_{L(\lambda)}, \lambda \in {\rm Hilb}(M^+)$ generate the centre $Z({\rm U}_q(\mathfrak{g}))$.
   \item The  elements $C_{T(\lambda)}, \lambda \in {\rm Hilb}(M^+)$ generate the centre $Z({\rm U}_q(\mathfrak{g}))$.
 \end{enumerate}
\end{corollary}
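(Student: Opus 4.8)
The plan is to obtain both statements essentially for free by transporting a generating set along the chain of isomorphisms already established, so that no new computation is required. The two inputs are the isomorphism $\mathbb{C}(q)[M^+]\cong S({\rm U}_q(\mathfrak{g}))$ of \lemref{lem: monoalgiso}, which sends $X^{\lambda}\mapsto [T(\lambda)]$, and the isomorphism $S({\rm U}_q(\mathfrak{g}))\cong Z({\rm U}_q(\mathfrak{g}))$ of \corref{coro: SUiso}, which sends $[V]\mapsto C_{V}$. The guiding observation is simply that an algebra isomorphism carries any algebra-generating set to an algebra-generating set of the target.

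For part (1), I would first record that $\{X^{\lambda}\mid \lambda\in {\rm Hilb}(M^+)\}$ generates $\mathbb{C}(q)[M^+]$ as an algebra: since the Hilbert basis generates the monoid $M^+$ under addition, every $\mu\in M^+$ decomposes as $\mu=\sum_j \lambda_j$ with $\lambda_j\in {\rm Hilb}(M^+)$, whence $X^{\mu}=\prod_j X^{\lambda_j}$. Pushing this generating set through the isomorphism of \lemref{lem: monoalgiso} then yields that the classes $[T(\lambda)]$, $\lambda\in {\rm Hilb}(M^+)$, generate $S({\rm U}_q(\mathfrak{g}))$, which is exactly the first assertion.

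Part (2) follows immediately by applying \corref{coro: SUiso}: the isomorphism $[V]\mapsto C_{V}$ sends the generators $[T(\lambda)]$ to $C_{T(\lambda)}$, and therefore the elements $C_{T(\lambda)}$, $\lambda\in {\rm Hilb}(M^+)$, generate $Z({\rm U}_q(\mathfrak{g}))$.

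I do not expect any genuine obstacle here, as the corollary is a purely formal consequence of the two preceding isomorphisms. The only step deserving an explicit word is that the Hilbert basis, which is by construction a generating set of the additive monoid $M^+$, is likewise a set of algebra generators of the monoid algebra; this is immediate from the multiplicative rule $X^{\lambda}X^{\mu}=X^{\lambda+\mu}$.
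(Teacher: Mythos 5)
Your proposal is correct and coincides with the paper's own argument: part (1) is obtained by transporting the generating set $\{X^{\lambda}\mid\lambda\in{\rm Hilb}(M^+)\}$ of $\mathbb{C}(q)[M^+]$ through the isomorphism of \lemref{lem: monoalgiso}, and part (2) follows by applying the isomorphism of \corref{coro: SUiso}. The extra remark that the Hilbert basis generates the monoid algebra is exactly the observation the paper records at the start of \secref{sec: monoidalg}.
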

\begin{proof}
Part (1) is a consequence of  \lemref{lem: monoalgiso}, and part (2) follows from the combination of part (1) and \corref{coro: SUiso}.
\end{proof}

%\begin{proposition}
%   The  centre $Z({\rm U}_q(\mathfrak{g}))$ is generated by  $C_{T(\lambda)}$ for all $ \lambda \in {\rm Hilb}(M^+)$.
%\end{proposition}
%\begin{proof}

%\end{proof}

%one of the types  $A_1$, $B_n(n\geq 2)$, $C_n(\geq 3)$, $D_{2k+2} (k\geq 1)$, $E_7$, $E_8$, $F_4$ and  $G_2$

We are in a position to prove our main theorem.

\begin{proof}[Proof of \thmref{thm: cen}]
 By \corref{coro: gen}, $Z({\rm U}_q(\mathfrak{g}))$ is generated by $C_{T(\lambda)}$ for all $ \lambda\in {\rm Hilb}(M^+)$. Combining \corref{coro: SUiso} and \lemref{lem: monoalgiso}, we have the algebra isomorphism $Z({\rm U}_q(\mathfrak{g}))\cong \mathbb{C}(q)[M^+]$,  with each generator $C_{T(\lambda)}$  assigned to $X^{\lambda}$ for $\lambda\in {\rm Hilb}(M^+)$. Now the theorem follows from \thmref{thm: monoalg1} and \thmref{thm: monoalg2} by replacing the ground field $\mathbb{C}$ with $\mathbb{C}(q)$.
\end{proof}

\begin{remark}
 In part (2) of \thmref{thm: cen}, one can show that $C_{L(\lambda)}, \lambda\in {\rm Hilb}(M^+)$ also form a generating set of $Z({\rm U}_q(\mathfrak{g}))$, but they do not obey the same relations. For examples of these relations, refer to \exref{exam: rel}, where $x_{\lambda}$ should be replaced with $C_{T(\lambda)}$ for $\lambda\in {\rm Hilb}(M^+)$. 
\end{remark}

\appendix

\section{Proofs of commutative relations}\label{sec: proofs}

In this appendix, we shall prove commutative relations in  \lemref{lem: Gamma} and  \propref{prop: commrel}. First we prove \lemref{lem: Gamma}.
\begin{proof}[Proof of \lemref{lem: Gamma}]
 For succinctness, we just prove that $C_V^{(k)}$ commutes with $E_i$ and the other cases can be treated similarly. Since $[\Gamma_V,\Delta(x)]=0$, then we have $[(\Gamma_V)^k,\Delta(x)]=0$ for any $x\in\rm U$.
Assuming that $(\Gamma_V)^k=\sum_{j}A_j\otimes B_j$, we have 
\[
\begin{aligned}
  0=& {\rm Tr}_1((K_i^{-1}K_{2\rho}\otimes1) [(\Gamma_V)^k, \Delta(E_i)] )\\
    =& {\rm Tr}_1(\sum_j(K_i^{-1}K_{2\rho}\otimes1)[A_j\otimes B_j, K_i\otimes E_i+ E_i\otimes 1] )\\
    =&{\rm Tr}_1( \sum_j(K_i^{-1}K_{2\rho}\otimes1)(A_jK_i\otimes B_jE_i+ A_jE_i\otimes B_j\\
    & - K_iA_j\otimes E_iB_j- E_iA_j\otimes B_j ) ).
\end{aligned}
\]
This can be written as a sum of two terms: the first term is 
\[ 
\begin{aligned}
&{\rm Tr}_1( \sum_j(K_i^{-1}K_{2\rho}\otimes1)(A_jK_i\otimes B_jE_i-K_iA_j\otimes E_iB_j) )\\
=&\sum_j{\rm Tr}(K_{2\rho}A_j)(B_jE_i- E_iB_j)\\
=&[C_V, E_i],
\end{aligned}
  \]
and the second term is 
\[
\begin{aligned}
&{\rm Tr}_1( \sum_j(K_i^{-1}K_{2\rho}\otimes1)(A_jE_i\otimes B_i-E_iA_j\otimes B_j) )\\
=&\sum_j({\rm Tr}(K_i^{-1}K_{2\rho}A_jE_i)-{\rm Tr}(K_i^{-1}K_{2\rho}E_iA_j) )B_j,
\end{aligned}
\]
which is equal to $0$ since 
$$
\begin{aligned}
&{\rm Tr}(K_i^{-1}K_{2\rho}A_jE_i)={\rm Tr}(E_iK_i^{-1}K_{2\rho}A_j)
\\
=&q^{-(2\rho-\alpha_i,\alpha_i)}{\rm Tr}(K_i^{-1}K_{2\rho}E_iA_j)={\rm Tr}(K_i^{-1}K_{2\rho}E_iA_j).
\end{aligned}
$$
Therefore, we have $[C_V^{(k)},E_i]=0$.
\end{proof}

Now we turn to prove \propref{prop: commrel}. Let us start with the following lemma. 
\begin{lemma}\label{lem: Krel}
 Let $\zeta=\zeta_V: {\rm U}_q(\mathfrak{g})\rightarrow {\rm GL}(V)$ be the linear representation associated to $V$, and let $\mathcal{K}_V$ be as defined in \eqref{eq: KV}. 
  \begin{enumerate}
    \item For $1\leq i,j\leq n$, we have 
        \begin{equation*}
          \begin{aligned}
            \mathcal{K}_V(\zeta(K_i^{\pm})\otimes K_j^{\pm 1}) &=(\zeta(K_i^{\pm})\otimes K_j^{\pm 1})\mathcal{K}_V\\
           \mathcal{K}_V(\zeta(E_i)\otimes 1)&=(\zeta(E_i)\otimes K^2_{i})\mathcal{K}_V,\\
            \mathcal{K}_V(1\otimes E_i)&=(\zeta(K_i^2)\otimes E_i)\mathcal{K}_V,\\
           \mathcal{K}_V(\zeta(F_i)\otimes 1)&=(\zeta(F_i)\otimes K_{i}^{-2})\mathcal{K}_V,\\
            \mathcal{K}_V(1\otimes F_i)&=(\zeta(K_i^{-2})\otimes F_i)\mathcal{K}_V.
          \end{aligned}   
        \end{equation*}
     \item For any $x\in {\rm U}_q(\mathfrak{g})$, we have 
     \[  \mathcal{K}_{V}\phi^2(\Delta(x))=\Delta(x)\mathcal{K}_{V},
\]
where the first tensor factors in both $\Delta(x)$ and $\phi^2(\Delta(x))$ are regarded as elements in ${\rm End}(V)$ via the linear representation $\zeta$.
  \end{enumerate}

\end{lemma}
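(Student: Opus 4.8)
The plan is to establish Part (1) by a direct computation from the definition $\mathcal{K}_V = \sum_{\eta\in\Pi(V)} P_\eta\otimes K_{2\eta}$, exploiting how $\zeta(E_i),\zeta(F_i),\zeta(K_i^{\pm 1})$ interact with the weight-space projections, and then to deduce Part (2) from Part (1) by a multiplicativity argument together with an explicit evaluation of $\phi^2$ on comultiplications.

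For Part (1), the first relation is immediate: $\zeta(K_i^{\pm 1})$ acts as a scalar on each weight space $V_\eta$, hence commutes with every $P_\eta$, while $K_j^{\pm 1}$ commutes with each $K_{2\eta}$; thus $\zeta(K_i^{\pm 1})\otimes K_j^{\pm 1}$ commutes with $\mathcal{K}_V$. For the $E_i$ relation I would use that $\zeta(E_i)$ maps $V_{\eta-\alpha_i}$ into $V_\eta$, so that $P_\eta\zeta(E_i) = \zeta(E_i)P_{\eta-\alpha_i}$. Substituting, reindexing by $\mu = \eta-\alpha_i$, and using $K_{2\eta} = K_{2\mu}K_{2\alpha_i} = K_i^2 K_{2\mu}$ gives
\[ \mathcal{K}_V(\zeta(E_i)\otimes 1) = \sum_\mu \zeta(E_i)P_\mu\otimes K_i^2 K_{2\mu} = (\zeta(E_i)\otimes K_i^2)\mathcal{K}_V. \]
The remaining three identities follow by the identical bookkeeping, the only changes being the direction of the weight shift $\eta\mapsto\eta\pm\alpha_i$ and the resulting power $K_i^{\pm 2}$.

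For Part (2) I would first reduce to generators. Since $\phi$ and $\Delta$ are algebra homomorphisms, the set of $x$ satisfying $\mathcal{K}_V\phi^2(\Delta(x)) = \Delta(x)\mathcal{K}_V$ is a subalgebra: if the relation holds for $x$ and $y$, then
\[ \mathcal{K}_V\phi^2(\Delta(xy)) = \mathcal{K}_V\phi^2(\Delta(x))\phi^2(\Delta(y)) = \Delta(x)\mathcal{K}_V\phi^2(\Delta(y)) = \Delta(x)\Delta(y)\mathcal{K}_V = \Delta(xy)\mathcal{K}_V, \]
so it suffices to check $x = K_i^{\pm 1}, E_i, F_i$. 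The key computation is to evaluate $\phi^2$ on the comultiplications; from the definition of $\phi$ one obtains $\phi^2(\Delta(K_i)) = \Delta(K_i)$, together with
\[ \phi^2(\Delta(E_i)) = K_i^{-1}\otimes E_i + E_i\otimes K_i^{-2}, \qquad \phi^2(\Delta(F_i)) = F_i\otimes K_i + K_i^2\otimes F_i. \]
Each summand is then transported across $\mathcal{K}_V$ using the identities of Part (1), together with the trivial facts that $\zeta(K_i^{\pm 1})\otimes 1$ and $1\otimes K_i^{\pm 1}$ commute with $\mathcal{K}_V$; for instance $\mathcal{K}_V(\zeta(K_i^{-1})\otimes E_i) = (\zeta(K_i)\otimes E_i)\mathcal{K}_V$ and $\mathcal{K}_V(\zeta(E_i)\otimes K_i^{-2}) = (\zeta(E_i)\otimes 1)\mathcal{K}_V$, whose sum recombines to $\Delta(E_i)\mathcal{K}_V$, and symmetrically for $F_i$.

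I expect the only genuine obstacle to be the accurate evaluation of $\phi^2(\Delta(E_i))$ and $\phi^2(\Delta(F_i))$: because $\phi$ twists the two tensor legs against each other through the $K_i^{\pm 1}$ factors, one must track the exponents carefully, and it is precisely the powers $K_i^{\pm 2}$ produced there that are absorbed by the $E_i$/$F_i$ relations of Part (1). Everything else is routine weight-space bookkeeping.
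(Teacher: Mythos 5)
Your proposal is correct and follows essentially the same route as the paper: part (1) via the intertwining relations $P_\eta\zeta(E_i)=\zeta(E_i)P_{\eta-\alpha_i}$ (and its analogues) applied termwise to $\mathcal{K}_V=\sum_\eta P_\eta\otimes K_{2\eta}$, and part (2) by reducing to the generators $K_i^{\pm1},E_i,F_i$, computing $\phi^2(\Delta(E_i))=K_i^{-1}\otimes E_i+E_i\otimes K_i^{-2}$ and $\phi^2(\Delta(F_i))=F_i\otimes K_i+K_i^2\otimes F_i$, and transporting each summand across $\mathcal{K}_V$ using part (1). The only difference is that you make explicit the reduction-to-generators step and the reindexing $\mu=\eta-\alpha_i$, both of which the paper leaves implicit.
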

\begin{proof}
  For part (1), we only prove the second equation; the others can be treated similarly. Recall that $P_{\eta}: V\rightarrow V_{\eta}$ is the linear projection onto the weight space $V_{\eta}$ of $V$. It can be verified easily that 
  \[
\begin{aligned}
   P_{\eta}\zeta(K_{i}^{\pm 1}) &= \zeta(K_i^{\pm 1}) P_{\eta},\\
   P_{\eta} \zeta(E_i)&= \zeta(E_i) P_{\eta-\alpha_i},\\
   P_{\eta} \zeta(F_i)&= \zeta(F_i) P_{\eta+ \alpha_i},
\end{aligned}
  \]
where $P_{\eta-\alpha_i}:=0$ (resp. $P_{\eta+\alpha_i}:=0$) if $\eta-\alpha_i\not\in\Pi(V)$ (resp. $\eta+\alpha_i\not\in\Pi(V)$).  Using the second equality, we have 
\begin{align*}
\mathcal{K}_V(\zeta(E_i)\otimes 1)&=\sum_{\eta\in\Pi(V)}P_{\eta}^V\zeta(E_i)\otimes K_{2\eta}\\
&=\sum_{\eta\in\Pi(V)}\zeta(E_i)P_{\eta-\alpha_i}\otimes K_{2\eta}\\
&=(\zeta_{V}(E_i)\otimes K_i^2)\mathcal{K}_V.
\end{align*}

Part (2) is a direct consequence of part (1), and we take $x=E_i$ as an example. Using $\phi^2(\Delta(E_i))=K^{-1}_i\otimes E_i+E_i\otimes K_{i}^{-2}$, we have
\begin{align*}
\mathcal{K}_V\phi^2(\Delta(E_i))&=\mathcal{K}_V(\zeta(K^{-1}_i)\otimes E_i+\zeta(E_i)\otimes K_i^{-2})\\
&=(\zeta(K_i)\otimes E_i+\zeta(E_i)\otimes 1)\mathcal{K}_V\\
&=\Delta(x)\mathcal{K}_V.
\end{align*}
This completes the proof.
\end{proof}

Now we are in a position to prove \propref{prop: commrel}.

\begin{proof}[Proof of \propref{prop: commrel}]
By \eqref{eq: quasiR} there are equations $\mathcal{R}_{V}\Delta(x)=\phi(\Delta'(x))\mathcal{R}_V$ and $\mathcal{R}^T\Delta'(x)=\phi(\Delta(x))\mathcal{R}^T$. Applying the algebra homomorphism $\phi$ to the latter, we have  $\phi(\mathcal{R}^T\Delta'(x))=\phi^2(\Delta(x))\phi(\mathcal{R}^T_V)$. Then it follows that 
 \begin{align*}
  \Gamma_V\Delta(x)&=\mathcal{K}_{V}\phi(\mathcal{R}^T_V)\mathcal{R}_{V}\Delta(x)\\
  &=\mathcal{K}_{V}\phi(\mathcal{R}^T_V)\phi(\Delta'(x))\mathcal{R}_V\\
&=\mathcal{K}_{V}\phi^2(\Delta(x))\phi(\mathcal{R}^T_V)\mathcal{R}_V\\
&=\Delta(x)\Gamma_{V},
\end{align*}
where the last equation follows from part (2) of \lemref{lem: Krel}.
\end{proof}

%\section{Dynkin diagrams and fundamental weights}

%For the readers' convenience we list the Dynkin diagram and fundamental weights for the simple Lie algebra of each type \cite{Hum72}. The fundamental weights $\varpi_i$ are expressed in  simple roots $\alpha_i$ of $\mathfrak{g}$; both $\varpi_i$ and $\alpha_i$ correspond to the node labelled by $i$ in the corresponding  Dynkin diagram. 

%\begin{enumerate}

%\item Type $A_n(n\geq 1)$. 

%\end{enumerate}

\end{document}